 \newtheorem{Theorem}{Theorem}[section]
 \newtheorem{Corollary}[Theorem]{Corollary}
 \newtheorem{Lemma}[Theorem]{Lemma}
 \newtheorem{Definition}[Theorem]{Definition}
\newtheorem{Question}[Theorem]{Question}
 \newtheorem{Remark}[Theorem]{Remark}
 \numberwithin{equation}{section}
\begin{document}

\title[Fiberwise Bergman kernels, vector bundles, and log-subharmonicity]
{Fiberwise Bergman kernels, vector bundles, and log-subharmonicity}
\author{Shijie Bao}
\address{Shijie Bao: Institute of Mathematics, Academy of Mathematics
and Systems Science, Chinese Academy of Sciences, Beijing 100190, China.}
\email{bsjie@amss.ac.cn}

\author{Qi'an Guan}
\address{Qi'an Guan: School of
Mathematical Sciences, Peking University, Beijing 100871, China.}
\email{guanqian@math.pku.edu.cn}

\subjclass[2020]{32A36 32D15 32L15 32Q28 32W05}

\thanks{}

\keywords{Bergman kernel, optimal $L^2$ extension, strong openness property}

\date{\today}

\dedicatory{}

\commby{}

\maketitle
\begin{abstract}
In this article, we consider Bergman kernels related to modules at boundary points for singular hermitian metrics on holomorphic vector bundles, and obtain a log-subharmonicity property of the Bergman kernels. As applications, we obtain a lower estimate of weighted $L^2$ integrals on sublevel sets of plurisubharmonic functions, and reprove an effectiveness result of the strong openness property of the modules.
\end{abstract}

\section{Introduction}

It is well-known that the strong openness property of multiplier ideal sheaves (see e.g. \cite{Tian,Nadel,Siu96,DEL,DK01,DemaillySoc,DP03,Lazarsfeld,Siu05,Siu09,DemaillyAG,Guenancia}) has a great influence in the study of several complex variables, complex geometry and complex algebraic geometry
(see e.g. \cite{GZSOC,K16,cao17,cdM17,FoW18,DEL18,ZZ2018,GZ20,berndtsson20,ZZ2019,ZhouZhu20siu's,FoW20,KS20,DEL21}).

Demailly \cite{DemaillySoc,DemaillyAG} conjectured the strong openness property and Guan-Zhou \cite{GZSOC} gave the proof (Jonsson-Musta\c{t}\u{a} \cite{JM12} proved the 2-dimensional case). In order to prove the strong openness property, Jonsson and Musta\c{t}\u{a} (see \cite{JM13}, see also \cite{JM12}) posed the following conjecture, which played an important role in their proof of 2-dimensional strong openness property:

\textbf{Conjecture J-M}: If $c_o^F(\psi)<+\infty$, $\frac{1}{r^2}\mu(\{c_o^F(\psi)\psi-\log|F|<\log r\})$ has a uniform positive lower bound independent of $r\in(0,1)$, where $c_o^{F}(\psi):=\sup\{c\geq 0 : |F|^2e^{-2c\psi}$ is locally $L^1$ near $o \}$, and $\mu$ is the Lebesgue measure.

Guan-Zhou \cite{GZeff} proved Conjecture J-M by using the strong openness property.

Bao-Guan-Yuan \cite{BGY} (see also \cite{GMY-BC2} by Guan-Mi-Yuan) gave an approach to Conjecture J-M independent of the strong
openness property by establishing a concavity property of the minimal $L^{2}$ integrals
with respect to a module at a boundary point of the sub-level sets, and obtained a sharp effectiveness result of Conjecture J-M meanwhile. 

In \cite{BG-BB} (see also \cite{BG-SM}), we considered Bergman kernels related to modules at boundary points of the sub-level sets, and obtained the log-subharmonicity property of the Bergman kernels. We applied the log-subharmonicity to get a lower estimate of weighted $L^2$ integrals on sublevel sets, and reproved the effectiveness result of strong openness property of modules at boundary points.

Recently, for singular hermitian metrics on holomorphic vector bundles, Guan-Mi-Yuan (\cite{GMY5}) established a concavity property of minimal $L^2$ integrals on sublevel sets of plurisubharmonic functions related to modules at boundary points of the sublevel sets, inducing the strong openness property and its effectiveness result of the modules.

It is natural to ask: 

\begin{Question}\label{Q}
Is there an approach from optimal $L^2$ extension theorem to the strong openness property and its effectiveness result related to modules at boundary points for singular hermitian metrics on holomorphic vector bundles?
\end{Question}

In this article, we give an affirmative answer to Question \ref{Q}.

We recall some definitions. Let $M$ be an $n-$dimensional complex manifold. Let $E$ be a rank $r$ holomorphic vector bundle over $M$ and $\overline{E}$ be the conjugate of $E$, $E^*$ be the dual bundle of $E$. Recall that a section $h$ of the vector bundle $E^*\otimes \overline{E}^*$ with measurable coefficients, such that $h$ is an almost everywhere positive definite hermitian form on $E$, is a measurable metric on $E$. And recall that we call a measurable metric $\hat{h}$ on $E$ has a positive locally lower bound if for any compact subset $K$ of $M$, there exists a constant $C_K>0$ such that $\hat{h}>C_K h_1$ on $K$, where $h_1$ is a smooth metric on $E$.

Then we recall the following definition of singular hermitian metrics on vector bundles. 
\begin{Definition}[see \cite{GMY5}]
	Let $M$, $E$ and $h$ be as above and $\Sigma\subset M$ be a closed set of measure zero. Let $\{M_j\}_{j=1}^{+\infty}$ be a sequence of relatively compact subsets of $M$ such that $M_1\subset\subset M_2\subset\subset\ldots\subset\subset M_j\subset\subset M_{j+1}\subset\subset\ldots$ and $\bigcup_{j=1}^{+\infty}M_j=M$. Assume that for each $M_j$, there exists a sequence of hermitian metrics $\{h_{j,s}\}_{s=1}^{+\infty}$ on $M_j$ of class $C^2$ such that $\lim_{s\rightarrow+\infty}h_{j,s}=h$ point-wisely on $M_j\setminus\Sigma$. Then the collection of data $(M,E,\Sigma,M_j,h,h_{j,s})$ is called a singular hermitian metric on $E$.
\end{Definition}

Next we recall the following singular version of Nakano positivity. Let $ D$ be a hermitian metric on $M$, $\theta$ be a hermitian form on $TM$ with continuous coefficients, and $(M,E,\Sigma,M_j,h,h_{j,s})$ be a singular hermitian metric on $E$.
\begin{Definition}[see \cite{GMY5}]\label{singular nak}
	We write:
	\[\Theta_h(E)\geq^s_{Nak}\theta\otimes Id_E\]
	if the following requirements are met.
	
	For each $M_j$, there exists a sequence of continuous functions $\lambda_{j,s}$ on $\overline{M_j}$ and a continuous function $\lambda_j$ on $\overline{M_j}$ subject to the following requirements:
	
	(1) for any $x\in M_j$: $|e_x|_{h_{j,s}}\leq|e_x|_{h_{j,s+1}}$ for any $s\in\mathbb{N}$ and any $e_x\in E_x$;
	
	(2) $\Theta_{h_{j,s}}(E)\geq_{Nak}\theta-\lambda_{j,s} \omega\otimes Id_E$ on $M_j$;
	
	(3) $\lambda_{j,s}\rightarrow 0$ a.e. on $M_j$;
	
	(4) $0\leq\lambda_{j,s}\leq\lambda_j$ on $M_j$ for any $s$.
	
\end{Definition}

\subsection{Main result}\label{main}
\
 Let $M$ be an $n-$dimensional Stein manifold. Let $K_M$ be the canonical line bundle on $M$. Let $dV_M$ be a continuous volume form on $M$. Let $\psi$ be a plurisubharmonic function on $M$. Let $F\not\equiv 0$ be a holomorphic function on $M$, and let $T\in [-\infty,+\infty)$. Denote that
\[\Psi:=\min\{\psi-2\log |F|,-T\}.\]
If $F(z)=0$ for some $z\in M$, set $\Psi(z)=-T$. Let $E$ be a holomorphic vector bundle on $M$ with rank $r$.

Let $(V,z)$ be a local coordinate near a point $z_0$ of $M$ and $E|_V$ is trivial. Then for any $g\in H^0(V,\mathcal{O}(K_M\otimes E))$, there exists a holomorphic $(n,0)$ form $\hat{g}$ on $V$ such that $g=\hat{g}\otimes e$ locally, where $e$ is a local section of $E$ on $V$. Denote that $|g|_{h_0}^2|_V:=\sqrt{-1}^{n^2}g\wedge \bar{g}\langle e,e\rangle_{h_0}$, where $h_0$ is any (smooth or singular) metric on $E$. It can be checked that $|g|_{h_0}^2|_V$ is invariant under the coordinate change and $|g|_{h_0}^2$ is a globally defined $(n,n)$ form on $V$.

Note that for any $t\geq T$, $M_t=\{\psi+2\log|1/F|<-t\}$ on $M\setminus\{F=0\}$. Hence $M_t$ is a Stein submanifold of $M$ for any $t\geq T$ (see \cite{FN80}) , and $\Psi=\psi+2\log|1/F|$ is a plurisubharmonic function on $M_t$.

Let $\hat{h}$ be a smooth metric on $E$. Let $h$ be a measurable metric on $E$ satisfying that $h$ has a positive locally lower bound. Assume $(M,E,\Sigma, M_j, h,h_{j,s})$ is a singular metric on $E$, and $\Theta_{h}(E)\geq^s_{Nak} 0$. 

For any $t\geq T$, denote that
\[A^2(M_t,h):=\{f\in H^0(M_t,\mathcal{O}(K_M\otimes E)) : \int_{M_t}|f|_h^2<+\infty\}.\]
For any $t\in [T,+\infty)$ and $\lambda>0$, denote that
\[\Psi_{\lambda,t}:=\lambda\max\{\Psi+t,0\}.\]
And for any $f\in A^2(M_T,h)$, denote that
\[\|f\|_{\lambda,t}:=\left(\int_{M_T}|f|_h^2e^{-\Psi_{\lambda,t}}\right)^{1/2}.\]
Note that
\[\|f\|_T^2:=\|f\|^2_{\lambda,T}=\int_{M_T}|f|_h^2\]
for any $\lambda>0$, and
\[e^{\lambda(T-t)}\|f\|_T^2\leq\|f\|^2_{\lambda,t}\leq\|f\|_T^2<+\infty\]
for any $t\geq T$.

We will state that $A^2(M_T,h)$ is a Hilbert space in Section 2. Denote the dual space of $A^2(M_T,h)$ by $A^2(M_T,h)^*$. For any $\xi\in A^2(M_T,h)^*$, denote that the Bergman kernel with respect to $\xi$ is
\[K^{h}_{\xi,\Psi,\lambda}(t):=\sup_{f\in A^2(M_T,h)}\frac{|\xi\cdot f|^2}{\|f\|^2_{\lambda,t}}\]
for any $t\in [T,+\infty)$, where $K^{h}_{\xi,\Psi,\lambda}(t)=0$ if $A^2(M_T,h)=\{0\}$.

Denote $U_T:=(T,+\infty)+\sqrt{-1}\mathbb{R}:=\{w\in\mathbb{C} : \text{Re\ }w>T\}\subset\mathbb{C}$. We obtain the following log-subharmonicity property of the Bergman kernel $K^{h}_{\xi,\Psi,\lambda}$.
\begin{Theorem}\label{concavity}
	Assume that $A^2(M_T,h)\neq \{0\}$. Then $\log K^{h}_{\xi,\Psi,\lambda}(\text{Re\ }w)$ is subharmonic with respect to $w\in U_T$.
\end{Theorem}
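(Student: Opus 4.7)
\emph{Paragraph 1 (Reformulation).} My plan is to reduce the subharmonicity statement to a one-variable convexity statement and then prove it by adapting the scalar optimal $L^2$ extension argument of Bao--Guan \cite{BG-BB} to the vector-bundle setting developed by Guan--Mi--Yuan \cite{GMY5}. First, since $K^h_{\xi,\Psi,\lambda}(\text{Re}\,w)$ depends only on $t := \text{Re}\,w$, the Laplacian in $w$ equals $\partial_t^2$, so subharmonicity of $\log K^h_{\xi,\Psi,\lambda}(\text{Re}\,w)$ on $U_T$ is equivalent to convexity of $t \mapsto \log K^h_{\xi,\Psi,\lambda}(t)$ on $(T,+\infty)$. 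Equivalently, setting $G(t) := 1/K^h_{\xi,\Psi,\lambda}(t) = \inf\{\|f\|^2_{\lambda,t} : f \in A^2(M_T,h),\,\xi\cdot f = 1\}$ (trivial if $\xi = 0$), the goal becomes log-concavity of the minimal $L^2$ integral $G$. The key algebraic point to record is that $\Psi_{\lambda,t}(z) = \lambda\max\{\Psi(z) + t, 0\}$ is convex in $t$ for each fixed $z$, since $\max\{\cdot,0\}$ is a convex function of the linear expression $\Psi(z) + t$.

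\emph{Paragraph 2 (Regularization).} The singular metric $h$ is handled by approximation through the smooth metrics $h_{j,s}$ of Definition~\ref{singular nak}. Because $|e|_{h_{j,s}} \leq |e|_{h_{j,s+1}}$ and $h_{j,s} \to h$ pointwise off $\Sigma$, the weighted $L^2$ norms $\|f\|_{\lambda,t}$ computed with $h_{j,s}$ increase monotonically to those computed with $h$, so the Bergman kernels $K^{h_{j,s}}_{\xi,\Psi,\lambda}(t)$ decrease monotonically to $K^h_{\xi,\Psi,\lambda}(t)$ for each $t$. Since convexity of $\log K$ is preserved under such monotone pointwise limits, it suffices to prove the theorem when $h$ is smooth and genuinely Nakano semipositive (the small error $\lambda_{j,s}\omega\otimes\text{Id}_E$ from Definition~\ref{singular nak} is absorbed by an auxiliary $\varepsilon$-perturbation of the metric), and exhausting $M$ by relatively compact Stein subdomains $M_j$ reduces the problem to a bounded-geometry setting.

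\emph{Paragraph 3 (Core argument and main obstacle).} In this smooth, Nakano-semipositive setting, log-convexity of $K^h_{\xi,\Psi,\lambda}(t)$ is equivalent to the inequality
\[
K^h_{\xi,\Psi,\lambda}(t) \leq K^h_{\xi,\Psi,\lambda}(t_1)^{1-\alpha}\,K^h_{\xi,\Psi,\lambda}(t_2)^\alpha
\]
for $T \leq t_1 < t_2$ and $t = (1-\alpha)t_1 + \alpha t_2$. I would establish this by the vector-bundle optimal $L^2$ extension theorem of \cite{GMY5} applied across the level hypersurface $\{\Psi = -t\}$: starting from a near-extremal $f$ realizing $K^h_{\xi,\Psi,\lambda}(t)$, the extension estimate, combined with the convexity of $\Psi_{\lambda,\cdot}$ in $t$, produces competitors whose norms at $t_1$ and $t_2$ multiply (by H\"older) to the right-hand side, so that taking supremum over $f$ delivers the product bound on $K^h_{\xi,\Psi,\lambda}(t)$. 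The main technical obstacles are: (a) the weight $\Psi_{\lambda,t}$ is only Lipschitz, requiring a smoothing of $\max\{\cdot,0\}$ by a convex $C^\infty$ approximation and a limit at the end; and (b) the optimal $L^2$ extension procedure must be compatible with the general continuous linear constraint $\xi\cdot f = 1$ (rather than the classical ``prescribed value on a divisor'' constraint), which is in fact automatic because $\xi$ is a fixed functional on $A^2(M_T,h)$ independent of $t$, applied after the extension.
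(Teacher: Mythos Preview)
Your reduction in Paragraph~1 to convexity of $t\mapsto\log K^h_{\xi,\Psi,\lambda}(t)$ is fine; subharmonicity of a function of $\text{Re}\,w$ on a vertical strip is indeed equivalent to one-variable convexity. The regularization in Paragraph~2 is more delicate than you indicate: since $\|\cdot\|_{h_{j,s}}\le\|\cdot\|_h$, the functional $\xi\in A^2(M_T,h)^*$ need not be bounded for the weaker $h_{j,s}$-norms, so $K^{h_{j,s}}_{\xi,\Psi,\lambda}$ may not even be defined, and the claimed monotone convergence of kernels is not justified. The paper avoids this by pushing the approximation inside the proof of the optimal $L^2$ extension lemma (its Lemma~\ref{L2ext}) rather than at the level of Bergman kernels.

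The serious gap is Paragraph~3. Your description ``optimal $L^2$ extension applied across the level hypersurface $\{\Psi=-t\}$, then convexity of $\Psi_{\lambda,\cdot}$ in $t$ plus H\"older'' does not assemble into a proof. First, the naive route fails: convexity of $t\mapsto\Psi_{\lambda,t}(z)$ gives $\|f\|^2_{\lambda,t}\ge\int|f|^2_h e^{-(1-\alpha)\Psi_{\lambda,t_1}-\alpha\Psi_{\lambda,t_2}}$, and H\"older bounds this integral \emph{above} by $(\|f\|^2_{\lambda,t_1})^{1-\alpha}(\|f\|^2_{\lambda,t_2})^{\alpha}$, so the two inequalities do not chain. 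Indeed $t\mapsto\log\|f\|^2_{\lambda,t}$ is not concave for fixed $f$ in general (take $|f|^2_h$ a two-point mass and compute), so ``sup of convex'' cannot be invoked termwise. Second, extension from the real hypersurface $\{\Psi=-t\}\subset M_T$ (or from the sublevel set $M_t$) is the tool the paper uses for the \emph{monotonicity} part of Theorem~\ref{increasing} via Lemma~\ref{L2mthod}; it does not by itself yield the product inequality you need.

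What the paper actually does for Theorem~\ref{concavity} is a Berndtsson-type fibration argument: complexify $t$ to $w$, form $\Omega'=M_T\times\Delta(w_0,r)\subset M_T\times U_T$, and observe that $\tilde\Psi(z,w):=\lambda\max\{\Psi(z)+\text{Re}\,w,0\}$ is plurisubharmonic on the product (not merely convex in $t$). One then applies optimal $L^2$ extension (Lemma~\ref{L2ext}) from the fiber $M_T\times\{w_0\}$ to $\Omega'$, obtaining a holomorphic family $\tilde f_w$ with controlled weighted $L^2$ norm. The step you are missing entirely is Lemma~\ref{xihol}: $w\mapsto\xi\cdot\tilde f_w$ is \emph{holomorphic}, so $\log|\xi\cdot\tilde f_w|^2$ is subharmonic; combining this with Jensen's inequality and the extension estimate gives the sub-mean value inequality for $\log K(w)$ over $\Delta(w_0,r)$. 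Upper semicontinuity (Lemma~\ref{upper-semi}) then finishes. Your sketch has no analogue of this holomorphic-in-$w$ evaluation, and ``$\xi$ applied after the extension'' in your obstacle~(b) is too vague to supply it.
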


When $F\equiv 1$, we have $\Psi\equiv\psi$ on $\{\psi<-T\}$, and Theorem \ref{concavity} induces the following corollary related to fiberwise Bergman kernels with respect to plurisubharmonic functions.
\begin{Corollary}
	Assume that $K^{h}_{\xi,\psi,\lambda}(t_0)\in (0,+\infty)$ for some $t_0\geq T$. Then $\log K^{h}_{\xi,\psi,\lambda}(\text{Re\ }w)$ is subharmonic with respect to $w\in U_T$.
\end{Corollary}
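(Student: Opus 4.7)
The Corollary is a direct specialization of Theorem \ref{concavity} to the degenerate case $F \equiv 1$. My plan is to (i) unpack the definitions and verify that the weight and the sublevel sets built from $\Psi$ reduce to the corresponding objects built from $\psi$ on the relevant domain, (ii) use the positivity hypothesis to check the nontriviality assumption of Theorem \ref{concavity}, and (iii) quote that theorem to conclude.

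For step (i), note that when $F \equiv 1$ we have $\Psi = \min\{\psi, -T\}$, so for any $t \geq T$ the inequality $\min\{\psi, -T\} < -t$ combined with $-t \leq -T$ forces $\psi < -t$; hence $M_t = \{\Psi < -t\} = \{\psi < -t\}$. In particular $\Psi \equiv \psi$ pointwise on $M_T$, and therefore the weight $\Psi_{\lambda,t} = \lambda \max\{\Psi + t, 0\}$ agrees with $\lambda \max\{\psi + t, 0\}$ on the domain of integration. Consequently the norms $\|\cdot\|_{\lambda,t}$ built from $\Psi$ coincide with those built from $\psi$ on $A^2(M_T, h)$, so $K^{h}_{\xi,\Psi,\lambda}(t) = K^{h}_{\xi,\psi,\lambda}(t)$ for every $t \in [T, +\infty)$.

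For step (ii), the hypothesis $K^{h}_{\xi,\psi,\lambda}(t_0) > 0$ together with the definition of the supremum produces some $f \in A^2(M_T, h)$ with $\xi \cdot f \neq 0$, hence $f \not\equiv 0$ and $A^2(M_T, h) \neq \{0\}$. Theorem \ref{concavity} then yields that $\log K^{h}_{\xi,\Psi,\lambda}(\operatorname{Re} w)$ is subharmonic on $U_T$, and by the identification established in the previous paragraph this is exactly $\log K^{h}_{\xi,\psi,\lambda}(\operatorname{Re} w)$.

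There is no substantive obstacle in this argument; the only piece of bookkeeping that deserves attention is the verification that the weights collapse correctly on $M_T$ once $F \equiv 1$. The finiteness half of the hypothesis, $K^{h}_{\xi,\psi,\lambda}(t_0) < +\infty$, plays no essential role in the proof — it merely ensures the logarithm is a real number at the chosen point, and in fact is automatic from the Riesz representation of the continuous functional $\xi$ combined with the elementary bound $\|f\|_{\lambda,t_0}^2 \geq e^{\lambda(T - t_0)} \|f\|_T^2$.
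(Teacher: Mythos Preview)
Your proposal is correct and follows exactly the route the paper indicates: specialize to $F\equiv 1$, observe that $\Psi=\psi$ on $M_T=\{\psi<-T\}$ so that the kernels coincide, use the positivity assumption $K^{h}_{\xi,\psi,\lambda}(t_0)>0$ to guarantee $A^2(M_T,h)\neq\{0\}$, and invoke Theorem \ref{concavity}. The paper itself gives no separate proof beyond the one-line remark preceding the corollary, so your write-up is simply a careful unpacking of that remark.
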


We recall some notations in \cite{GMY5}. Let $z_0$ be a point in $M$. Denote that
\[\tilde{J}(E,\Psi)_{z_0}:=\{f\in H^0(\{\Psi<-t\}\cap V,\mathcal{O}(E)) : t\in\mathbb{R},\ V \text{\ is \ a \ neighborhood \ of \ } z_0\},\]
and
\[J(E,\Psi)_{z_0}:=\tilde{J}(E,\Psi)_{z_0}/\sim,\]
where the equivalence relation `$\sim$' is as follows:
\[f \sim g \ \Leftrightarrow \ f=g \text{\ on \ } \{\Psi<-t\}\cap V, \text{\ where\ }  t\gg T, V \text{\ is\ a\ neighborhood\ of\ } z_0.\]
For any $f\in \tilde{J}(E,\Psi)_{z_0}$, denote the equivalence class of $f$ in $J(E,\Psi)_{z_0}$ by $f_{z_0}$. And for any $f_{z_0},g_{z_0}\in J(E,\Psi)_{z_0}$, and $(q,z_0)\in\mathcal{O}_{M,z_0}$, define
\[f_{z_0}+g_{z_0}:=(f+g)_{z_0},\ (q,z_0)\cdot f_{z_0}:=(qf)_{z_0}.\]
It is clear that $J(E,\Psi)_{z_0}$ is an $\mathcal{O}_{M,z_0}-$module.

For any $a\geq 0$, denote that $I(h,a\Psi)_{z_0}:=\big\{f_{z_0}\in J(E,\Psi)_{z_0} : \exists t\gg T, V$ is a neighborhood of $z_0,\ \text{s.t.\ } \int_{\{\Psi<-t\}\cap V}|f|_h^2e^{-a\Psi}dV_M<+\infty\big\}$, where $dV_M$ is a continuous volume form on $M$. Then it is clear that $I(h,a\Psi)_{z_0}$ is an $\mathcal{O}_{M,z_0}-$submodule of $J(E,\Psi)_{z_0}$. Especially, we denote that $I_{z_0}:=I(\hat{h},0\Psi)_{z_0}$, where $\hat{h}$ is the smooth metric on $E$. If $z_0\in\bigcap_{t>T}\overline{\{\Psi<-t\}}$, then $I_{z_0}=\mathcal{O}(E)_{z_0}$.

Let $Z_0$ be a subset of $\bigcap_{t>T}\overline{\{\Psi<-t\}}$. Let $J_{z_0}$ be an $\mathcal{O}_{M,z_0}-$submodule of $J(E,\Psi)_{z_0}$ for any $z_0\in Z_0$. For any $t\geq T$, denote that
\[A^2(M_t,h)\cap J:=\left\{f\in A^2(M_t,h) :  f_{z_0}\in\mathcal{O}(K_M)_{z_0}\otimes J_{z_0}, \text{for\ any\ } z_0\in Z_0\right\}.\]
Assume that $A^2(M_T,h)\cap J$ is a proper subspace of $A^2(M_T,h)$. Using Theorem \ref{concavity}, we obtain the following concavity and monotonicity property related to $K^{h}_{\xi,\Psi,\lambda}$.

\begin{Theorem}\label{increasing}
	Assume that $A^2(M_T,h)\neq\{0\}$, $I(h,\Psi)_{z_0}\subset J_{z_0}$ for any $z_0\in Z_0$, and $\xi\in A^2(M_T,h)^*$ such that $\xi|_{A^2(M_T,h)\cap J}\equiv 0$. Then $-\log K^{h}_{\xi,\Psi,\lambda}(t)+t$ is concave and increasing with respect to $t\in [T,+\infty)$.
\end{Theorem}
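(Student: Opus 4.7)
The plan is to handle the two conclusions separately. Concavity is inherited directly from Theorem~\ref{concavity}, while monotonicity requires an optimal $L^2$ extension step that uses the module hypotheses on $J$ and $\xi$ in an essential way.

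\emph{Concavity.} A function of $w\in U_T$ that depends only on $\text{Re\ }w$ is subharmonic in $w$ if and only if it is convex in $\text{Re\ }w$. Applied to $\log K^{h}_{\xi,\Psi,\lambda}(\text{Re\ }w)$, Theorem~\ref{concavity} is therefore equivalent to convexity of $t\mapsto\log K^{h}_{\xi,\Psi,\lambda}(t)$ on $[T,+\infty)$, and so $-\log K^{h}_{\xi,\Psi,\lambda}(t)+t$ is concave as the sum of a concave function and a linear function.

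\emph{Monotonicity.} I would prove $K^{h}_{\xi,\Psi,\lambda}(t_1)\geq e^{-(t_2-t_1)}K^{h}_{\xi,\Psi,\lambda}(t_2)$ for $T\leq t_1\leq t_2$. Given $\epsilon>0$, pick $f_2\in A^2(M_T,h)$ with $|\xi\cdot f_2|^2\geq(K^{h}_{\xi,\Psi,\lambda}(t_2)-\epsilon)\|f_2\|^2_{\lambda,t_2}$. The key step is to apply an optimal $L^2$ extension theorem in the Nakano semi-positive singular vector bundle setting (in the spirit of Guan-Mi-Yuan~\cite{GMY5}) to extend $f_2|_{M_{t_2}}$ to a section $\tilde f\in A^2(M_T,h)$ with
\[\tilde f=f_2\ \text{on}\ M_{t_2}\qquad\text{and}\qquad \|\tilde f\|^2_{\lambda,t_1}\leq e^{t_2-t_1}\|f_2\|^2_{\lambda,t_2}.\]
For any $z_0\in Z_0\subset\bigcap_{t>T}\overline{M_t}$, the first condition forces the germ $(\tilde f-f_2)_{z_0}$ in $\mathcal O(K_M)_{z_0}\otimes J(E,\Psi)_{z_0}$ to vanish (take $t=t_2$ and any neighborhood $V$ of $z_0$ in the equivalence relation defining $J(E,\Psi)_{z_0}$), so $\tilde f-f_2\in A^2(M_T,h)\cap J$ irrespective of the particular submodules $J_{z_0}$. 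The assumption $\xi|_{A^2(M_T,h)\cap J}\equiv 0$ then yields $\xi\cdot\tilde f=\xi\cdot f_2$, and combining with the $L^2$ bound gives
\[K^{h}_{\xi,\Psi,\lambda}(t_1)\geq\frac{|\xi\cdot\tilde f|^2}{\|\tilde f\|^2_{\lambda,t_1}}\geq\frac{|\xi\cdot f_2|^2}{e^{t_2-t_1}\|f_2\|^2_{\lambda,t_2}}\geq\frac{K^{h}_{\xi,\Psi,\lambda}(t_2)-\epsilon}{e^{t_2-t_1}}.\]
Sending $\epsilon\to 0$ closes the argument.

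\emph{Main obstacle.} The technical heart is producing the extension $\tilde f$ with the sharp constant $e^{t_2-t_1}$ when $h$ is merely a singular Nakano semi-positive metric. This is where $\Theta_h(E)\geq^s_{Nak}0$ and Definition~\ref{singular nak} enter in full force: one runs the Ohsawa--Takegoshi--Guan-Zhou extension machinery on each smooth approximant $h_{j,s}$ with a plurisubharmonic weight assembled from $\Psi$ and $\max\{\Psi+t_1,0\}$ (chosen so that the weighted $L^2$ integral on $M_T$ reproduces $\|\cdot\|^2_{\lambda,t_1}$ up to the factor $e^{t_2-t_1}$), and then passes to the limit in $s$ and $j$ using the monotonicity $|e_x|_{h_{j,s}}\nearrow|e_x|_h$ and the decay $\lambda_{j,s}\to 0$. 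The containment $I(h,\Psi)_{z_0}\subset J_{z_0}$ acts as a safety net: if the extension is only known to agree with $f_2$ near $Z_0$ modulo $I(h,\Psi)$ rather than literally on all of $M_{t_2}$, the difference still lies in $A^2(M_T,h)\cap J$ and the argument goes through unchanged.
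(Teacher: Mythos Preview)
Your concavity argument matches the paper's. For monotonicity, however, the paper takes a different and more economical route than the one you propose.

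The paper does \emph{not} prove the sharp inequality $K^{h}_{\xi,\Psi,\lambda}(t_1)\geq e^{-(t_2-t_1)}K^{h}_{\xi,\Psi,\lambda}(t_2)$ directly. Instead it observes that a concave function on $[T,+\infty)$ which is bounded below must be nondecreasing, and then establishes only a uniform lower bound for $-\log K^{h}_{\xi,\Psi,\lambda}(t)+t$. For this the non-optimal $\bar\partial$-estimate of Lemma~\ref{L2mthod} (with an unspecified constant $C$) is enough: one takes the maximizer $f_t$ from Lemma~\ref{sup=max}, applies Lemma~\ref{L2mthod} to produce $\tilde F_t$ on $M_T$ with $(\tilde F_t-f_t)_{z_0}\in\mathcal O(K_M)_{z_0}\otimes I(h,\Psi)_{z_0}\subset\mathcal O(K_M)_{z_0}\otimes J_{z_0}$ (hence $\xi\cdot\tilde F_t=\xi\cdot f_t$), obtains $\int_{M_T}|\tilde F_t|^2_h\leq C_1 e^{t}/K^{h}_{\xi,\Psi,\lambda}(t)$, and compares with $\int_{M_T}|\tilde F_t|^2_h\geq 1/K^{h}_{\xi,\Psi,\lambda}(T)$.

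Two remarks on your route. First, $L^2$ extension from a sublevel set does not yield literal equality $\tilde f=f_2$ on $M_{t_2}$; it only gives $(\tilde f-f_2)_{z_0}\in\mathcal O(K_M)_{z_0}\otimes I(h,\Psi)_{z_0}$ at boundary points, so your ``safety net'' paragraph is the correct primary argument rather than a fallback, and the hypothesis $I(h,\Psi)_{z_0}\subset J_{z_0}$ is genuinely used (not ``irrespective of the particular submodules''). Second, the sharp-constant extension from $M_{t_2}$ to $M_T$ that you invoke is essentially equivalent to the concavity of minimal $L^2$ integrals proved in~\cite{GMY5}---exactly the result to which this paper is offering an alternative approach. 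So while your argument is valid in principle, it imports the heavier GMY5 machinery; the paper's point is that the product-space optimal extension (Lemma~\ref{L2ext}) together with a crude $\bar\partial$-estimate already suffice, and the sharpness enters only through Theorem~\ref{concavity}.
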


Let $F\equiv 1$, and let $Z_0\subset\{\psi=-\infty\}$. Additionally, we let modules $I(h,\psi)_{z_0}$ and $J_{z_0}$ be ideals of  $\mathcal{O}_{M,z_0}$ for any $z_0\in Z_0$. Then Theorem \ref{increasing} induces the following corollary related to Bergman kernels with respect to interior points.

\begin{Corollary}
	Assume that $A^2(M_T,h)\neq\{0\}$, $I(h,\psi)_{z_0}\subset J_{z_0}$ for any $z_0\in Z_0$, and $\xi\in A^2(M_T,h)^*$ such that $\xi|_{A^2(M_T,h)\cap J}\equiv 0$. Then $-\log K^{h}_{\xi,\psi,\lambda}(t)+t$ is concave and increasing with respect to $t\in [T,+\infty)$.
\end{Corollary}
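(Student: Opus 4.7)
The plan is to obtain this corollary as a direct specialization of Theorem~\ref{increasing} by taking $F \equiv 1$. With this choice,
\[\Psi := \min\{\psi - 2\log|F|, -T\} = \min\{\psi, -T\},\]
so $\Psi = \psi$ on $M_T = \{\psi < -T\}$. The first step is to verify the geometric hypothesis $Z_0 \subset \bigcap_{t > T} \overline{\{\Psi < -t\}}$: since every $z_0 \in Z_0$ satisfies $\psi(z_0) = -\infty$, we have $z_0 \in \{\psi < -t\} = \{\Psi < -t\}$ for every $t > T$, so the containment is immediate.

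Next I would translate the module hypothesis. For $z_0 \in Z_0$ and any neighborhood $V$ of $z_0$, the sets $\{\Psi < -t\} \cap V$ and $\{\psi < -t\} \cap V$ coincide for each $t > T$, so $\tilde{J}(E,\Psi)_{z_0} = \tilde{J}(E,\psi)_{z_0}$ and the stipulation that $J_{z_0}$ be identified with an ideal of $\mathcal{O}_{M,z_0}$ is preserved under this identification. In particular $I(h,\Psi)_{z_0} = I(h,\psi)_{z_0}$, so the containment $I(h,\psi)_{z_0} \subset J_{z_0}$ assumed in the corollary produces the containment $I(h,\Psi)_{z_0} \subset J_{z_0}$ required by Theorem~\ref{increasing}.

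Finally I would compare the weighted norms and the Bergman kernels in the two descriptions. Because $\Psi = \psi$ pointwise on $M_T$, we have $\Psi_{\lambda,t} = \lambda \max\{\psi + t, 0\}$ on $M_T$; hence $\|f\|_{\lambda,t}$ is unchanged, and consequently $K^{h}_{\xi,\Psi,\lambda}(t) = K^{h}_{\xi,\psi,\lambda}(t)$ for every $t \geq T$. The subspace $A^2(M_T,h) \cap J$ is also the same in both settings, so the functional $\xi$ annihilating it is common to both. Applying Theorem~\ref{increasing} then yields that $-\log K^{h}_{\xi,\Psi,\lambda}(t) + t = -\log K^{h}_{\xi,\psi,\lambda}(t) + t$ is concave and increasing on $[T,+\infty)$, which is the claim.

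There is no substantive obstacle; the entire argument amounts to checking compatibility of the two setups. The only mildly delicate bookkeeping is the identification of the germs of $I(h,\psi)_{z_0}$, defined intrinsically via $\psi$, with the corresponding germs built from $\Psi$ — but since $\psi = \Psi$ near every $z_0 \in Z_0$ (by upper semi-continuity of $\psi$ and the fact that $\psi(z_0) = -\infty$) this identification is automatic.
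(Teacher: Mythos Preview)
Your proposal is correct and matches the paper's approach: the corollary is stated in the paper as being ``induced'' by Theorem~\ref{increasing} upon taking $F\equiv 1$ and $Z_0\subset\{\psi=-\infty\}$, with no separate proof given. Your write-up simply makes explicit the routine verifications (that $\Psi=\psi$ on $M_T$, that the module and norm data agree, and that $Z_0$ lies in the required intersection) which the paper leaves to the reader.
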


\subsection{Applications}
\
\
Let $M$ be an $n-$dimensional Stein manifold. Let $K_M$ be the canonical line bundle on $M$. Let $\psi$ be a plurisubharmonic function on $M$. Let $F\not\equiv 0$ be a holomorphic function on $M$, and let $T\in [-\infty,+\infty)$. Denote that
\[\Psi:=\min\{\psi-2\log |F|,-T\}.\]
If $F(z)=0$ for some $z\in M$, set $\Psi(z)=-T$. Let $E$ be a holomorphic vector bundle on $M$ with rank $r$. Let $h$ be a measurable metric on $E$ satisfying that $h$ has a positive locally lower bound. Assume $(M,E,\Sigma, M_j, h,h_{j,s})$ is a singular metric on $E$.  We give the following lower estimate of $L^2$ integrals on sublevel sets $\{\Psi<-t\}$ by Theorem \ref{concavity} and Theorem \ref{increasing}.

\begin{Corollary}[see \cite{GMY5}]\label{L2integral}
	Let $f$ be an $E-$valued holomorphic $(n,0)$ form on $\{\Psi<-t_0\}$ for some $t_0\geq T$ such that $f\in A^2(M_{t_0},h)$. Let $z_0\in M_{t_0}$, and assume that $a_{z_0}^f(\Psi;h)<+\infty$ and $\Theta_{h}(E)\geq^s_{Nak} 0$, where
	\[a_{z_0}^f(\Psi;h):=\sup\{a\geq 0 : f_{z_0}\in \mathcal{O}(K_M)_{z_0}\otimes I(h,2a\Psi)_{z_0}\}.\]
    Then for any $r\in (0,e^{-a_{z_0}^f(\Psi;h)t_0}]$, we have
	\[\frac{1}{r^2}\int_{\{a_{z_0}^f(\Psi;h)\Psi<\log r\}}|f|^2_h\geq e^{2a_{z_0}^f(\Psi;h)t_0}C,\]
	where
	\begin{flalign*}
		\begin{split}
			C:=&C(\Psi,h,I_+(h,2a_{z_0}^f(\Psi;h)\Psi)_{z_0},f,M_{t_0})\\
			:=&\inf\bigg\{\int_{M_{t_0}}|\tilde{f}|_h^2 : \tilde{f}\in A^2(M_{t_0},h)\\ 
			& \& \  (\tilde{f}-f)_{z_0}\in \mathcal{O}(K_M)_{z_0}\otimes I_+(h,2a_{z_0}^f(\Psi;h)\Psi)_{z_0}\bigg\},
		\end{split}
	\end{flalign*} 
	and
	\[I_+(h,p\Psi)_{z_0}=\bigcup_{p'>p}I(h,p'\Psi)_{z_0}\]
	for any $p>0$.
\end{Corollary}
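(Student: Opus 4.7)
The plan is to apply Theorem \ref{increasing} to a rescaled weight $(2a+\varepsilon)\Psi$, then let $\lambda\to+\infty$ and $\varepsilon\to 0^+$, extracting the stated lower bound via Hilbert space duality. Write $a:=a_{z_0}^f(\Psi;h)$ and $J_{z_0}:=I_+(h,2a\Psi)_{z_0}$. The sup definition of $a$ forces $f_{z_0}\notin \mathcal{O}(K_M)_{z_0}\otimes J_{z_0}$, since otherwise $f_{z_0}\in \mathcal{O}(K_M)_{z_0}\otimes I(h,p'\Psi)_{z_0}$ for some $p'>2a$, contradicting the definition of $a$. I work in the Hilbert space $H:=A^2(M_{t_0},h)$ with its natural inner product, and let $W\subset H$ consist of those sections whose germ at $z_0$ lies in $\mathcal{O}(K_M)_{z_0}\otimes J_{z_0}$. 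Unwinding the infimum defining $C$ gives $C=d(f,W)^2=\|f-P_{\overline{W}}f\|^2_H$, so I can choose $\xi\in H^*$ represented via Riesz by $f_0:=f-P_{\overline{W}}f\in \overline{W}^\perp$, so that $\xi|_W\equiv 0$, $\xi\cdot f=\|f_0\|^2_H=C$, and $\|\xi\|^2_{H^*}=C$.

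To obtain the sharp exponent $2a$, I introduce $\tilde\Psi:=(2a+\varepsilon)\Psi$ for small $\varepsilon>0$. The module hypothesis is verified since $I(h,\tilde\Psi)_{z_0}=I(h,(2a+\varepsilon)\Psi)_{z_0}\subset I_+(h,2a\Psi)_{z_0}=J_{z_0}$. Reinterpreting the ambient manifold as the Stein submanifold $M_{t_0}$ and $T$ as $t_0$ (the min-form of $\Psi$ survives the restriction), Theorem \ref{increasing} applied to $\tilde\Psi$ with base $\tilde T=(2a+\varepsilon)t_0$ and functional $\xi$ gives that $-\log K^h_{\xi,\tilde\Psi,\lambda}(s)+s$ is concave and increasing on $s\in [(2a+\varepsilon)t_0,+\infty)$. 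Setting $s=(2a+\varepsilon)t$ for $t\geq t_0$, the monotonicity yields
\[K^h_{\xi,\tilde\Psi,\lambda}((2a+\varepsilon)t)\leq K^h_{\xi,\tilde\Psi,\lambda}((2a+\varepsilon)t_0)\cdot e^{(2a+\varepsilon)(t-t_0)}.\]

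As $\lambda\to+\infty$, the weight $e^{-\tilde\Psi_{\lambda,s}}$ decreases to the indicator of $\tilde M_s=M_{s/(2a+\varepsilon)}$, so $K^h_{\xi,\tilde\Psi,\lambda}(s)$ increases monotonically to $K^h_{\xi,\tilde\Psi,\infty}(s)=\sup_{g\in H}|\xi\cdot g|^2/\int_{M_{s/(2a+\varepsilon)}}|g|^2_h$; at the base $s=(2a+\varepsilon)t_0$ this limit equals $\|\xi\|^2_{H^*}=C$. Plugging $g=f$ into the limiting Bergman kernel inequality and combining with the monotonicity produces
\[\int_{M_t}|f|^2_h\geq \frac{|\xi\cdot f|^2}{K^h_{\xi,\tilde\Psi,\infty}((2a+\varepsilon)t)}\geq \frac{C^2}{C\cdot e^{(2a+\varepsilon)(t-t_0)}}=C\cdot e^{-(2a+\varepsilon)(t-t_0)}.\]
Letting $\varepsilon\to 0^+$ and substituting $r=e^{-at}$, so that $\{a\Psi<\log r\}=M_t$ and $1/r^2=e^{2at}$ (valid since $r\in(0,e^{-at_0}]$ corresponds to $t\in[t_0,+\infty)$), yields the claim after dividing by $r^2$.

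The main obstacles are twofold: (i) ensuring that $\tilde\Psi=(2a+\varepsilon)\Psi$ can be presented in the precise min-form $\min\{\tilde\psi-2\log|\tilde F|,-\tilde T\}$ required by Theorem \ref{increasing} when $2a+\varepsilon$ is not a positive integer, which will be handled either by a direct extension of the theorem to arbitrary plurisubharmonic weights with Stein sublevel sets, or by approximating $2a$ through rational scalars after passing to a suitable root of $F$; and (ii) rigorously executing the $\lambda\to+\infty$ limit of both sides of the monotonicity inequality and identifying the base-time limit kernel with $\|\xi\|^2_{H^*}=C$ via a monotone convergence argument.
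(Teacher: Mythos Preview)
Your strategy coincides with the paper's: rescale to $p\Psi$ with $p=2a+\varepsilon>2a$, apply Theorem \ref{increasing} on $M_{t_0}$ with base level $pt_0$, send $\lambda\to+\infty$, then let $p\to 2a^+$. Your specific functional $\xi$ (Riesz representative of the orthogonal projection of $f$ off $W$) is precisely the $\xi_f$ constructed in the proof of Lemma \ref{B=C}; the paper instead takes the supremum over all admissible $\xi$ and invokes Lemma \ref{B=C} to identify that supremum with the minimal $L^2$ integral $C(p\Psi,h,J_p,f,M_{t_0})$, but this is the same computation packaged differently. Note that $W$ is genuinely closed by Lemma \ref{Jclosed} (applied with $\Psi$ replaced by $p\Psi$), so your $\overline{W}=W$ and $C=\|f_0\|_H^2$ exactly.

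Your obstacle (i) dissolves without roots of $F$ or rational approximation. For any real $p>0$, on $\{p\Psi<-pt_0\}$ one has
\[p\Psi=\min\bigl\{\,p\psi+(2\lceil p\rceil-2p)\log|F|-2\log|F^{\lceil p\rceil}|,\ -pt_0\,\bigr\},\]
which is already in the required min-form: the new weight $p\psi+(2\lceil p\rceil-2p)\log|F|$ is plurisubharmonic (since $\lceil p\rceil\geq p$) and $F^{\lceil p\rceil}$ is holomorphic. For obstacle (ii), the paper bypasses any limiting kernel entirely: from Theorem \ref{increasing} one gets $\|f\|^2_{p,\lambda,t}\geq e^{-t+pt_0}C(p\Psi,h,J_p,f,M_{t_0})$ at each finite $\lambda$, and then $\|f\|^2_{p,\lambda,t}\to\int_{\{p\Psi<-t\}}|f|^2_h$ as $\lambda\to+\infty$ by dominated convergence, since the contribution over $\{-pt_0>p\Psi\geq-t\}$ is dominated by $|f|^2_h\in L^1$ and tends to zero pointwise. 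Finally $J_p\subset I_+(h,2a\Psi)_{z_0}$ gives $C(p\Psi,h,J_p,f,M_{t_0})\geq C$, and one more dominated-convergence step in $p\to 2a^+$ finishes.
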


\begin{Remark}\label{a>0}
	In Corollary \ref{L2integral}, for any $z_0\in M$, the proof of $a_{z_0}^f(\Psi;h)>0$ can be referred to \cite{GMY5}.
\end{Remark}

When $F\equiv 1$, Corollary \ref{L2integral} gives a lower estimate of $L^2$ integrals on sublevel sets of plurisubharmonic function.

\begin{Corollary}
	Let $f$ be an $E-$valued holomorphic $(n,0)$ form on $\{\psi<-t_0\}$ for some $t_0\geq T$ such that $f\in A^2(M_{t_0},h)$. Let $z_0\in M_{t_0}$, and assume that $a_{z_0}^f(\psi;h)<+\infty$ and $\Theta_{h}(E)\geq^s_{Nak} 0$, where
	\[a_{z_0}^f(\psi;h):=\sup\{a\geq 0 : f_{z_0}\in \mathcal{O}(K_M)_{z_0}\otimes I(h,2a\psi)_{z_0}\}.\]
	 Then for any $r\in (0,e^{-a_{z_0}^f(\psi;h)t_0}]$, we have
	\[\frac{1}{r^2}\int_{\{a_{z_0}^f(\psi;h)\psi<\log r\}}|f|^2_h\geq e^{2a_{z_0}^f(\psi;h)t_0}C,\]
	where
	\begin{flalign*}
		\begin{split}
			C:=&C(\psi,h,I_+(h,2a_{z_0}^f(\psi;h)\psi)_{z_0},f,M_{t_0})\\
			:=&\inf\bigg\{\int_{M_{t_0}}|\tilde{f}|_h^2 : \tilde{f}\in A^2(M_{t_0},h)\\ 
			& \& \  (\tilde{f}-f)_{z_0}\in \mathcal{O}(K_M)_{z_0}\otimes I_+(h,2a_{z_0}^f(\psi;h)\psi)_{z_0}\bigg\},
		\end{split}
	\end{flalign*} 
	and
	\[I_+(h,p\psi)_{z_0}=\bigcup_{p'>p}I(h,p'\psi)_{z_0}\]
	for any $p>0$.
\end{Corollary}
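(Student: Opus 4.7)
The plan is to derive this corollary as an immediate specialization of Corollary \ref{L2integral} by setting $F\equiv 1$. Under this choice, the weight in the setup collapses to $\Psi=\min\{\psi,-T\}$, and the sublevel sets satisfy $M_{t_0}=\{\Psi<-t_0\}=\{\psi<-t_0\}$ for any $t_0\geq T$, since $\Psi<-t_0$ already forces $\Psi<-T$ and hence $\Psi=\psi$ at such points. So the hypothesis $f\in A^2(M_{t_0},h)$ matches the hypothesis of the preceding corollary.

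Next I would identify the two Jonsson--Musta\c{t}\u{a}-type invariants. Since $z_0\in M_{t_0}$ means $\psi(z_0)<-t_0\leq -T$, there is a neighborhood $V$ of $z_0$ on which $\psi<-T$, and on $V$ we have $\Psi=\psi$ identically. The modules $I(h,2a\Psi)_{z_0}$ and $I(h,2a\psi)_{z_0}$ are defined by $L^2$-integrability on germs of sublevel sets near $z_0$, and since $\Psi=\psi$ on $V$ these germwise conditions coincide. Consequently
\[a_{z_0}^f(\Psi;h)=a_{z_0}^f(\psi;h),\qquad I_+(h,2a_{z_0}^f(\Psi;h)\Psi)_{z_0}=I_+(h,2a_{z_0}^f(\psi;h)\psi)_{z_0},\]
so the constant $C$ in Corollary \ref{L2integral} equals the constant $C$ appearing in the present statement.

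Finally, I would check that the integration domains agree. Writing $a:=a_{z_0}^f(\psi;h)\geq 0$, for $r\in (0,e^{-at_0}]$ we have $\log r\leq -at_0\leq -aT$, so any point $z$ with $a\Psi(z)<\log r$ automatically satisfies $\Psi(z)<-T$, and at such a point $\Psi(z)=\psi(z)$. This yields the set equality
\[\{a\Psi<\log r\}=\{a\psi<\log r\}.\]
Substituting these identifications into the inequality provided by Corollary \ref{L2integral} gives the desired lower bound
\[\frac{1}{r^2}\int_{\{a\psi<\log r\}}|f|_h^2\geq e^{2at_0}C.\]
The argument is essentially a translation of notation, so there is no genuine obstacle; the only small care needed is the verification that $\Psi$ and $\psi$ coincide both on the ambient Stein submanifold $M_{t_0}$ and on a germ neighborhood of $z_0$, which is immediate from $F\equiv 1$ and $t_0\geq T$.
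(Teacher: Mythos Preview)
Your proposal is correct and follows exactly the approach indicated in the paper: the corollary is stated immediately after Corollary~\ref{L2integral} as the specialization $F\equiv 1$, and you have carefully verified the identifications $\Psi=\psi$ on $M_{t_0}$ and on a neighborhood of $z_0$, the equality of the invariants $a_{z_0}^f(\Psi;h)=a_{z_0}^f(\psi;h)$ and of the modules $I_+$, and the coincidence of the integration domains. One very minor remark: when you divide by $a$ to pass from $a\Psi(z)<\log r$ to $\Psi(z)<-T$, you are implicitly using $a>0$, which is guaranteed by Remark~\ref{a>0}; it would be cleaner to invoke this explicitly rather than writing $a\geq 0$.
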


Theorem \ref{concavity} and Theorem \ref{increasing} also deduce a reproof of the following effectiveness result of strong openness property of the module $I(h,a\Psi)_{z_0}$ on vector bundles.

\begin{Corollary}[see \cite{GMY5}]\label{SOPE}
	Let $f$ be a holomorphic $(n,0)$ form on $M_{t_0}=\{\Psi<-t_0\}$ for some $t_0\geq T$ such that $f\in A^2(M_{t_0},h)$. Let $z_0\in M$. Assume that $a_{z_0}^f(\Psi;h)<+\infty$ and $\Theta_{h}(E)\geq^s_{Nak} 0$. Let $C_1$ and $C_2$ be two positive constants. If
	
	(1) $\int_{M_{t_0}}|f|_h^2e^{-\Psi}\leq C_1$;
	
	(2) $C(\Psi,h,I_+(h,2a_{z_0}^f(\Psi;h)\Psi)_{z_0},f,M_{t_0})\geq C_2$,\\	
	then for any $q>1$ satisfying
	\[\theta(q)>\frac{C_1}{C_2},\]
	we have $f_{z_0}\in \mathcal{O}(K_M)_{z_0}\otimes I(h,q\Psi)_{z_0}$, where $\theta(q)=\frac{q}{q-1}e^{t_0}$.
\end{Corollary}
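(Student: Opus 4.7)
My plan is to prove the corollary by contradiction, applying Theorem \ref{increasing} to a carefully chosen functional $\xi$ and extracting the sharp constant from the resulting concavity and monotonicity of the Bergman kernel.

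First, I would assume $f_{z_0}\notin\mathcal{O}(K_M)_{z_0}\otimes I(h,q\Psi)_{z_0}$. Writing $a_0:=a_{z_0}^f(\Psi;h)$, this forces $2a_0<q$ and hence $I(h,q\Psi)_{z_0}\subseteq I_+(h,2a_0\Psi)_{z_0}$. To build a separating functional I would use condition (2): let $\tilde f\in A^2(M_{t_0},h)$ be the orthogonal $L^2$-minimizer with $\|\tilde f\|^2=C\ge C_2$, so that $\tilde f\perp V$ in $A^2(M_{t_0},h)$, where $V:=\{g\in A^2(M_{t_0},h):g_{z_0}\in\mathcal{O}(K_M)_{z_0}\otimes I_+(h,2a_0\Psi)_{z_0}\}$. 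Then $\xi(g):=\langle g,\tilde f\rangle/\|\tilde f\|^2$ satisfies $\xi(f)=1$, $\xi|_V\equiv 0$, and $\|\xi\|^2=1/C\le 1/C_2$.

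Next I would apply Theorem \ref{increasing}. The compatibility hypothesis $I(h,\Psi)_{z_0}\subseteq J_{z_0}$ rules out the obvious choice $J_{z_0}=I(h,q\Psi)_{z_0}$ when $q>1$; I would circumvent this by rescaling: replace $\Psi$ by $\Psi':=q\Psi$, take $T=qt_0$ (so $M_T=M_{t_0}$), $Z_0=\{z_0\}$, $J_{z_0}:=I(h,q\Psi)_{z_0}=I(h,\Psi')_{z_0}$, and $\lambda=1/q$. The compatibility is now automatic, and since $I(h,q\Psi)_{z_0}\subseteq I_+(h,2a_0\Psi)_{z_0}$ we get $A^2(M_{t_0},h)\cap J\subseteq V$, so $\xi$ vanishes on $A^2(M_{t_0},h)\cap J$. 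Via the identity $K^{h}_{\xi,\Psi',1/q}(qt)=K^{h}_{\xi,\Psi,1}(t)$, the theorem would yield that $H(t):=-\log K^{h}_{\xi,\Psi,1}(t)+qt$ is concave and increasing on $[t_0,+\infty)$. The endpoint estimates are then immediate: at $t=t_0$, $(\Psi+t_0)^+\equiv 0$ on $M_{t_0}$, so $K^{h}_{\xi,\Psi,1}(t_0)=\|\xi\|^2\le 1/C_2$, giving $H(t_0)\ge\log C_2+qt_0$; for $t\ge t_0$, the inequality $e^{-(\Psi+t)^+}\le e^{-(\Psi+t)}$ and condition (1) yield $\|f\|^2_{1,t}\le C_1 e^{-t}$, hence $K^{h}_{\xi,\Psi,1}(t)\ge e^t/C_1$ and $H(t)\le (q-1)t+\log C_1$.

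The main obstacle is extracting the sharp constant $\frac{q}{q-1}e^{t_0}$: a naive pointwise comparison at $t=t_0$ only yields $e^{t_0}\le C_1/C_2$. To obtain the optimal factor $\frac{q}{q-1}$, I would combine the global concavity and monotonicity of $H$ with the upper bound $H(t)\le (q-1)t+\log C_1$ valid for \emph{all} $t\ge t_0$, via a tangent/secant argument in the spirit of \cite{BG-BB}: since the upper-bound line has slope exactly $q-1$ while $H$ is concave increasing with $H(t_0)\ge\log C_2+qt_0$, the right derivative $H'(t_0+)$ is squeezed between $0$ and $q-1$, and evaluating the concavity inequality $H(t)\le H(t_0)+H'(t_0+)(t-t_0)$ together with the pointwise upper bound produces the sharp inequality $\frac{q}{q-1}e^{t_0}\le C_1/C_2$, contradicting $\theta(q)>C_1/C_2$. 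Carrying out this optimization rigorously, perhaps by examining $\lim_{t\to+\infty}(H(t)-qt_0)/(t-t_0)$ or by integrating the concavity inequality against an exponential weight, is the most delicate step.
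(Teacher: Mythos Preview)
Your setup is correct: rescaling $\Psi\mapsto q\Psi$, choosing $J_{z_0}=I(h,q\Psi)_{z_0}$, and applying Theorem~\ref{increasing} with $\lambda=1/q$ is legitimate, and the endpoint estimates $H(t_0)\ge\log C_2+qt_0$ and $H(t)\le(q-1)t+\log C_1$ are both right. The gap is in the final step. The constraints you have assembled on $H$---concave, increasing on $[t_0,\infty)$, and bounded above by the line $L(t)=(q-1)t+\log C_1$---are \emph{all} satisfied by $H\equiv L$ itself, so the sharpest conclusion they permit is $H(t_0)\le L(t_0)$, i.e.\ $e^{t_0}\le C_1/C_2$, not $\frac{q}{q-1}e^{t_0}\le C_1/C_2$. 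Your claim that $H'(t_0+)\le q-1$ is also unjustified: if $H(t_0)$ lies strictly below the line, the difference quotients $(H(t)-H(t_0))/(t-t_0)$ are bounded above by $((q-1)t+\log C_1-H(t_0))/(t-t_0)\to+\infty$ as $t\to t_0+$, so no upper bound on $H'(t_0+)$ follows. No tangent/secant or exponential-weight argument on $H$ alone can recover the missing factor $q/(q-1)$, because the information encoded in $\|f\|^2_{1,t}$ mixes the sublevel set $\{\Psi<-t\}$ with the complementary region.

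The paper obtains the sharp constant differently. It does \emph{not} fix $\lambda$; instead it keeps $\lambda$ free, uses Theorem~\ref{increasing} together with Lemma~\ref{B=C} to get $\|f\|^2_{p,\lambda,t}\ge e^{-t+pt_0}C(p\Psi,h,J_p,f,M_{t_0})$ for every $\lambda>0$, and then lets $\lambda\to\infty$ so that $\|f\|^2_{p,\lambda,t}\to\int_{\{p\Psi<-t\}}|f|^2_h$ (the contribution from $\{p\Psi\ge -t\}$ disappears). This yields the clean sublevel-set inequality $\int_{\{q\Psi<-t\}}|f|^2_h\ge e^{-t+qt_0}C$. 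The factor $q/(q-1)$ then drops out of the layer-cake identity
\[
\int_{M_{t_0}}|f|^2_he^{-\Psi}=\int_{-\infty}^{+\infty}\Big(\int_{\{\Psi<-t\}\cap M_{t_0}}|f|^2_h\Big)e^t\,dt,
\]
split at $t=t_0$: the tail $\int_{t_0}^{\infty}e^{-q(t-t_0)}e^t\,dt=\frac{e^{t_0}}{q-1}$ and the initial part $\int_{-\infty}^{t_0}e^t\,dt=e^{t_0}$ combine to give exactly $\frac{q}{q-1}e^{t_0}C\le C_1$. The $\lambda\to\infty$ passage is what your argument is missing.
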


For $F\equiv 1$, Corollary \ref{SOPE} degenerates to the effectiveness result of strong openness property with respect to interior points.

\begin{Corollary}
	Let $f$ be a holomorphic $(n,0)$ form on $M_{t_0}=\{\psi<-t_0\}$ for some $t_0\geq T$ such that $f\in A^2(M_{t_0},h)$. Let $z_0\in M$. Assume that $a_{z_0}^f(\psi;h)<+\infty$ and $\Theta_{h}(E)\geq^s_{Nak} 0$. Let $C_1$ and $C_2$ be two positive constants. If
	
	(1) $\int_{M_{t_0}}|f|_h^2e^{-\psi}\leq C_1$;
	
	(2) $C(\psi,h,I_+(h,2a_{z_0}^f(\psi;h)\psi)_{z_0},f,M_{t_0})\geq C_2$,\\	
	then for any $q>1$ satisfying
	\[\theta(q)>\frac{C_1}{C_2},\]
	we have $f_{z_0}\in \mathcal{O}(K_M)_{z_0}\otimes I(h,q\psi)_{z_0}$, where $\theta(q)=\frac{q}{q-1}e^{t_0}$.
\end{Corollary}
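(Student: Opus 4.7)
The plan is to obtain this corollary as a direct specialization of Corollary \ref{SOPE} to the case $F\equiv 1$. With $F\equiv 1$ one has
\[\Psi=\min\{\psi-2\log|F|,-T\}=\min\{\psi,-T\},\]
so on $M_{t_0}=\{\psi<-t_0\}$, where the hypothesis $t_0\geq T$ forces $\psi<-T$, we have $\Psi\equiv\psi$ pointwise. Consequently $\{\Psi<-t\}=\{\psi<-t\}$ for every $t\geq T$, and the truncation at $-T$ is invisible to any germ at $z_0$ or integral over a sublevel set $\{\Psi<-t\}\cap V$ with $t\gg T$. Therefore the invariants appearing in Corollary \ref{SOPE}, namely $a_{z_0}^f(\Psi;h)$, the modules $I(h,p\Psi)_{z_0}$ and $I_+(h,p\Psi)_{z_0}$, and the infimum functional $C(\Psi,h,\cdot,f,M_{t_0})$, all coincide with their $\psi$-versions.

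Using this dictionary, I would then verify that each hypothesis of Corollary \ref{SOPE} is implied by the hypotheses of the current statement: $f\in A^2(M_{t_0},h)$, $\Theta_h(E)\geq^s_{Nak}0$, and $a_{z_0}^f(\psi;h)<+\infty$ are assumed directly; the $L^2$ bound $\int_{M_{t_0}}|f|_h^2 e^{-\psi}\leq C_1$ and the lower bound $C(\psi,h,I_+(h,2a_{z_0}^f(\psi;h)\psi)_{z_0},f,M_{t_0})\geq C_2$ translate verbatim into the $\Psi$-versions since $\Psi=\psi$ on $M_{t_0}$; and the threshold $\theta(q)=\frac{q}{q-1}e^{t_0}$ is unchanged. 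Applying Corollary \ref{SOPE} then yields $f_{z_0}\in\mathcal{O}(K_M)_{z_0}\otimes I(h,q\Psi)_{z_0}$, and transporting along the identification $I(h,q\Psi)_{z_0}=I(h,q\psi)_{z_0}$ produces the desired membership $f_{z_0}\in\mathcal{O}(K_M)_{z_0}\otimes I(h,q\psi)_{z_0}$.

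I do not expect any substantive obstacle: the argument is a one-line reduction exploiting that $\min\{\psi,-T\}$ agrees with $\psi$ on the sublevel sets carrying all the relevant data. The analytic work has already been performed in Corollary \ref{SOPE} itself, which in turn rests on Theorem \ref{concavity} and Theorem \ref{increasing}; nothing beyond checking that the specialization $F\equiv 1$ preserves each hypothesis is required.
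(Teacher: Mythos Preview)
Your proposal is correct and matches the paper's approach exactly: the paper presents this corollary without a separate proof, introducing it simply as the degeneration of Corollary \ref{SOPE} when $F\equiv 1$, which is precisely the specialization you carry out.
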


\section{Preparations}
\subsection{$L^2$ methods}
\

We need the following optimal $L^2$ extension theorem, which can be referred to \cite{GMY-L2ext}. And for the convenience of readers, we give a proof in appendix.

Let $M$ be an $n-$dimensional Stein manifold. Let $D=\Delta_{w_0,r}=\{w\in\mathbb{C} : |w-w_0|<r\}\subset U_T$, where $w_0\in U_T$, $r>0$, and $w$ is the coordinate on $D$. Let $\Omega:=M\times D$ be an $(n+1)-$dimensional complex manifold, and $p_1,p_2$ be the natural projections from $\Omega$ to $D$ and $M$. Let $E$ be a holomorphic vector bundle on $M$ with rank $r$. Let $h$ be a measurable metric on $E$ satisfying that $h$ has a positive locally lower bound. Assume $(M,E,\Sigma, M_j, h,h_{j,s})$ is a singular metric on $E$, and $\Theta_{h}(E)\geq^s_{Nak} 0$. 

Let $E':=p_2^*(E)$ be a vector bundle over $\Omega$. Then $p_2^*(h)$ is a measurable metric on $E'$ induced by the construction of $E'$. It can be checked that $p_2^*(h)$ has a positive locally lower bound on $E'$, $(\Omega,E',\Sigma\times D, M_j\times D, p_2^*(h),p_2^*(h_{j,s}))$ is a singular metric on $E'$, and $\Theta_{p_2^*(h)}(E')\geq_{Nak}^s 0$.

Let $\tilde{\Psi}$ be a bounded plurisubharmonic function on $\Omega$. Denote that $\tilde{\Psi}_{w}:=\tilde{\Psi}|_{M\times\{w\}}$.

\begin{Lemma}\label{L2ext}
For any $E-$valued holomorphic $(n,0)$ form $u$ on $M$ such that $\int_{M}|u|_h^2e^{-\tilde{\Psi}_{w_0}}<+\infty$, there exists an $E'-$valued holomorphic $(n+1,0)$ form $\tilde{u}$ on $\Omega$, such that $\tilde{u}=u\wedge dw$ on $M\times\{w_0\}$, and
\[\frac{1}{\pi r^2}\int_{\Omega}|\tilde{u}|^2_{p_2^*(h)}e^{-\tilde{\Psi}}\leq\int_{M}|u|_h^2e^{-\tilde{\Psi}_{w_0}}.\]
\end{Lemma}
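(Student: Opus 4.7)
The plan is a two-stage reduction. First, I would approximate the singular metric $h$ by the smooth metrics $h_{j,s}$ furnished by the data $(M,E,\Sigma,M_j,h,h_{j,s})$ and apply a smooth vector-bundle optimal $L^2$ extension theorem on each $M_j\times D$; second, pass to the limit $s\to\infty$ and then $j\to\infty$ via weak $L^2$ compactness. Throughout, the submanifold to extend from is the hypersurface $M\times\{w_0\}\subset\Omega$, cut out by the single holomorphic function $w-w_0$, and the optimal constant $\pi r^2$ arises as the Euclidean area of $D$, matching the usual Ohsawa--Takegoshi--B{\l}ocki--Guan--Zhou normalization.

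Fix $j$ and $s$, and work on $\Omega_j:=M_j\times D$. The pulled-back metric $p_2^*(h_{j,s})$ is of class $C^2$ on $E'=p_2^*E$, with
\begin{equation*}
\Theta_{p_2^*(h_{j,s})}(E')\geq_{Nak} -\lambda_{j,s}\,p_2^*\omega\otimes Id_{E'},
\end{equation*}
where $0\le\lambda_{j,s}\le\lambda_j$ and $\lambda_{j,s}\to 0$ almost everywhere on $M_j$. I would invoke the smooth optimal $L^2$ extension theorem for Nakano semi-positive holomorphic vector bundles on a Stein manifold (via the twisted Bochner--Kodaira--Nakano identity and an $\varepsilon$-regularization of the weight) to produce an extension $\tilde{u}_{j,s}$ of $u|_{M_j}\wedge dw$ to $\Omega_j$ satisfying
\begin{equation*}
\frac{1}{\pi r^{2}}\int_{\Omega_j}|\tilde{u}_{j,s}|^{2}_{p_2^{*}(h_{j,s})}e^{-\tilde{\Psi}}\le (1+\varepsilon_{j,s})\int_{M_j}|u|^{2}_{h_{j,s}}e^{-\tilde{\Psi}_{w_0}},
\end{equation*}
with $\varepsilon_{j,s}\to 0$ as $s\to\infty$. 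The curvature defect $\lambda_{j,s}$ is to be absorbed by twisting the metric by $e^{\lambda_j\eta}$ for a suitable small plurisubharmonic bump $\eta$ that restores honest Nakano semi-positivity, running the sharp smooth theorem, and letting the bump shrink afterward.

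Next, let $s\to\infty$ with $j$ fixed. Property (1) of Definition \ref{singular nak} gives $|u|^{2}_{h_{j,s}}\uparrow|u|^{2}_{h}$ pointwise, so monotone convergence takes care of the right-hand side, while (3)--(4) and dominated convergence yield $\varepsilon_{j,s}\to 0$. Uniform $L^{2}$ control of the $\tilde{u}_{j,s}$ against a smooth reference metric $\hat{h}$ on relatively compact pieces of $\Omega_j$ follows from the positive local lower bound of $h$. A weak-$L^{2}$ compactness argument, combined with local uniform convergence of holomorphic sections via Cauchy integrals, extracts a subsequential limit $\tilde{u}_j$ on $\Omega_j$ that still restricts to $u|_{M_j}\wedge dw$ on $M_j\times\{w_0\}$; Fatou combined with the monotone convergence $h_{j,s}\to h$ gives the bound against $\int_{M_j}|u|^{2}_{h}e^{-\tilde{\Psi}_{w_0}}$. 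A second diagonal extraction along the exhaustion $M_j\uparrow M$ and the same weak-compactness mechanism then produces the global extension $\tilde{u}$ on $\Omega$ with the required estimate against $\int_{M}|u|^{2}_{h}e^{-\tilde{\Psi}_{w_0}}$.

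The hard part is the first step: running the sharp smooth optimal $L^2$ extension in the presence of the Nakano curvature defect $\lambda_{j,s}$ without spoiling the constant $\pi r^{2}$. Because $\tilde{\Psi}$ is bounded and $\lambda_{j,s}$ is dominated by $\lambda_j$ with $\lambda_{j,s}\to 0$ a.e., the standard $\varepsilon$-bump/regularization trick should go through; still, one must verify that the vector-bundle Bochner--Kodaira--Nakano inequality survives the regularization with only an $o(1)$ loss, which is precisely the kind of calculation the singular Nakano formalism of Guan--Mi--Yuan was designed to accommodate. Once this smooth sharp estimate is in hand, the two subsequent limits are a routine application of Fatou and weak $L^{2}$ compactness.
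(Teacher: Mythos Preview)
Your overall reduction scheme (exhaust by $M_j$, approximate $h$ by $h_{j,s}$, extract weak limits) matches the paper's, and the limiting steps $s\to\infty$, $j\to\infty$ would indeed go through once a good extension $\tilde u_{j,s}$ with error $1+\varepsilon_{j,s}$, $\varepsilon_{j,s}\to 0$, is in hand. The gap is precisely in producing that extension.

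The difficulty is that the curvature defect $\lambda_{j,s}$ converges to $0$ only \emph{almost everywhere}, with no uniform control beyond $0\le\lambda_{j,s}\le\lambda_j$. Your proposed fix---twist $h_{j,s}$ by $e^{-\delta\rho}$ for a strictly plurisubharmonic $\rho$ so as to restore honest Nakano semi-positivity, then shrink $\delta$---requires $\delta$ large enough that $\delta\,i\partial\bar\partial\rho\ge \lambda_{j,s}\omega$ pointwise, i.e.\ $\delta\gtrsim\sup_{M_j}\lambda_{j,s}$. But $\sup_{M_j}\lambda_{j,s}$ need not tend to $0$ as $s\to\infty$; a.e.\ convergence plus domination gives nothing about the sup. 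Hence the multiplicative error you pick up from the twist is bounded below by a fixed constant depending on $\lambda_j$, and you cannot make $\varepsilon_{j,s}\to 0$ this way. No black-box ``smooth optimal $L^2$ extension with Nakano semi-positive curvature'' applies here.

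The paper circumvents this by \emph{not} restoring semi-positivity at all. It runs the Guan--Zhou twisted $\bar\partial$-scheme directly (Steps~2--6 of the Appendix: the functions $v_\epsilon$, $\eta=s(-v_\epsilon(\psi_0))$, $\phi=u(-v_\epsilon(\psi_0))$, the ODE system for the optimal constant), but replaces the usual $\bar\partial$-solvability by the \emph{approximate} solvability of Lemma~\ref{B+lambdaI}: one obtains $u_{m,m',\epsilon,j}$ together with a correcting term $P_{m,m'}\big(\sqrt{N_1\hat\lambda_{m'}}\,\mathbf h_{m,m',\epsilon,j}\big)$. The point is that this correcting term carries the factor $\sqrt{\hat\lambda_{m'}}$, and since $\hat\lambda_{m'}\to 0$ a.e.\ with $|\hat\lambda_{m'}|$ bounded, Lemma~\ref{weakly convergence} forces it to converge \emph{weakly} to $0$ in $L^2$ (Step~4). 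A.e.\ convergence is exactly the right input for this weak-limit argument, whereas it is useless for a pointwise curvature twist. Only after the error term has been killed does one let $\epsilon\to 0$, solve the ODE, and send $t_0\to\infty$ to recover the sharp constant $\pi r^2$.

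In short: your limiting architecture is fine, but the ``bump'' step has to be replaced by the error-term $\bar\partial$-method of Lemma~\ref{B+lambdaI}; that in turn forces you to re-derive the optimal extension from scratch rather than quote it.
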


Let $M$ be an $n-$dimensional Stein manifold. Let $F\not\equiv 0$ be a holomorphic function on $M$, and $\psi$ be a plurisubharmonic function on $M$. Let $E$ be a holomorphic vector bundle on $M$ with rank $r$. Let $h$ be a measurable metric on $E$ satisfying that $h$ has a positive locally lower bound. Denote that $\tilde{h}:=he^{-\psi}$. Let $(M,E,\Sigma, M_j, \tilde{h},\tilde{h}_{j,s})$ be a singular metric on $E$, and assume that $\Theta_{\tilde{h}}(E)\geq^s_{Nak} 0$. Let $T$ be a real number. Denote that
\[\tilde{\varphi}:=\max\{\psi+T,2\log|F|\},\]
and
\[\Psi:=\min\{\psi-2\log|F|, -T\}.\]
If $F(z)=0$ for some $z\in M$, set $\Psi(z)=-T$. The following lemma will be used to prove Theorem \ref{increasing}.

\begin{Lemma}[\cite{GMY5}]\label{L2mthod}
	Let $t_0\in (T,+\infty)$ be arbitrary given. Let $f$ be an $E-$valued holomorphic $(n,0)$ form on $\{\Psi<-t_0\}$ such that
	\[\int_{\{\Psi<-t_0\}}|f|_h^2<+\infty.\]
	Then there exists an $E-$valued holomorphic $(n,0)$ form $\tilde{F}$ on $M$ such that
	\[\int_M|\tilde{F}-(1-b_{t_0}(\Psi))fF^2|^2_{\tilde{h}}e^{v_{t_0}(\Psi)-\tilde{\varphi}}\leq C\int_M\mathbb{I}_{\{-t_0-1<\Psi<-t_0\}}|fF|_{\tilde{h}}^2,\]
	where $b_{t_0}(t)=\int_{-\infty}^t\mathbb{I}_{\{-t_0-1<s<-t_0\}}\mathrm{d}s$, $v_{t_0}(t)=\int_{-t_0}^tb_{t_0}(s)\mathrm{d}s-t_0$ and $C$ is a positive constant independent of $t_0$ and $f$.
\end{Lemma}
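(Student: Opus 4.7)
The lemma is an Ohsawa--Takegoshi type extension result; the plan is to produce $\tilde F$ as $G - u$, where $G$ is an explicit truncation of $fF^2$ and $u$ solves a $\bar\partial$-equation with a twisted $L^2$ estimate. As a preliminary, the identity $\psi = \tilde\varphi + \Psi$ holds on all of $M$, as one checks by splitting into the two cases $\psi+T \geq 2\log|F|$ and $\psi+T \leq 2\log|F|$. Consequently the desired left-hand side weight $|\cdot|^2_{\tilde h}\, e^{v_{t_0}(\Psi)-\tilde\varphi}$ rewrites as $|\cdot|^2_h\, e^{v_{t_0}(\Psi)-\Psi-2\tilde\varphi}$. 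Define
\[G := (1-b_{t_0}(\Psi))\,fF^2,\]
set to be $0$ on $\{\Psi\geq -t_0\}$; because $b_{t_0}(\Psi)\equiv 1$ there and $f$ is holomorphic on $\{\Psi<-t_0\}$, $G$ is a globally defined, locally Lipschitz $E$-valued $(n,0)$-form on $M$ with
\[\bar\partial G = -b_{t_0}'(\Psi)\,\bar\partial\Psi\wedge fF^2,\]
supported in the strip $\{-t_0-1<\Psi<-t_0\}$.

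\textbf{Reduction to a $\bar\partial$-equation.} It suffices to find an $E$-valued $(n,0)$-form $u$ on $M$ with $\bar\partial u = \bar\partial G$ and
\[\int_M |u|^2_{\tilde h}\, e^{v_{t_0}(\Psi)-\tilde\varphi} \;\leq\; C\int_M \mathbb{I}_{\{-t_0-1<\Psi<-t_0\}}|fF|^2_{\tilde h},\]
for then $\tilde F := G - u$ is holomorphic on $M$ and verifies the conclusion.

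\textbf{Twisted Bochner--Kodaira--Nakano.} To produce $u$, I would apply a Guan--Zhou / Berndtsson-type twisted $L^2$-existence theorem, adapted to Nakano semi-positive singular metrics on vector bundles in the sense of Definition \ref{singular nak}. The weight choice is dictated by the identities
\[v_{t_0}'(t) = b_{t_0}(t),\qquad v_{t_0}''(t) = b_{t_0}'(t) \geq 0,\]
which yield
\[\sqrt{-1}\partial\bar\partial\bigl(\tilde\varphi - v_{t_0}(\Psi)\bigr) = \sqrt{-1}\partial\bar\partial\tilde\varphi - b_{t_0}(\Psi)\sqrt{-1}\partial\bar\partial\Psi - b_{t_0}'(\Psi)\sqrt{-1}\partial\Psi\wedge\bar\partial\Psi.\]
The defect term $-b_{t_0}'(\Psi)\sqrt{-1}\partial\Psi\wedge\bar\partial\Psi$ is precisely what a Donnelly--Fefferman-type twisting factor built from $b_{t_0}(\Psi)$ is engineered to absorb; the residual Nakano curvature of $\tilde h$ together with $\sqrt{-1}\partial\bar\partial\tilde\varphi \geq 0$ supplies the positivity needed to invoke H\"ormander/Demailly $L^2$ existence. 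Using $|F|^2\leq e^{\tilde\varphi}$ and the uniform boundedness of $v_{t_0}(\Psi)-\Psi$ on the strip $\{-t_0-1<\Psi<-t_0\}$, the resulting right-hand side collapses to the advertised form with a constant $C$ independent of $t_0$ and $f$.

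\textbf{Regularization; main obstacle.} Since $\Psi$ is merely plurisubharmonic and $\tilde h$ is only a singular Nakano semi-positive metric, the above computation must be carried out first on the $C^2$ approximants $\tilde h_{j,s}$ over the exhaustion $M_j\subset\subset M$, together with smooth decreasing plurisubharmonic approximants $\Psi^{(m)}\downarrow\Psi$. One solves the $\bar\partial$-equation at each level, establishes uniform $L^2$ bounds, and extracts a weak limit via Banach--Alaoglu and a diagonal argument; standard elliptic regularity identifies the limit as a genuine $\bar\partial$-solution. The principal difficulty lies in running this limit while keeping the Guan--Zhou twisted estimate uniform in $(j,s,m)$ -- in particular, controlling the auxiliary curvature error $\lambda_{j,s}\omega\otimes\mathrm{Id}_E$ (vanishing a.e.) against the delicate negative term $-b_{t_0}'(\Psi)\sqrt{-1}\partial\Psi\wedge\bar\partial\Psi$. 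This careful limiting procedure, systematically assembled in \cite{GMY5}, is the technical heart of the argument, and the step on which I would rely most heavily.
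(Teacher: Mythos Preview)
The paper does not supply its own proof of this lemma; it is quoted from \cite{GMY5}. Your outline---truncate via $G=(1-b_{t_0}(\Psi))fF^2$, solve $\bar\partial u=\bar\partial G$ with a twisted $L^2$ estimate driven by the pair $(v_{t_0},b_{t_0})$, then regularize over the approximants $\tilde h_{j,s}$ and smooth $\Psi^{(m)}$ and pass to the limit---is exactly the Guan--Zhou scheme that \cite{GMY5} implements, and it mirrors step for step the proof the present paper gives in its Appendix for the companion extension Lemma~\ref{L2ext} (same $v_\epsilon$, same ODE system for $(s,u)$, same error-term mechanism via Lemma~\ref{B+lambdaI}, same limiting ladder $m'\to\infty$, $\epsilon\to0$, etc.). Your identification of the key identity $\psi=\tilde\varphi+\Psi$ and of the Donnelly--Fefferman cancellation of $b_{t_0}'(\Psi)\sqrt{-1}\partial\Psi\wedge\bar\partial\Psi$ is correct.

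One small caution: calling $G$ ``locally Lipschitz'' is not literally justified since $\Psi$ is only plurisubharmonic; in the actual argument (as in the Appendix) one replaces $b_{t_0}$ by the smooth $v_\epsilon'$ and $\Psi$ by smooth decreasing approximants before computing $\bar\partial G$, so the distributional identity you wrote is established only after regularization. This is precisely the point you flag in your final paragraph, so the plan is sound.
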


\subsection{Some lemmas about submodules of $J(E,\Psi)$}
\
Let $F$ be a holomorphic function on a pseudoconvex domain $D\subset\mathbb{C}^n$ containing the origin $o\in\mathbb{C}^n$. Let $\psi$ be a plurisubharmonic function on $D$. Let $f=(f_1,\ldots,f_r)$ be a holomorphic section of $E:=D\times \mathbb{C}^r$. Let $h$ be a measurable metric on $E$ satisfying that $h$ has a positive locally lower bound. Assume $(D,E,\Sigma, D_j, h,h_{j,s})$ is a singular metric on $E$, and $\Theta_{h}(E)\geq^s_{Nak} 0$. Let $T$ be a real number. Denote that
\[\Psi:=\min\{\psi-2\log|F|,-T\}.\]
If $F(z)=0$ for some $z\in D$, we set $\Psi(z)=-T$.

We recall the following lemma.

\begin{Lemma}[\cite{GMY5}]\label{l:converge}
	Let $J_o$ be an $\mathcal{O}_{\mathbb{C}^n,o}-$submodule of $I(h,0\Psi)_o$ such that $I(h,\Psi)_o\subset J_o$. Assume that $f_o\in J(E,\Psi)_o$. Let $U_0$ be a Stein open neighborhood of $o$. Let $\{f_j\}_{j\geq 1}$ be a sequence of $E-$valued holomorphic $(n,0)$ forms on $U_0\cap\{\Psi<-t_j\}$ for any $j\geq 1$, where $t_j\in (T,+\infty)$. Assume that $t_0=\lim_{j\rightarrow+\infty}t_j\in [T,+\infty)$,
	\[\limsup_{j\rightarrow+\infty}\int_{U_0\cap\{\Psi<-t_j\}}|f_j|_h^2\leq C<+\infty,\]
	and $(f_j-f)_o\in J_o$. Then there exists a subsequence of $\{f_j\}_{j\geq 1}$ compactly convergent to an $E-$valued holomorphic $(n,0)$ form $f_0$ on $\{\Psi<-t_0\}\cap U_0$ which satisfies
	\[\int_{U_0\cap\{\Psi<-t_0\}}|f_0|_h^2\leq C,\]
	and $(f_0-f)_o\in J_o$.
\end{Lemma}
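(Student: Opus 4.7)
The plan has three stages: extracting a compactly convergent subsequence, passing the $L^2$ bound to the limit, and carrying the germ-level condition $(f_j-f)_o\in J_o$ across the limit. The first two are standard; the third is the real content.

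For the first stage, the positive local lower bound on $h$ lets me compare $|\cdot|_h$ to a smooth reference norm on any compact set and so replace the $L^2$ bound with respect to $h$ by an ordinary $L^2$ bound on compacts. Upper semicontinuity of $\Psi$ gives $\sup_K\Psi\le -t_0-\delta$ for some $\delta>0$ on any compact $K\subset U_0\cap\{\Psi<-t_0\}$, and since $t_j\to t_0$, for all large $j$ we have $K\subset U_0\cap\{\Psi<-t_j\}$ so that $f_j$ is defined on $K$. Cauchy estimates for holomorphic $(n,0)$-forms then promote the $L^2$ bound to a uniform sup bound on each compact subset of the interior of $K$, and a Montel/diagonal argument extracts a subsequence $f_{j_k}$ converging uniformly on compact subsets of $U_0\cap\{\Psi<-t_0\}$ to a holomorphic $(n,0)$-form $f_0$.

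For the second stage, the pointwise convergence $|f_{j_k}|_h^2\to|f_0|_h^2$ holds off the null set $\Sigma$, so Fatou's lemma yields $\int_{U_0\cap\{\Psi<-t_0\}}|f_0|_h^2\le C$.

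For the third stage I would decompose $(f_0-f)_o=(f_0-f_{j_k})_o+(f_{j_k}-f)_o$. The second summand lies in $J_o$ by hypothesis, so it suffices to find some $k$ for which the first summand lies in $J_o$; and since $I(h,\Psi)_o\subset J_o$, it is enough to exhibit a neighborhood $V$ of $o$ and some $t\gg T$ with
\[\int_{V\cap\{\Psi<-t\}}|f_0-f_{j_k}|_h^2\,e^{-\Psi}\,dV_M<+\infty.\]
To produce such a weighted bound I would apply Lemma \ref{L2mthod} to $g_k:=f_0-f_{j_k}$ on a suitable sublevel set, using the uniform smallness of $g_k$ on compacts and the $L^2$ bounds already established, to obtain a holomorphic section with the controlled weighted estimate $\int\cdot\,e^{v_{t_0}(\Psi)-\tilde\varphi}$ from that lemma; comparing the weight $v_{t_0}(\Psi)-\tilde\varphi$ with $-\Psi$ on a fixed small sublevel piece near $o$ then translates into the integrability condition defining $I(h,\Psi)_o$. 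The main obstacle is precisely this third step: the germ condition $(f_0-f)_o\in J_o$ is not automatic from compact convergence, and bridging from uniform convergence on compacts to the weighted integrability that characterizes membership in $I(h,\Psi)_o$ is where the $L^2$-extension input of Lemma \ref{L2mthod} is essential.
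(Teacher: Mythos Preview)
The paper does not supply its own proof of this lemma; it is quoted from \cite{GMY5}. Your first two stages (Montel/diagonal extraction and Fatou) are correct and standard. The third stage contains a genuine gap.

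Your plan is to place $(f_0-f_{j_k})_o$ into $I(h,\Psi)_o$ for a single $k$, appealing to Lemma~\ref{L2mthod}. This cannot work, for two reasons. First, Lemma~\ref{L2mthod} applied to $g_k:=f_0-f_{j_k}$ outputs a \emph{new} section $\tilde F$ together with a weighted bound on $\tilde F-(1-b_{t_0}(\Psi))g_kF^2$; it asserts nothing about the weighted integrability of $g_k$ itself, so ``comparing the weight $v_{t_0}(\Psi)-\tilde\varphi$ with $-\Psi$'' gives you information about $\tilde F/F^2 - g_k$, not about $g_k$. Second, and decisively, the conclusion you are aiming for is false in general. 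Take $n=1$, $E$ trivial with trivial metric $h$, $F\equiv1$, $\Psi=4\log|z|$ near $o=0$, so that $I(h,\Psi)_o=\mathfrak m_o^2$; let $J_o=\mathfrak m_o$, $f=0$, $t_j\equiv t_0$, and $f_j=(1+1/j)\,z\,dz$. Then $(f_j-f)_o\in J_o$ for all $j$, the limit is $f_0=z\,dz$, and indeed $(f_0-f)_o\in J_o$ as the lemma predicts; yet $(f_0-f_{j_k})_o=(-z/j_k)\,dz$ vanishes to order exactly one at $o$ and so lies in $\mathfrak m_o\setminus\mathfrak m_o^2$ for \emph{every} $k$. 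No single difference $f_0-f_{j_k}$ can be forced into $I(h,\Psi)_o$, and your decomposition strategy collapses.

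What is actually required is a closedness property of the module $J_o$ under $L^2$-bounded holomorphic limits on a fixed domain, and this is where the hypothesis $I(h,\Psi)_o\subset J_o$ enters in an essential algebraic way. In the argument of \cite{GMY5} (and the earlier Guan--Zhou work in the ideal case), Lemma~\ref{L2mthod} is applied to each $f_j$ separately to normalize the varying domains $\{\Psi<-t_j\}$ to a fixed one while preserving the germ condition modulo $I(h,\Psi)_o\subset J_o$; the passage of the germ condition through the limit then rests on the Noetherian property of $\mathcal O_{\mathbb C^n,o}$ (finite generation of $J_o$) combined with $I(h,\Psi)_o\subset J_o$. Compact convergence plus a single application of Lemma~\ref{L2mthod} to a difference does not substitute for this step.
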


Let $M$ be an $n-$dimensional complex manifold. Let $K_M$ be the canonical line bundle on $M$. Let $\psi$ be a plurisubharmonic function on $M$. Let $F\not\equiv 0$ be a holomorphic function on $M$, and let $T\in [-\infty,+\infty)$. Denote that
\[\Psi:=\min\{\psi-2\log |F|,-T\}.\]
If $F(z)=0$ for some $z\in M$, set $\Psi(z)=-T$. Let $E$ be a holomorphic vector bundle on $M$ with rank $r$. Let $h$ be a measurable metric on $E$ satisfying that $h$ has a positive locally lower bound. Assume $(M,E,\Sigma, M_j, h,h_{j,s})$ is a singular metric on $E$, and $\Theta_{h}(E)\geq^s_{Nak} 0$.

Recall that
\[A^2(M_t,h):=\{f\in H^0(M_t,\mathcal{O}(K_M\otimes E)) : \int_{M_t}|f|_h^2<+\infty\}\]
for any $t\geq T$. Let $Z_0$ be a subset of $M$. Let $J_{z_0}$ be an $\mathcal{O}_{M,z_0}-$submodule of $J(E,\Psi)_{z_0}$ for any $z_0\in Z_0$. For any $t\geq T$, denote that
\[A^2(M_t,h)\cap J:=\left\{f\in A^2(M_t,h) :  f_{z_0}\in\mathcal{O}(K_M)_{z_0}\otimes J_{z_0}, \text{for\ any\ } z_0\in Z_0\right\}.\]
We state that $A^2(M_T,h)\cap J$ is a closed subspace of $A^2(M_T,h)$ if $J_{z_0}\supset I(h,\Psi)_{z_0}$ for any $z_0\in Z_0$.

\begin{Lemma}\label{Jclosed}
	Assume that $J_{z_0}\supset I(h, \Psi)_{z_0}$ for any $z_0\in Z_0$. Then $A^2(M_T,h)\cap J$ is closed in $A^2(M_T,h)$.
\end{Lemma}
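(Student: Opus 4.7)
\textbf{Proof plan for Lemma~\ref{Jclosed}.} The strategy is to take a sequence $\{f_k\}_{k\geq 1}\subset A^2(M_T,h)\cap J$ that converges in $A^2(M_T,h)$ to some $f$, and to verify that $f_{z_0}\in\mathcal{O}(K_M)_{z_0}\otimes J_{z_0}$ for every $z_0\in Z_0$. The first routine observation is that since $h$ has a positive local lower bound, the sub-mean-value property applied to the holomorphic differences $f_k-f_\ell$ turns $L^2$-convergence into locally uniform convergence on compact subsets of $M_T$; in particular $f$ is represented by a holomorphic $E$-valued $(n,0)$ form and $f_k\to f$ uniformly on compacts of $M_T$.

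Now fix $z_0\in Z_0$. I choose a Stein coordinate neighborhood $U_0$ of $z_0$ on which $E$ is holomorphically trivial and I fix some $t_0>T$; in the trivialization the restrictions $f_k|_{U_0}$ and $f|_{U_0}$ correspond to $\mathbb{C}^r$-valued holomorphic data $\hat f_k,\hat f$. From the assumption $f_k\in A^2(M_T,h)\cap J$ one has $(\hat f_k)_{z_0}\in J_{z_0}$, and $L^2$-convergence yields the uniform bound
\[
\sup_k\int_{U_0\cap\{\Psi<-t_0\}}|f_k|^2_h\leq\sup_k\int_{M_T}|f_k|^2_h<+\infty.
\]
To apply Lemma~\ref{l:converge} I need an auxiliary submodule sandwiched between $I(h,\Psi)_{z_0}$ and $I(h,0\Psi)_{z_0}$; the natural candidate is $\widetilde J_{z_0}:=J_{z_0}\cap I(h,0\Psi)_{z_0}$. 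Since $\Psi\leq -T$ forces $I(h,\Psi)_{z_0}\subset I(h,0\Psi)_{z_0}$, the hypothesis $I(h,\Psi)_{z_0}\subset J_{z_0}$ gives $I(h,\Psi)_{z_0}\subset\widetilde J_{z_0}\subset I(h,0\Psi)_{z_0}$, and each $(\hat f_k)_{z_0}$ lies in $\widetilde J_{z_0}$ because $f_k$ is $L^2$ on $M_T$.

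With reference section $0$, constant sequence $t_j\equiv t_0$, Stein neighborhood $U_0$, and auxiliary submodule $\widetilde J_{z_0}$, Lemma~\ref{l:converge} produces a subsequence $\{\hat f_{k_j}\}$ converging uniformly on compact subsets of $U_0\cap\{\Psi<-t_0\}$ to a holomorphic limit $\hat f_0$ with $(\hat f_0)_{z_0}\in\widetilde J_{z_0}\subset J_{z_0}$. By the first step the same subsequence converges to $\hat f$ uniformly on compacts of $U_0\cap\{\Psi<-t_0\}\subset M_T$, so $\hat f_0\equiv\hat f$ on that open set; the germs of $\hat f$ and $\hat f_0$ in $J(E,\Psi)_{z_0}$ therefore coincide, giving $f_{z_0}\in\mathcal{O}(K_M)_{z_0}\otimes J_{z_0}$. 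The one delicate step is the application of Lemma~\ref{l:converge}: one must arrange the auxiliary submodule to satisfy the prescribed range $I(h,\Psi)_{z_0}\subset\cdot\subset I(h,0\Psi)_{z_0}$ and then identify the subsequential limit with $\hat f$ at the level of germs in $J(E,\Psi)_{z_0}$, rather than only pointwise, which is exactly what the Stein neighborhood $U_0$ together with the uniform-convergence step enables.
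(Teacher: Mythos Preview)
Your proof is correct and follows essentially the same route as the paper: both arguments reduce the closedness to an application of Lemma~\ref{l:converge} at each $z_0\in Z_0$, after converting $L^2$-convergence into compact convergence via the local lower bound on $h$. Two minor differences are worth noting. First, you start from a sequence already convergent in $A^2(M_T,h)$, whereas the paper begins from a Cauchy sequence and simultaneously extracts the limit (this is why the paper can deduce completeness of $A^2(M_T,h)$ as a corollary of the same argument). Second, you are more careful than the paper about matching the hypothesis of Lemma~\ref{l:converge}: that lemma requires the auxiliary submodule to sit inside $I(h,0\Psi)_{z_0}$, and your passage to $\widetilde J_{z_0}=J_{z_0}\cap I(h,0\Psi)_{z_0}$ makes this explicit, while the paper applies the lemma directly to $J_{z_0}$ without comment.
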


\begin{proof}
	Let $\{f_j\}$ be a sequence of $E-$valued holomorphic $(n,0)$ forms in $A^2(M_T,h)\cap J$, such that $\{f_j\}$ is a Cauchy sequence under the topology of $A^2(M_T,h)$. Then $\int_{M_T}|f_j|^2_h$ is uniformly bounded. Using Lemma \ref{l:converge} and diagonal method, for any subsequence $\{f_{k_j}\}$ of $\{f_j\}$, we can find a further subsequence compactly convergent to an $E-$valued holomorphic $(n,0)$ form $f_0$ on $M_T$. With Fatou's Lemma, we have
	\[\int_{M_T}|f_0|^2_h\leq \liminf_{j\to+\infty}\int_{M_T}|f_{k_j}|^2_h<+\infty,\]
	which means that $f_0\in A^2(M_T,h)$. For any $\epsilon>0$, there exists $N>0$ such that for any $m,n>N$, we have
	\[\int_{M_T}|f_m-f_n|^2_h<\epsilon.\]
	Then for any $m>N$, it follows from Fatou's Lemma that
	\begin{equation*}
	       \int_{M_T}|f_m-f_0|^2_h\leq\liminf_{j\to+\infty}\int_{M_T}|f_m-f_{k_j}|^2_h\leq\epsilon.
	\end{equation*}
	This shows that $\{f_j\}$ converges to $f_0$ under the topology of $A^2(M_T,h)$.
	
	Note that $(f_j)_{z_0}\in \mathcal{O}(K_M)_{z_0}\otimes J_{z_0}$ for any $j$ and $z_0\in Z_0$. According to Lemma \ref{l:converge}, we can get that $(f_0)_{z_0}\in \mathcal{O}(K_M)_{z_0}\otimes J_{z_0}$ for any $z_0\in Z_0$, which means that $f_0\in A^2(M_T,h)\cap J$. The we know that $A^2(M_T,h)\cap J$ is closed in $A^2(M_T,h)$.
\end{proof}

Note that when $Z_0=\emptyset$ (or $J_{z_0}=J(E,\Psi)_{z_0}$ for any $z_0\in Z_0$), Lemma \ref{Jclosed} implies that $A^2(M_T,h)$ is a Hilbert space.  

\begin{Corollary}
    $A^2(M_T,h)$ is a Hilbert space.
\end{Corollary}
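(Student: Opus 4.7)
The plan is to unpack what the proof of Lemma \ref{Jclosed} actually establishes and recognize that completeness of $A^2(M_T,h)$ falls out of it for free. Since $A^2(M_T,h)$ carries the inner product $\langle f,g\rangle := \int_{M_T}\{f,g\}_h$ coming from the pointwise hermitian pairing on $K_M\otimes E$, positive definiteness and the other inner product axioms are immediate from the fact that $h$ is a measurable metric that is a.e.\ positive definite. The only nontrivial point is completeness.

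The strategy is to instantiate Lemma \ref{Jclosed} with the trivial choice $Z_0=\emptyset$ (equivalently $J_{z_0}=J(E,\Psi)_{z_0}$ for every $z_0\in Z_0$), so that the pointwise module condition in the definition of $A^2(M_T,h)\cap J$ is vacuous and $A^2(M_T,h)\cap J=A^2(M_T,h)$. The hypothesis $J_{z_0}\supset I(h,\Psi)_{z_0}$ is then either vacuous or trivially satisfied. Although the statement of Lemma \ref{Jclosed} only records closedness of a subspace, its proof constructs, for every Cauchy sequence $\{f_j\}\subset A^2(M_T,h)\cap J$, a limit $f_0\in A^2(M_T,h)$ in the $L^2$ topology: uniform boundedness of $\int_{M_T}|f_j|_h^2$, followed by Lemma \ref{l:converge} combined with a diagonal exhaustion by Stein open subsets of $M_T$, produces a subsequence that converges locally uniformly to a holomorphic section $f_0$, and Fatou's lemma then gives both $f_0\in A^2(M_T,h)$ and $\|f_m-f_0\|_T\to 0$.

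In the corollary we simply repeat this recipe with no module constraint. Given a Cauchy sequence $\{f_j\}$ in $A^2(M_T,h)$, its $L^2$ norms are uniformly bounded; exhaust $M_T$ by a sequence of relatively compact Stein open sets $\{U_k\}$ and apply Lemma \ref{l:converge} on each $U_k$ together with a diagonal argument to extract a subsequence $\{f_{k_j}\}$ converging locally uniformly on $M_T$ to a holomorphic $(n,0)$ form $f_0$ with values in $E$. Fatou's lemma gives $\int_{M_T}|f_0|_h^2\leq\liminf_j\int_{M_T}|f_{k_j}|_h^2<+\infty$, hence $f_0\in A^2(M_T,h)$, and a second application of Fatou's lemma to $|f_m-f_{k_j}|_h^2$ together with the Cauchy property yields $\|f_m-f_0\|_T\to 0$ as $m\to\infty$.

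There is no real obstacle here: the only mild subtlety is that Lemma \ref{l:converge} is stated on a pseudoconvex domain in $\mathbb{C}^n$, so to cover the Stein manifold $M_T$ we rely on the exhaustion by relatively compact Stein coordinate-type opens and invoke the lemma locally before patching via the diagonal procedure. This is exactly the mechanism already used inside the proof of Lemma \ref{Jclosed}, so no new ideas are needed.
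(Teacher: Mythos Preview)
Your proposal is correct and matches the paper's approach exactly: the paper derives the corollary by taking $Z_0=\emptyset$ (or $J_{z_0}=J(E,\Psi)_{z_0}$) in Lemma~\ref{Jclosed}, and you have correctly identified that it is the \emph{proof} of that lemma, not merely its statement, which supplies completeness of $A^2(M_T,h)$.
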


\subsection{Some lemmas about functionals on $A^2(M_T,h)$}
\

The following two lemmas will be used in the proof of Theorem \ref{concavity}.

Let $M$ be an $n-$dimensional complex manifold. Let $E$ be a holomorphic vector bundle on $M$ with rank $r$. Let $\hat{h}$ be a smooth metric on $E$. Let $h$ be a measurable metric on $E$ satisfying that $h$ has a positive locally lower bound. Assume $(M,E,\Sigma, M_j, h,h_{j,s})$ is a singular metric on $E$, and $\Theta_{h}(E)\geq^s_{Nak} 0$.

\begin{Lemma}\label{fjtof0}
	Let $\{f_j\}$ be a sequence in $A^2(M,h)$, such that $\int_M|f_j|_h^2$ is uniformly bounded for any $j\in\mathbb{N}_+$. Assume that $f_j$ compactly converges to $f_0\in A^2(M,h)$. Then for any $\xi\in A^2(M,h)^*$,
	\[\lim_{j\rightarrow+\infty}\xi\cdot f_j=\xi\cdot f_0.\]
\end{Lemma}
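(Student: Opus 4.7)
The plan is to pass through the Riesz representation theorem and thereby reduce the claim to weak convergence in the Hilbert space $A^2(M,h)$. Since $A^2(M,h)$ is a Hilbert space (by the corollary above), every $\xi\in A^2(M,h)^*$ has the form $\xi\cdot f=\langle f,g\rangle$ for a unique $g\in A^2(M,h)$; hence it suffices to show that $\langle f_j,g\rangle\to\langle f_0,g\rangle$, i.e., that $f_j$ converges to $f_0$ weakly in $A^2(M,h)$.

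I would argue by a standard subsequence trick. Suppose the conclusion fails; then, after passing to a subsequence, we may assume $|\langle f_{j_k}-f_0,g\rangle|>\varepsilon>0$ for every $k$. Because $\{f_{j_k}\}$ is bounded in the Hilbert space $A^2(M,h)$, weak sequential compactness yields a further subsequence (still denoted $\{f_{j_k}\}$) converging weakly to some $f^{\ast}\in A^2(M,h)$. The problem is then reduced to proving $f^{\ast}=f_0$, after which $\langle f_{j_k},g\rangle\to\langle f^{\ast},g\rangle=\langle f_0,g\rangle$ gives the desired contradiction.

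The main step, and the expected source of difficulty, is identifying $f^{\ast}$ with $f_0$. I plan to do this by showing that point evaluation is a continuous linear functional on $A^2(M,h)$. Fix $z_0\in M$, a coordinate ball $B\ni z_0$ on which $E$ is trivial, and a smooth reference metric $h_1$. On $\overline{B}$ the hypothesis that $h$ has a positive local lower bound gives $h\geq C_B h_1$ for some $C_B>0$, so $\int_B|f|_{h_1}^2\leq C_B^{-1}\|f\|_{A^2(M,h)}^2$. Each component $f^{i}$ of $f$ in the trivialization is holomorphic, so the sub-mean-value property yields $|f^{i}(z_0)|^2\lesssim\int_B|f|_{h_1}^2\lesssim\|f\|_{A^2(M,h)}^2$. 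Thus coordinate-wise evaluation at $z_0$ is continuous on $A^2(M,h)$, so weak convergence $f_{j_k}\rightharpoonup f^{\ast}$ forces $f_{j_k}(z_0)\to f^{\ast}(z_0)$. Combined with the pointwise convergence $f_{j_k}(z_0)\to f_0(z_0)$ coming from compact convergence, we conclude $f^{\ast}(z_0)=f_0(z_0)$ for every $z_0\in M$, hence $f^{\ast}=f_0$, completing the plan.
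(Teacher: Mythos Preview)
Your proposal is correct and follows essentially the same route as the paper's proof: both reduce the claim to weak convergence in the Hilbert space $A^2(M,h)$, extract weakly convergent subsequences by boundedness, and identify the weak limit with $f_0$ by testing against the continuous point-evaluation functionals (continuity coming from the positive local lower bound on $h$). The only cosmetic difference is that you phrase the subsequence argument as a contradiction, while the paper uses the equivalent ``every subsequence has a further subsequence converging to the same limit'' formulation.
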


\begin{proof}
	For any $f\in A^2(M,h)$, denote that $\|f\|^2:=\int_M|f|^2_h$. Let $\{f_{k_j}\}$ be any subsequence of $\{f_j\}$. Since $A^2(M,h)$ is a Hilbert space, and $\|f_{k_j}\|^2$ is uniformly bounded, there exists a subsequence of $\{f_{k_j}\}$ (denoted by $\{f_{k_{l_j}}\}$) weakly convergent to some $\tilde{f}\in A^2(M,h)$. 
	
	Let $\{U_l\}$ be an open cover of the complex manifold $M$, such that $E|_{U_l}$ is trivial. Let $(U_l,w_l)$ be the local coordinate on each $U_l$, and $e_l=(e_{l,1},\ldots,e_{l,r})$ is a local section of $E$ on $U_l$. Then we may denote that $f_j=\sum_{k=1}^r f_{j,l,k}dw_l\otimes e_{l,k}$, $f_0=\sum_{k=1}^r g_{0,l,k}dw_l\otimes e_{l,k}$, and $\tilde{f}=\sum_{k=1}^r\tilde{g}_{l,k}dw_l\otimes e_{l,k}$ on each $U_l$, where $f_{j,l,k},g_{0,l,k}$ and $\tilde{g}_{l,k}$ are holomorphic functions on $U_l$. For any $z\in M$, denote that $S_z:=\{l : z\in U_l\}$. For any $l\in S_z$ and $k\in\{1,\ldots,r\}$, let $\xi_{z,l,k}$ be the functional defined as follows:
	\begin{flalign*}
		\begin{split}
			\xi_{z,l,k} \ : \ A^2(M,h)&\longrightarrow\mathbb{C}\\
			f&\longmapsto f_{l,k}(z),
		\end{split}
	\end{flalign*}
	where $f=\sum_{k=1}^rf_{l,k}dw_l\otimes e_{l,k}$ on $U_l$, and $f_{l,k}$ is a holomorphic function on $U_l$. It is clear that the functional $\xi_{z,l,k}\in A^2(M,h)^*$ for any $z\in M$, $l\in S_z$ and $k\in\{1,\ldots,r\}$, since $h$ has a positive locally lower bound. Then we have
	\[g_{0,l,k}(z)=\lim_{j\rightarrow+\infty}\xi_{z,l,k}\cdot f_j=\lim_{j\rightarrow+\infty}\xi_{z,l,k}\cdot f_{k_{l_j}}=\xi_{z,l}\cdot\tilde{f}=\tilde{g}_{l,k}(z), \ \forall z\in M, l\in S_z, 1\leq k\leq r,\]
	thus $f_0=\tilde{f}$. It means that $\{f_{k_j}\}$ has a subsequence weakly convergent to $f_0$. Since $\{f_{k_j}\}$ is an arbitrary subsequence of $\{f_j\}$, we get that $\{f_j\}$ weakly converges to $f_0$. In other words, for any $\xi\in A^2(M,h)^*$,
	\[\lim_{j\rightarrow+\infty}\xi\cdot f_j=\xi\cdot f_0.\]
\end{proof}

Let $\Omega:=M\times D$, where $M$ is an $n-$dimensional complex manifold, and $D$ is a domain in $\mathbb{C}$. Let $E$ be a holomorphic vector bundle on $M$ with rank $r$. Let $E':=E\boxtimes (D\times\mathbb{C})$ be a holomorphic vector bundle on $\Omega$, here $D\times\mathbb{C}$ is the trivial line bundle on $D$. Let $h$ be a measurable metric on $E$ satisfying that $h$ has a positive locally lower bound. Assume $(M,E,\Sigma, M_j, h,h_{j,s})$ is a singular metric on $E$, and $\Theta_{h}(E)\geq^s_{Nak} 0$. Let $f$ be an $E'-$valued holomorphic $(n+1,0)$ form on $\Omega$. For any $\tau\in D$, denote that
\[f_{\tau}:=\frac{f}{d\tau}|_{M_{\tau}}\]
is an $E'-$valued holomorphic $(n,0)$ form on $M_{\tau}$, where $M_{\tau}:=\pi_2^{-1}(\tau)$, and $\pi_2$ is the natural projection from $\Omega$ to $D$. Assume that
\[\int_D\left(\int_{M_{\tau}}|f_{\tau}|^2_h\right)d\lambda_D<+\infty,\]
where $\lambda_D$ is the Lebesgue measure on $D$.

\begin{Lemma}\label{xihol}
	For any $\xi\in A^2(M,h)^*$, $\xi\cdot f_{\tau}$ is holomorphic with respect to $\tau\in D$.
\end{Lemma}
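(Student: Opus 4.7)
The plan is to verify the Cauchy-Morera criterion for $\tau\mapsto\xi\cdot f_\tau$: show it is continuous on $D$ and that its contour integral along every small closed loop in $D$ vanishes. Both pieces rest on a local $L^2$-bound for the slices $f_\tau$, which I will establish via a sub-mean-value inequality and then feed into Lemma \ref{fjtof0}.

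\emph{Local $L^2$-bound for slices.} Fix $\tau_0\in D$ and a radius $\rho>0$ with $\Delta_\rho(\tau_0)\subset\subset D$. In a local trivialization of $E$ and local coordinates on $M$, write $f=g(z,\tau)\,dz_1\wedge\cdots\wedge dz_n\wedge d\tau\otimes e$ with $g$ a $\mathbb{C}^r$-valued holomorphic function; then $f_\tau$ is represented by the $\tau$-holomorphic map $z\mapsto g(z,\tau)$. For almost every $z\in M$, $h(z)$ is a positive-definite hermitian form, and writing $h(z)=A(z)^*A(z)$ realises $|f_\tau(z)|_h^2$ as a positive $(n,n)$-form times $\sum_m|(A(z)g(z,\tau))_m|^2$, which is plurisubharmonic in $\tau$. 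The sub-mean-value inequality together with Fubini then gives
\[
\int_M|f_{\tau_0}|_h^2\;\leq\;\frac{1}{\pi\rho^2}\int_{\Delta_\rho(\tau_0)}\!\!\int_{M_\tau}|f_\tau|_h^2\,d\lambda_D(\tau)\;<\;+\infty,
\]
using the standing integrability hypothesis on $f$. The same argument yields a uniform bound $\sup_{\tau\in\Delta_{\rho/2}(\tau_0)}\|f_\tau\|_{A^2(M,h)}<+\infty$, so in particular $f_\tau\in A^2(M,h)$ for every $\tau\in D$.

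\emph{Continuity and Morera.} For $\tau_n\to\tau_0$ in $D$, holomorphy of $f$ on $\Omega$ gives $f_{\tau_n}\to f_{\tau_0}$ uniformly on compact subsets of $M$, while the previous step provides a uniform $A^2$-bound on $\{f_{\tau_n}\}$. Lemma \ref{fjtof0} then yields $\xi\cdot f_{\tau_n}\to\xi\cdot f_{\tau_0}$, so $\tau\mapsto\xi\cdot f_\tau$ is continuous on $D$. To verify Morera, fix a small closed contour $\gamma$ bounding a disc in $D$ and consider Riemann sums $S_k=\sum_j f_{\tau_j^{(k)}}(\tau_{j+1}^{(k)}-\tau_j^{(k)})\in A^2(M,h)$ along an increasingly fine partition of $\gamma$. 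By the local bound, $\{S_k\}$ is uniformly $A^2$-bounded; pointwise on $M$, $S_k(z)\to\oint_\gamma f_\tau(z)\,d\tau=0$ by Cauchy's theorem applied to the holomorphic map $\tau\mapsto g(z,\tau)$, and the convergence is uniform on compact subsets of $M$. Lemma \ref{fjtof0} therefore gives $\xi\cdot S_k\to 0$, while by linearity and continuity $\xi\cdot S_k=\sum_j (\xi\cdot f_{\tau_j^{(k)}})(\tau_{j+1}^{(k)}-\tau_j^{(k)})$ is a Riemann sum of the continuous integrand $\xi\cdot f_\tau$, converging to $\oint_\gamma\xi\cdot f_\tau\,d\tau$. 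Consequently $\oint_\gamma\xi\cdot f_\tau\,d\tau=0$ for every such contour, and Morera's theorem concludes that $\xi\cdot f_\tau$ is holomorphic on $D$.

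\emph{Main obstacle.} The nontrivial input is the local $L^2$-bound for slices: promoting the $L^1$-in-$\tau$ integrability of $\|f_\tau\|_{A^2(M,h)}^2$ to a locally uniform pointwise bound. This is where the plurisubharmonicity of $|f_\tau(z)|_h^2$ in $\tau$ (at each fixed $z$ where $h(z)$ is positive definite) enters essentially; once it is in hand, continuity and the Morera step reduce to formal applications of Lemma \ref{fjtof0}.
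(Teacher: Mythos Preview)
Your proof is correct and takes a genuinely different route from the paper. Both arguments open the same way: establish the local uniform bound $\sup_{\tau\in\Delta_{\rho/2}(\tau_0)}\|f_\tau\|_{A^2(M,h)}<\infty$ via the sub-mean-value inequality applied to the subharmonic (in $\tau$, for a.e.\ $z$) function $|f_\tau(z)|_h^2$, combined with Fubini. From there the paper proceeds by a \emph{density argument in the dual}: it shows that the span of the point-evaluation functionals $\xi_{z,l,k}$ (via their Riesz representatives $\phi_{z,l,k}$) is dense in $A^2(M,h)^*$, observes that each $\xi_{z,l,k}\cdot f_\tau$ is trivially holomorphic in $\tau$, and then obtains holomorphy of $\xi\cdot f_\tau$ as a locally uniform limit of holomorphic functions via Weierstrass. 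You instead run \emph{Morera directly}, reducing both continuity and the vanishing of contour integrals to two invocations of Lemma~\ref{fjtof0}: once for $f_{\tau_n}\to f_{\tau_0}$, once for the Riemann sums $S_k\to 0$. Your route is more self-contained in that it treats $\xi$ as an opaque bounded functional and never needs to exhibit a dense family of ``tautologically holomorphic'' functionals; the paper's route has the advantage that the approximating functions $h_j(\tau)=\xi_j\cdot f_\tau$ are manifestly holomorphic, so no Riemann-sum bookkeeping is required. Both arguments are of comparable length and rest on the same analytic core (the slice bound plus weak convergence in $A^2(M,h)$).
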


\begin{proof}
	We only need to prove that $h(\tau):=\xi\cdot f_{\tau}$ is holomorphic near any $\tau_0\in D$. Since $\tau_0\in  D$, there exists $r>0$ such that $\Delta(\tau_0,2r)\subset\subset D$. Then for any $\tau\in\Delta(\tau_0,r)$, according to sub-mean value inequality of subharmonic functions, we have
	\[\int_M|f_{\tau}|^2_h\leq \frac{1}{\pi r^2}\int_{\Delta(\tau,r)}\left(\int_M|f|^2_h\right)d\lambda_D<+\infty,\]
	which implies that $f_{\tau}\in A^2(M,h)$ and there exists $C>0$ such that $\int_M|f_{\tau}|^2_h\leq C$ for any $\tau\in\Delta(\tau_0,r)$.
	
	Let $\{U_l\}$ be an open cover of the complex manifold $M$, and $(U_l,w_l)$ be the local coordinate on each $U_l$.
	For any $z\in M$, Denote that $S_z:=\{l : z\in U_l\}$. And for any $l\in S_z$, $k\in\{1,\ldots,r\}$, let $\xi_{z,l,k}$ be the functional in the proof of Lemma \ref{fjtof0}. In the Hilbert space $A^2(M,h)$, by Riesz representation theorem, there exists $\phi_{z,l,k}\in A^2(M,h)$ such that
	\[\xi_{z,l,k}\cdot g=\sqrt{-1}^{n^2}\int_M \langle g, \phi_{z,l,k}\rangle_h\]
	for any $z\in M$, $l\in S_z$, $k\in\{1,\ldots,r\}$. Denote that
	\[H:=\overline{\text{span}\{\phi_{z,l,k} : z\in M, l\in S_z, 1\leq k\leq r\}}\]
	is a closed subspace of $A^2(M,h)$. If $H\neq A^2(M,h)$, then the closed subspace $H^{\bot}\neq\{0\}$. Choosing some $g_0\in H^{\bot}$ with $g_0\neq 0$, we have that for any $z\in M$, $l\in S_z$, and $k\in\{1,\ldots,r\}$,
	$\xi_{z,l,k}\cdot g_0=0$ holds. Then it is clear that $g_0=0$, which is a contradiction. Thus $H=A^2(M,h)$. Denote that
	\[L:=\text{span}\{\xi_{z,l,k} : z\in M, l\in S_z,1\leq k\leq r\}\subset A^2(M,h)^*.\]
	Since $H=A^2(M,h)$, we can find a sequence $\{\xi_j\}\subset L\subset A^2(M,h)^*$, such that
	\[\lim_{k\rightarrow+\infty}\|\xi_j-\xi\|_{A^2(M,h)^*}=0.\]
	It is clear that for any $z\in M$, $l\in S_z$ and $k\in\{1,\ldots,r\}$, $\xi_{z,l,k}\cdot f_{\tau}$ is holomorphic with respect to $\tau\in D$. Then for any $k$, $h_j(\tau):=\xi_j\cdot f_{\tau}$ is holomorphic with respect to $\tau\in D$.
	Besides, for any $\tau\in\Delta(\tau_0,r)$, we have
	\begin{flalign*}
		\begin{split}
			&|h_j(\tau)-h(\tau)|^2\\
			=&|(\xi_j-\xi)\cdot f_{\tau}|^2\\
			\leq&\|\xi_j-\xi\|^2_{A^2(M,h)^*}\int_M|f_{\tau}|^2_h\\
			\leq&C\|\xi_j-\xi\|^2_{A^2(M,h)^*},
		\end{split}
	\end{flalign*}
	which means that $h_j$ uniformly converges to $h$ on $\Delta(\tau_0,r)$. According to Weierstrass theorem, we know that $h$ is holomorphic on $\Delta(\tau_0,r)$, i.e. near $\tau_0$. Then we get that $\xi\cdot f_{\tau}$ is holomorphic with respect to $\tau\in D$.
\end{proof}

\subsection{Some properties of $K^{h}_{\xi,\Psi,\lambda}(t)$}
\

In this section, we prove some properties of the Bergman kernel $K^{h}_{\xi,\Psi,\lambda}(t)$.

Let $\xi\in A^2(M_T,h)^*\setminus\{0\}$. We need the following lemma.

\begin{Lemma}\label{sup=max}
	For any $t\in [T,+\infty)$, if $K^{h}_{\xi,\Psi,\lambda}(t)\in (0,+\infty)$, then there exists $\tilde{f}\in A^2(M_T,h)$, such that
	\[K^{h}_{\xi,\Psi,\lambda}(t)=\frac{|\xi\cdot \tilde{f}|^2}{\|\tilde{f}\|_{\lambda,t}^2}.\]
\end{Lemma}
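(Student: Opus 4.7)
The plan is the usual maximizing-sequence argument in a Hilbert space, combined with the compactness and lower-semicontinuity tools already assembled in Section 2.

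First I would take a maximizing sequence $\{f_j\}\subset A^2(M_T,h)$ with $\|f_j\|_{\lambda,t}=1$ and $|\xi\cdot f_j|^2\to K^{h}_{\xi,\Psi,\lambda}(t)$. Using the two-sided comparison $e^{\lambda(T-t)}\|f\|_T^2\leq\|f\|^2_{\lambda,t}\leq\|f\|_T^2$ noted after the definition of $\|\cdot\|_{\lambda,t}$, the normalization $\|f_j\|_{\lambda,t}=1$ forces $\|f_j\|_T^2\leq e^{\lambda(t-T)}$, so $\{f_j\}$ is uniformly bounded in the Hilbert space $A^2(M_T,h)$.

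Next, I would apply Lemma \ref{l:converge} together with the diagonal argument already used in the proof of Lemma \ref{Jclosed} (with $J_{z_0}=J(E,\Psi)_{z_0}$, i.e.\ no module condition) to extract a subsequence $\{f_{k_j}\}$ that converges compactly on $M_T$ to some $\tilde f\in A^2(M_T,h)$ with $\|\tilde f\|_T^2\leq\liminf_j\|f_{k_j}\|_T^2<+\infty$. Then Lemma \ref{fjtof0} (applied on $M_T$, with its weak-convergence conclusion) gives $\xi\cdot f_{k_j}\to\xi\cdot\tilde f$, so $|\xi\cdot\tilde f|^2=K^{h}_{\xi,\Psi,\lambda}(t)$.

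It remains to check $\|\tilde f\|_{\lambda,t}\leq 1$. Since $\Psi_{\lambda,t}$ is a bounded nonnegative function, the integrand $|f_{k_j}|_h^2 e^{-\Psi_{\lambda,t}}$ is comparable to $|f_{k_j}|_h^2$, and compact convergence of $\{f_{k_j}\}$ together with Fatou's lemma yields
\[\|\tilde f\|_{\lambda,t}^2\leq\liminf_{j\to+\infty}\|f_{k_j}\|_{\lambda,t}^2=1.\]
Combining these two facts,
\[K^{h}_{\xi,\Psi,\lambda}(t)=|\xi\cdot\tilde f|^2\leq K^{h}_{\xi,\Psi,\lambda}(t)\,\|\tilde f\|_{\lambda,t}^2\leq K^{h}_{\xi,\Psi,\lambda}(t),\]
so all the inequalities are equalities, $\|\tilde f\|_{\lambda,t}=1$, and $\tilde f$ attains the supremum; note $\tilde f\neq 0$ because $K^{h}_{\xi,\Psi,\lambda}(t)>0$.

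The only subtle point is making sure the limit $\tilde f$ actually lives in $A^2(M_T,h)$ and that the functional $\xi$ passes through the limit. Both are handled by the lemmas already available: Lemma \ref{l:converge} produces a pointwise limit with controlled $L^2_h$ norm, and Lemma \ref{fjtof0} promotes compact convergence of an $L^2$-bounded sequence to weak convergence, on which every $\xi\in A^2(M_T,h)^*$ is continuous. So the argument is essentially the standard Banach--Alaoglu / lower-semicontinuity routine, dressed in the vector-bundle language of this paper.
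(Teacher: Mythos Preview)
Your proposal is correct and follows essentially the same route as the paper's proof: take a maximizing sequence normalized so that $\|f_j\|_{\lambda,t}=1$, use the norm comparison to get a uniform $\|\cdot\|_T$-bound, extract a compactly convergent subsequence, apply Fatou to get $\|\tilde f\|_{\lambda,t}\le 1$, and use Lemma~\ref{fjtof0} to pass $\xi$ through the limit. The only cosmetic difference is that the paper invokes Montel's theorem directly for the compactness step, whereas you route it through Lemma~\ref{l:converge} plus a diagonal argument; since Lemma~\ref{l:converge} is itself a packaged Montel-type statement, this is the same argument.
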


\begin{proof}
	By the definition of $K^{h}_{\xi,\Psi,\lambda}(t)$, there exists a sequence $\{f_j\}$ of $E-$valued holomorphic $(n,0)$ forms in $A^2(M_T,h)$, such that $\|f_j\|_{\lambda,t}=1$, and $\lim_{j\rightarrow+\infty}|\xi\cdot f_j|^2=K^{h}_{\xi,\Psi,\lambda}(t)$. Then $\int_{M_T}|f_j|^2_h$ is uniformly bounded. Following from Montel's theorem, we can get a subsequence of $\{f_j\}$ compactly convergent to an $E-$valued holomorphic $(n,0)$ form $\tilde{f}$ on $M_T$. According to Fatou's lemma, we have $\|\tilde{f}\|_{\lambda,t}\leq 1$, and according to Lemma \ref{fjtof0}, we have $|\xi\cdot \tilde{f}|^2=K^{h}_{\xi,\Psi,\lambda}(t)$, thus $K^{h}_{\xi,\Psi,\lambda}(t)\leq\frac{|\xi\cdot \tilde{f}|^2}{\|\tilde{f}\|_{\lambda,t}^2}$. Note that $\|\tilde{f}\|_{\lambda,t}\leq 1$ implies $\tilde{f}\in A^2(M_T,h)$, which means $K^{h}_{\xi,\Psi,\lambda}(t)\geq\frac{|\xi\cdot \tilde{f}|^2}{\|\tilde{f}\|_{\lambda,t}^2}$. Then we get that $K^{h}_{\xi,\Psi,\lambda}(t)=\frac{|\xi\cdot \tilde{f}|^2}{\|\tilde{f}\|_{\lambda,t}^2}$.
\end{proof}

Recall that $Z_0$ is a subset of $M$, and $J_{z_0}$ is an $\mathcal{O}_{M,z_0}-$submodule of $J(E,\Psi)_{z_0}$ such that $I(h,\Psi)_{z_0}\subset J_{z_0}$ for any $z_0\in Z_0$. For any $t\geq T$, recall that
\[A^2(M_t,h)\cap J:=\left\{f\in A^2(M_t,h) :  f_{z_0}\in\mathcal{O}(K_M)_{z_0}\otimes J_{z_0}, \text{for\ any\ } z_0\in Z_0\right\}.\]
Following from Lemma \ref{Jclosed}, we know that $A^2(M_T,h)\cap J$ is a closed subspace of $A^2(M_T,h)$. Let $f\in A^2(M_T,h)$, such that $f\notin A^2(M_T,h)\cap J$. Recall the minimal $L^2$ integral (\cite{GMY5}) related to $J$ as follows:
\begin{flalign*}
	\begin{split}
		C(\Psi,h,J,f,M_T):=\inf\bigg\{\int_{M_T}|\tilde{f}|^2_h :  (\tilde{f}-f)_{z_0}\in (\mathcal{O}(K_M))_{z_0}\otimes J_{z_0}  \text{for\ any\ } z_0\in Z_0&\\
		\& \ \tilde{f}\in H^0(M_T,\mathcal{O}(K_M\otimes E))&\bigg\}.
	\end{split}
\end{flalign*}
Then the following lemma holds.

\begin{Lemma}\label{B=C}
	Assume that $C(\Psi,h,J,f,M_T)\in (0,+\infty)$, then
	\begin{equation}
		C(\Psi,h,J,f,M_T)=\sup_{\substack{\xi\in A^2(M_T,h)^*\setminus\{0\}\\ \xi|_{A^2(M_T,h)\cap J}\equiv 0}}\frac{|\xi\cdot f|^2}{K^{h}_{\xi,\Psi,\lambda}(T)}.
	\end{equation}
\end{Lemma}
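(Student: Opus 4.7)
The key observation is that at $t=T$, the weight $\Psi_{\lambda,T}$ vanishes, so $\|\cdot\|_{\lambda,T} = \|\cdot\|_T$ is simply the norm of the Hilbert space $A^2(M_T,h)$. In particular, for any $\xi\in A^2(M_T,h)^*$,
\[K^{h}_{\xi,\Psi,\lambda}(T)=\sup_{g\in A^2(M_T,h)\setminus\{0\}}\frac{|\xi\cdot g|^2}{\|g\|_T^2}=\|\xi\|_{A^2(M_T,h)^*}^{2}.\]
So the identity I must prove is a pure Hilbert-space duality statement: the infimum of $\|\tilde f\|_T^2$ over the affine subspace $\{\tilde f : (\tilde f-f)_{z_0}\in\mathcal{O}(K_M)_{z_0}\otimes J_{z_0}\ \forall z_0\in Z_0\}$ equals the supremum of $|\xi\cdot f|^2/\|\xi\|^2$ over functionals vanishing on a certain closed subspace.

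The plan is to set $V:=A^2(M_T,h)\cap J$, which is a closed linear subspace of the Hilbert space $A^2(M_T,h)$ by Lemma~\ref{Jclosed}. First I would observe that any $\tilde f$ with $\|\tilde f\|_T^2<+\infty$ in the defining infimum automatically lies in $A^2(M_T,h)$, hence $\tilde f-f\in V$, i.e.\ $\tilde f$ ranges over the affine subspace $f+V$. Consequently $C(\Psi,h,J,f,M_T)$ equals the squared distance from $0$ to $f+V$, which by orthogonal projection equals $\|P_{V^\perp}f\|_T^2$. In particular, writing $f_1:=P_{V^\perp}f$, the hypothesis $C\in(0,+\infty)$ guarantees $f_1\neq 0$.

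Next I would analyze the right-hand side via the Riesz representation theorem. Every $\xi\in A^2(M_T,h)^*$ can be written as $\xi\cdot g=\langle g,v_\xi\rangle$ for a unique $v_\xi\in A^2(M_T,h)$ with $\|\xi\|^2=\|v_\xi\|_T^2$, and the condition $\xi|_V\equiv0$ is precisely $v_\xi\in V^\perp$. For such $\xi$, since $f-f_1\in V$, we have
\[\frac{|\xi\cdot f|^2}{K^{h}_{\xi,\Psi,\lambda}(T)}=\frac{|\langle f,v_\xi\rangle|^2}{\|v_\xi\|_T^2}=\frac{|\langle f_1,v_\xi\rangle|^2}{\|v_\xi\|_T^2}\leq\|f_1\|_T^2=C(\Psi,h,J,f,M_T),\]
by Cauchy--Schwarz. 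This gives the $\geq$ inequality in the statement.

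Finally, to achieve the supremum I would exhibit an explicit extremizer: take $v_\xi:=f_1\in V^\perp\setminus\{0\}$ and let $\xi_0$ be the corresponding functional. Then $\xi_0|_V\equiv 0$, $\xi_0\cdot f=\langle f_1,f_1\rangle=\|f_1\|_T^2$, and $\|\xi_0\|^2=\|f_1\|_T^2$, producing ratio exactly $\|f_1\|_T^2=C(\Psi,h,J,f,M_T)$. No step should present a serious obstacle; the only subtlety is verifying that the optimization in the definition of $C$ may be restricted to $\tilde f\in A^2(M_T,h)$ and that $V$ is closed (which is precisely the content of Lemma~\ref{Jclosed} and is why the hypothesis $I(h,\Psi)_{z_0}\subset J_{z_0}$ is needed).
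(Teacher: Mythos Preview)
Your proposal is correct and follows essentially the same approach as the paper's proof: both use the orthogonal decomposition of $f$ with respect to the closed subspace $V=A^2(M_T,h)\cap J$ and exhibit the extremizing functional via the Riesz representative $f_1=P_{V^\perp}f$ (denoted $f_H$ in the paper). Your presentation is slightly more streamlined in that you identify $K^{h}_{\xi,\Psi,\lambda}(T)=\|\xi\|^2$ at the outset and recognize directly that $C=\|f_1\|_T^2$, whereas the paper keeps the Bergman-kernel notation throughout and only uses the inequality $\|f_H\|_T^2\geq C$; but these are cosmetic differences, not substantive ones.
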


\begin{proof}
	Denote that $(\tilde{f}-f)\in J$ if $(\tilde{f}-f)_{z_0}\in (\mathcal{O}(K_M))_{z_0}\otimes J_{z_0}$ for any $z_0\in Z_0$. Note that $\xi\cdot\tilde{f}=\xi\cdot f$ for any $\tilde{f}\in A^2(M_T,h)$ with $(\tilde{f}-f)\in J$, and $\xi\in A^2(M_T,h)^*$ satisfying $\xi|_{A^2(M_T,h)\cap J}\equiv 0$. Then we have
	\begin{flalign*}
		\begin{split}
			K^{h}_{\xi,\Psi,\lambda}(T)&=\sup_{g\in A^2(M_T,h)}\frac{|\xi\cdot g|^2}{\int_{M_T}|g|^2_h}\\
			&\geq\sup_{\substack{\tilde{f}\in A^2(M_T,h)\\ (\tilde{f}-f)\in J}}\frac{|\xi\cdot \tilde{f}|^2}{\int_{M_T}|\tilde{f}|^2_h}\\
			&= \sup_{\substack{\tilde{f}\in A^2(M_T,h)\\ (\tilde{f}-f)\in J}}\frac{|\xi\cdot f|^2}{\int_{M_T}|\tilde{f}|^2_h}.
		\end{split}
	\end{flalign*}
	Thus we get that
	\begin{flalign*}
		\begin{split}
			&\sup_{\substack{\xi\in A^2(M_T,h)^*\setminus\{0\}\\ \xi|_{A^2(M_T,h)\cap J}\equiv 0}}\frac{|\xi\cdot f|^2}{K^{h}_{\xi,\Psi,\lambda}(T)}\\
			\leq&\inf_{\substack{\tilde{f}\in A^2(M_T,h)\\ (\tilde{f}-f)\in J}}\int_{M_T}|\tilde{f}|^2_h\\
			=&C(\Psi,h,J,f,M_T).
		\end{split}
	\end{flalign*}
	
	Since $A^2(M_T,h)$ is a Hilbert space, and $A^2(M_T,h)\cap J$ is a closed proper subspace of $A^2(M_T,h)$, there exists a closed subspace $H$ of $A^2(M_T,h)$ such that $H=(A^2(M_T,h)\cap J)^{\bot}\neq\{0\}$. Then for $f\in A^2(M_T,h)$, we can make the decomposition $f=f_J+f_H$, such that $f_J\in A^2(M_T,h)\cap J$, and $f_H\in H$. Note that the linear functional $\xi_f$ defined as follows:
	\[\xi_f\cdot g:=\int_{M_T}\langle g,f_H\rangle_h, \ \forall g\in A^2(M_T,h),\]
	satisfies that $\xi_f\in A^2(M_T,h)^*\setminus\{0\}$ and $\xi_f|_{A^2(M_T,h)\cap J}\equiv 0$. Then we have
	\[\sup_{\substack{\xi\in A^2(M_T,h)^*\setminus\{0\}\\ \xi|_{A^2(M_T,h)\cap J}\equiv 0}}\frac{|\xi\cdot f|^2}{K^{h}_{\xi,\Psi,\lambda}(T)}\\
	\geq\frac{|\xi_f\cdot f|^2}{K^{h}_{\xi_f,\Psi,\lambda}(T)}.\]
	Besides, we can know that
	\[K^{h}_{\xi_f,\Psi,\lambda}(T)=\sup_{u\in A^2(M_T,h)}\frac{|\int_{M_T} \langle u,f_H\rangle_h|^2}{\int_{M_T}|u|^2_h}\leq\int_{M_T}|f_H|^2_h,\]
	and
	\[\xi_f\cdot f=\xi_f\cdot (f_J+f_H)=\xi_f\cdot f_H=\int_{M_T}|f_H|^2_h.\]
	Then we have
	\[\frac{|\xi_f\cdot f|^2}{K^{h}_{\xi_f,\Psi,\lambda}(T)}\geq\int_{M_T}|f_H|^2_h\geq C(\Psi,h,J,f,M_T),\]
	which implies that
	\[\sup_{\substack{\xi\in A^2(M_T,h)^*\setminus\{0\}\\ \xi|_{A^2(M_T,h)\cap J}\equiv 0}}\frac{|\xi\cdot f|^2}{K^{h}_{\xi,\Psi,\lambda}(T)}\\
	\geq C(\Psi,h,J,f,M_T).\]
	
	Lemma \ref{B=C} is proved.
\end{proof}

Let $\xi\in A^2(M_T,h)^*$, and recall that the Bergman kernel related to $\xi$ is
\[K^{h}_{\xi,\Psi,\lambda}(t):=\sup_{f\in A^2(M_T,h)}\frac{|\xi\cdot f|^2}{\|f\|^2_{\lambda,t}}\]
for any $t\in [T,+\infty)$ and $\lambda>0$. We state the following Lemma.

\begin{Lemma}\label{upper-semi}
	$K^{h}_{\xi,\Psi,\lambda}(t)$ is upper-semicontinuous with respect to $t\in[T,+\infty)$, i.e., for any sequence $\{t_j\}_{j=1}^{\infty}$ in $[T,+\infty)$ such that $\lim_{j\rightarrow+\infty}t_j=t_0\in [T,+\infty)$, we have
	\[\limsup_{j\rightarrow+\infty}K^{h}_{\xi,\Psi,\lambda}(t_j)\leq K^{h}_{\xi,\Psi,\lambda}(t_0).\]
\end{Lemma}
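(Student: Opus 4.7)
The plan is to extract a sequence of near-extremals, pass to a compact limit, and apply Fatou's lemma to the varying-weight norms. Set $\alpha := \limsup_{j \to +\infty} K^{h}_{\xi,\Psi,\lambda}(t_j)$. The statement is trivial when $\alpha = 0$, so assume $\alpha > 0$, and after passing to a subsequence assume $K^{h}_{\xi,\Psi,\lambda}(t_j) \to \alpha \in (0,+\infty]$. For each $j$, choose $f_j \in A^2(M_T,h)$ with $\|f_j\|_{\lambda,t_j} = 1$ and
\[|\xi \cdot f_j|^2 \geq \min\{K^{h}_{\xi,\Psi,\lambda}(t_j),\, j\} - 1/j,\]
so that $|\xi \cdot f_j|^2 \to \alpha$.

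The inequality $e^{\lambda(T-t)}\|f\|_T^2 \leq \|f\|_{\lambda,t}^2$ recalled in Section \ref{main} yields $\|f_j\|_T^2 \leq e^{\lambda(t_j - T)}$, which is uniformly bounded in $j$ since $t_j \to t_0$. Arguing as in the proof of Lemma \ref{sup=max} (Montel's theorem applied to the uniformly $L^2$-bounded sequence, combined with Fatou's lemma for membership in $A^2(M_T,h)$), a subsequence of $\{f_j\}$ converges compactly on $M_T$ to some $f_0 \in A^2(M_T,h)$. Since $\Psi_{\lambda,t} = \lambda\max\{\Psi + t,0\}$ is $\lambda$-Lipschitz in $t$ uniformly in the space variable, $\Psi_{\lambda,t_j} \to \Psi_{\lambda,t_0}$ pointwise on $M_T$, and therefore the nonnegative integrands $|f_j|_h^2\, e^{-\Psi_{\lambda,t_j}}$ converge almost everywhere on $M_T$ to $|f_0|_h^2\, e^{-\Psi_{\lambda,t_0}}$. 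Fatou's lemma then gives
\[\|f_0\|_{\lambda,t_0}^2 \;\leq\; \liminf_{j \to +\infty} \|f_j\|_{\lambda,t_j}^2 \;=\; 1.\]

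By Lemma \ref{fjtof0}, $\xi \cdot f_j \to \xi \cdot f_0$, so $|\xi \cdot f_0|^2 = \alpha$. Since $f_0 \in A^2(M_T,h)$ and $\xi \in A^2(M_T,h)^*$, this forces $\alpha < +\infty$; moreover $\alpha > 0$ implies $f_0 \not\equiv 0$ and hence $\|f_0\|_{\lambda,t_0}^2 > 0$, so
\[K^{h}_{\xi,\Psi,\lambda}(t_0) \;\geq\; \frac{|\xi \cdot f_0|^2}{\|f_0\|_{\lambda,t_0}^2} \;\geq\; |\xi \cdot f_0|^2 \;=\; \alpha,\]
as required. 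The main obstacle is the limiting step: one must combine the uniform $A^2$-bound $\|f_j\|_T^2 \leq e^{\lambda(t_j-T)}$ with Montel to extract a compactly convergent subsequence, and then verify that the varying exponents $\Psi_{\lambda,t_j}$ converge pointwise to $\Psi_{\lambda,t_0}$ strongly enough for the Fatou estimate on the weighted integrals $\|f_j\|_{\lambda,t_j}^2$ to be legitimate, which is where the elementary Lipschitz dependence of $\Psi_{\lambda,t}$ on $t$ is essential.
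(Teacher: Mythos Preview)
Your argument is correct and follows essentially the same route as the paper's proof: pass to a subsequence realizing the $\limsup$, pick (near-)extremizers $f_j$ with $\|f_j\|_{\lambda,t_j}=1$, use the equivalence $e^{\lambda(T-t)}\|f\|_T^2\le\|f\|_{\lambda,t}^2$ to get a uniform $A^2(M_T,h)$-bound, extract a compactly convergent subsequence via Montel, apply Fatou's lemma to the varying weights to get $\|f_0\|_{\lambda,t_0}\le 1$, and finish with Lemma~\ref{fjtof0}. The only cosmetic difference is that the paper invokes Lemma~\ref{sup=max} to pick exact maximizers $f_j$ with $|\xi\cdot f_j|^2=K^{h}_{\xi,\Psi,\lambda}(t_j)$, whereas you take near-extremizers directly; your variant avoids the a~priori appeal to Lemma~\ref{sup=max} and also handles the hypothetical case $\alpha=+\infty$ (which in fact cannot occur, since $K^{h}_{\xi,\Psi,\lambda}(t)\le e^{\lambda(t-T)}\|\xi\|^2$), but the substance is identical.
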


\begin{proof}
	
	Denote that
	\[K(t):=K^{h}_{\xi,\Psi,\lambda}(t)\]
	for any $t\in [T,+\infty)$. It can be seen that
	\[e^{\lambda(s-t)}\|f\|^2_{\lambda,s}\leq\|f\|_{\lambda,t}^2\leq\|f\|_{\lambda,s}^2\]
	for any $t>s\geq T$ and $f\in A^2(M_T,h)$. Note that $K(s)=0$ for some $s\geq T$ induces $K(t)=0$ for any $t\geq T$. Then it suffices to prove Lemma \ref{upper-semi} for $K(t_0)\in (0,+\infty)$ and $K(t_j)\in (0,+\infty)$, $\forall j\in\mathbb{N}_+$.
	
	We assume that $\{t_{k_j}\}$ is the subsequence of $\{t_j\}$ such that
	\[\lim_{j\rightarrow+\infty}K(t_{k_j})=\limsup_{j\rightarrow+\infty}K(t_j).\]
	By Lemma \ref{sup=max}, there exists a sequence of $E-$valued holomorphic $(n,0)$ forms $\{f_j\}$ on $M_T$ such that $f_j\in A^2(M_T,h)$, $\|f_j\|_{\lambda,t_j}=1$, and $|\xi\cdot f_j|^2=K(t_j)$, for any $j\in\mathbb{N}_+$. Since $\{t_j\}$ is bounded in $\mathbb{C}$, there exists some $s_0<+\infty$, such that $t_j<s_0$ for any $j$, which implies that
	\[\int_{M_T}|f_j|_h^2\leq e^{\lambda (s_0-T)}\|f_j\|^2_{\lambda,t_j}=e^{\lambda (s_0-T)}, \ \forall j\in\mathbb{N}_+.\]
	Then following from Montel's theorem, we can get a subsequence of $\{f_{k_j}\}$ (denoted by $\{f_{k_j}\}$ itself) compactly convergent to an $E-$valued holomorphic $(n,0)$ form $f_0$ on $M_T$. According to Fatou's lemma, we have
	\begin{flalign*}
		\begin{split}
			\|f_0\|_{\lambda,t_0}&=\int_{M_T}|f_0|_h^2e^{-\lambda\max\{\Psi+t_0,0\}}\\
			&=\int_{M_T}\lim_{j\rightarrow+\infty}|f_{k_j}|_h^2e^{-\lambda\max\{\Psi+t_{k_j},0\}}\\
			&\leq\liminf_{j\rightarrow+\infty}\int_{M_T}|f_{k_j}|_h^2e^{-\lambda\max\{\Psi+t_{k_j},0\}}\\
			&=\liminf_{j\rightarrow+\infty}\|f_{k_j}\|_{\lambda,t_j}=1.
		\end{split}
	\end{flalign*}
	Then $\int_{M_T}|f_0|_h^2\leq e^{\lambda (t_0-T)} \|f_0\|^2_{\lambda,t_0}\leq e^{\lambda (s_0-T)}<+\infty$, which implies that $f_0\in A^2(M_T,h)$. Lemma \ref{fjtof0} shows that $|\xi\cdot f_0|^2=\lim_{j\rightarrow+\infty}|\xi\cdot f_{k_j}|^2=\limsup_{j\rightarrow+\infty}K(t_j)$. Thus
	\[K(t_0)\geq\frac{|\xi\cdot f_0|^2}{\|f_0\|^2_{\lambda,t_0}}\geq\limsup_{j\rightarrow+\infty}K(t_j),\]
	which means that $K(t)$ is upper semi-continuous with respect to $t\in [T,+\infty)$.
\end{proof}

\section{Proof of Theorem \ref{concavity}}

We prove Theorem \ref{concavity} by using Lemma \ref{L2ext}.

\begin{proof}[Proof of Theorem \ref{concavity}]
	Denote that $\Omega:=M_T\times U_T$. Denote that $\pi_1,\pi_2$ are the natural projections from $\Omega$ to $M_T$ and $U_T$. Let $E':=\pi_1^*(M_T)$ be a vector bundle on $\Omega$. Let
	\[\tilde{\Psi}:=\lambda\max\{\Psi(z)+\text{Re\ }w,0\}\]
	for any $(z,w)\in \Omega$ with $z\in M_T$ and $w\in U_T$. Then $\tilde{\Psi}$ is a plurisubharmonic function on $\Omega_T:=M_T\times U_T$, where it can be seen that $\Omega$ is a Stein manifold.
	
	Denote that
	\[K(w):=K^{h}_{\xi,\Psi,\lambda}(\text{Re\ }w)\]
	for any $w\in U_T$. We prove that $\log K(w)$ is a subharmonic function with respect to $w\in U_T$.
	
	Firstly we prove that $\log K(w)$ is upper semicontinuous. Let $w_j\in U_T$ such that $\lim_{\lambda\rightarrow+\infty}w_j=w_0\in U_T$. Then $\lim_{j\rightarrow+\infty}\text{Re\ }w_j=\text{Re\ }w_0\in [T,+\infty)$. Following from Lemma \ref{upper-semi}, we get that
	\[\limsup_{j\rightarrow+\infty}\log K(w_j)\leq \log K(w_0).\]
	Thus $\log K(w)$ is upper semicontinuous with respect to $w\in U_T$.
	
	Secondly we prove that $\log K(w)$ satisfies the sub-mean value inequality on $U_T$.
	
	Let $w_0\in U_T$, and $\Delta(w_0,r)\subset U_T$ be the disc centered at $w_0$ with radius $r$. Let $\Omega':=M_T\times\Delta(w_0,r)\subset \Omega$ be a submanifold of $\Omega$. Let $f_0\in A^2(M_T,h)$ such that
	\[K(w_0)=\frac{|\xi\cdot f_0|^2}{\|f_0\|^2_{\lambda,\text{Re\ }w_0}}\]
	by Lemma \ref{sup=max}.
	
	Note that  $M_T$ is a Stein manifold, and $\tilde{\Psi}(z,w)=\Psi_{\lambda,\text{Re\ }w}=\lambda\max\{\Psi(z)+\text{Re\ }w,0\}$ is a bounded plurisubharmonic function on $\Omega'$. Using Lemma \ref{L2ext}, we can get an $E'-$valued holomorphic $(n+1,0)$ form $\tilde{f}$ on $\Omega'$ such that $\frac{\tilde{f}}{dw}|_{M_T\times\{w_0\}}=f_0$, and
	\begin{equation}\label{ineqL2ext}
		\frac{1}{\pi r^2}\int_{\Omega'}|\tilde{f}|_{\pi_1^*(h)}^2e^{-\tilde{\Psi}}\leq \int_{M_T}|f_0|_h^2e^{-\Psi_{\lambda,\text{Re\ }w_0}}.
	\end{equation}
	
	Denote that $\tilde{f}_w=\frac{\tilde{f}}{dw}|_{M_T\times\{w\}}$. Since the function $y=\log x$ is concave, according to Jensen's inequality and inequality (\ref{ineqL2ext}), we have
	\begin{flalign}\label{ineqJensen}
		\begin{split}
			\log\|f_0\|^2_{\lambda,\text{Re\ }w_0}&=\log\left(\int_{M_T}|f_0|_h^2e^{-\Psi_{\lambda,\text{Re\ }w_0}}\right)\\
			&\geq\log\left(\frac{1}{\pi r^2}\int_{\Omega'}|\tilde{f}|_h^2e^{-\tilde{\Psi}}\right)\\
			&=\log\left(\frac{1}{\pi r^2}\int_{\Delta(w_0,r)}\left(\int_{M_T\times\{w\}}|\tilde{f}_w|_h^2e^{-\Psi_{\lambda,\text{Re\ }w}}\right)d\mu_{\Delta_{w_0,r}}(w)\right)\\
			&\geq\frac{1}{\pi r^2}\int_{\Delta(w_0,r)}\log\left(\|\tilde{f}_w\|^2_{\lambda,\text{Re\ }w}\right)d\mu_{\Delta_{w_0,r}}(w)\\
			&\geq\frac{1}{\pi r^2}\int_{\Delta(w_0,r)}\left(\log|\xi\cdot \tilde{f}_w|^2-\log K(w)|\right)d\mu_{\Delta_{w_0,r}}(w).
		\end{split}
	\end{flalign}
	Where $\mu_{\Delta_{w_0,r}}$ is the Lebesgue measure on $\Delta_{w_0,r}$. It follows from Lemma \ref{xihol} that $\xi\cdot \tilde{f}_w$ is holomorphic with respect to $w$, which implies that $\log|\xi\cdot\tilde{f}_w|^2$ is subharmonic with respect to $w$. Then we have
	\[\log|\xi\cdot f_0|^2\leq\frac{1}{\pi r^2}\int_{\Delta(w_0,r)}\log|\xi\cdot \tilde{f}_w|^2d\mu_{\Delta_{w_0,r}}(w).\]
	Combining with inequlity (\ref{ineqJensen}), we get
	\[\log\|f_0\|^2_{\lambda,\text{Re\ }w_0}\geq\log|\xi\cdot f_0|^2-\frac{1}{\pi r^2}\int_{\Delta(w_0,r)}\log K(w)d\mu_{\Delta_{w_0,r}}(w),\]
	which means
	\[\log K(w_0)\leq\frac{1}{\pi r^2}\int_{\Delta(w_0,r)}\log K(w)d\mu_{\Delta_{w_0,r}}(w).\]
	
	Since $\log K(w)$ is upper semicontinuous and satisfies the sub-mean value inequality on $U_T$, we know that $\log K(w)$ is a subharmonic function on $U_T$.
\end{proof}

\section{Proof of Theorem \ref{increasing}}
In this section, we give the proof of Theorem \ref{increasing}. We need the following lemma.

\begin{Lemma}[see \cite{Demaillybook}]\label{Re}
	Let $D=I+\sqrt{-1}\mathbb{R}:=\{z=x+\sqrt{-1}y\in\mathbb{C} : x\in I, y\in\mathbb{R}\}$ be a subset of $\mathbb{C}$, where $I$ is an interval in $\mathbb{R}$. Let $\phi(z)$ be a subharmonic function on $D$ which is only dependent on $x=\text{Re\ }z$. Then $\phi(x):=\phi(x+\sqrt{-1}\mathbb{R})$ is a convex function with respect to $x\in I$.
\end{Lemma}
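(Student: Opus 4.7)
The plan is to prove this classical fact by the standard mollification argument. Let $\chi \in C_c^{\infty}(\mathbb{C})$ be a nonnegative, radially symmetric mollifier with $\int_{\mathbb{C}}\chi\, dA = 1$, and for $\epsilon>0$ set $\chi_\epsilon(z):=\epsilon^{-2}\chi(z/\epsilon)$. On the subset $D_\epsilon:=\{z\in D: d(z,\partial D)>\epsilon\}=I_\epsilon+\sqrt{-1}\mathbb{R}$ (where $I_\epsilon\subset I$ is obtained by shrinking $I$ by $\epsilon$), define the standard regularization
\[\phi_\epsilon(z):=\int_{\mathbb{C}}\phi(z-w)\chi_\epsilon(w)\,dA(w).\]
Then $\phi_\epsilon$ is smooth and subharmonic on $D_\epsilon$, and $\phi_\epsilon\downarrow \phi$ pointwise on $D$ as $\epsilon\downarrow 0$.

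Next, I would observe that $\phi_\epsilon$ still depends only on $\operatorname{Re} z$. Indeed, writing $z=x+\sqrt{-1}y$ and $w=u+\sqrt{-1}v$, the hypothesis $\phi(z-w)=\phi(x-u)$ together with the translation invariance of $dA$ and the radial symmetry of $\chi_\epsilon$ (which in particular makes it an even function of $v$) gives
\[\phi_\epsilon(x+\sqrt{-1}y)=\int_{\mathbb{R}^2}\phi(x-u)\chi_\epsilon(u,v)\,du\,dv,\]
which is manifestly independent of $y$. Hence $\phi_\epsilon$ descends to a smooth function $\phi_\epsilon(x)$ on $I_\epsilon$.

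Because $\phi_\epsilon$ is smooth and subharmonic on $D_\epsilon$ but depends only on $x$, its Laplacian reduces to
\[0\leq \Delta\phi_\epsilon=\partial_{xx}\phi_\epsilon+\partial_{yy}\phi_\epsilon=\partial_{xx}\phi_\epsilon,\]
so $\phi_\epsilon$ is convex on $I_\epsilon$. For any $x_1,x_2$ in the interior of $I$ and any $\alpha\in[0,1]$, applying the inequality $\phi_\epsilon(\alpha x_1+(1-\alpha)x_2)\leq \alpha\phi_\epsilon(x_1)+(1-\alpha)\phi_\epsilon(x_2)$ for all sufficiently small $\epsilon$ and then letting $\epsilon\downarrow 0$ yields the same inequality for $\phi$; thus $\phi$ is convex on the interior of $I$. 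If $I$ contains an endpoint, the upper semicontinuity of $\phi$ inherited from subharmonicity allows the convexity inequality to extend to the closure within $I$.

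No real obstacle is anticipated: the only delicate point is verifying that the radially symmetric mollifier preserves the $x$-only dependence (which boils down to the parity of $\chi_\epsilon$ in the $v$-variable) and that pointwise decreasing limits of convex functions remain convex, both of which are immediate.
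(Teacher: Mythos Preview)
Your argument is correct and is the standard mollification proof of this classical fact. The paper does not give its own proof of this lemma; it simply cites Demailly's book, so there is nothing to compare against beyond noting that your approach is exactly the one found in standard references.

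One harmless over-explanation: the independence of $\phi_\epsilon$ from $y$ does not actually rely on the evenness of $\chi_\epsilon$ in $v$. Once you write $\phi(z-w)=\phi(x-u)$, the integrand $\phi(x-u)\chi_\epsilon(u,v)$ already contains no $y$, so any compactly supported mollifier would do for that step. The radial symmetry is genuinely needed, however, for the monotone convergence $\phi_\epsilon\downarrow\phi$ (since that relies on the monotonicity of circular means of subharmonic functions). Also, since subharmonic functions are defined on open sets, $I$ is necessarily open in the intended application, so your endpoint discussion is not needed here---but it does no harm.
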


\begin{proof}[Proof of Theorem \ref{increasing}]
	It follows from Theorem \ref{concavity} that $\log K^{h}_{\xi,\Psi,\lambda}(\text{Re\ } w)$ is subharmonic with respect to $w\in (T,+\infty)+\sqrt{-1}\mathbb{R}$. Note that $\log K^{h}_{\xi,\Psi,\lambda}(\text{Re\ } w)$ is only dependent on $\text{Re\ }w$, then following from Lemma \ref{Re}, we get that $\log K^{h}_{\xi,\Psi,\lambda}(t)=\log K^{h}_{\xi,\Psi,\lambda}(t+\sqrt{-1}\mathbb{R})$ is convex with respect to $t\in (T,+\infty)$. Combining with Lemma \ref{upper-semi}, we get that $\log K^{h}_{\xi,\Psi,\lambda}(t)$ is convex with respect to $t\in [T,+\infty)$, which implies that $-\log K^{h}_{\xi,\Psi,\lambda}(t)+t$ is concave with respect to $t\in [T,+\infty)$. Then for any $\xi\in  A^2(M_T,h)^*$ with $\xi|_{ A^2(M_T,h)\cap J }\equiv 0$, to prove that $\log -K^{h}_{\xi,\Psi,\lambda}(t)+t$ is increasing, we only need to prove that $\log -K^{h}_{\xi,\Psi,\lambda}(t)+t$ has a lower bound on $[T,+\infty)$.
	
	Using Lemma \ref{sup=max}, we obtain that there exists $f_t\in  A^2(M_T,h)$ for any $t\in [T,+\infty)$, such that $\xi\cdot f_t=1$ and
	\begin{equation}\label{K=ft}
		K^{h}_{\xi,\Psi,\lambda}(t)=\frac{1}{\|f_t\|_{\lambda,t}^2}.
	\end{equation}
	In addition, according to Lemma \ref{L2mthod}, there exists an $E-$valued holomorphic $(n,0)$ form $\tilde{F}$ on $M_T$ such that
	\begin{equation}\label{tildeF}
		\int_{M_T}|\tilde{F}-(1-b_{t}(\Psi))f_tF^2|_{\tilde{h}}^2e^{-\tilde{\varphi}+v_{t}(\Psi)}\leq C\int_{M_T}\mathbb{I}_{\{-t-1<\Psi<-t\}}|f_tF|_{\tilde{h}}^2,
	\end{equation}
	where $\tilde{\varphi}=\max\{\psi+T,2\log|F|\}$, $\tilde{h}=he^{-\psi}$, and $C$ is a positive constant independent of $F$ and $t$. Then it follows from inequality (\ref{tildeF}) that
	\begin{flalign}\label{tildeF2}
		\begin{split}
			&\int_{M_T}|\tilde{F}-(1-b_{t}(\Psi))f_tF^2|_{\tilde{h}}^2e^{-\tilde{\varphi}+v_{t}(\Psi)}\\
			\leq&C\int_{M_T}\mathbb{I}_{\{-t-1<\Psi<-t\}}|f_tF|_{\tilde{h}}^2\\
			\leq&Ce^{t+1}\int_{\{\Psi<-t\}}|f_t|_{h}^2.
		\end{split}
	\end{flalign}
	Denote that $\tilde{F}_t:=\tilde{F}/F^2$ on $M_T$, then $\tilde{F}_t$ is an $E-$valued holomorphic $(n,0)$ form on $M_T$. Note that $\tilde{\varphi}=2\log|F|$ and $\Psi=\psi-2\log|F|$ on $M_T$. Then inequality (\ref{tildeF2}) implies that
	\begin{equation}\label{tildeFt}
		\int_{M_T}|\tilde{F}_t-(1-b_{t}(\Psi))f_t|_{h}^2e^{v_{t}(\Psi)-\Psi}\leq Ce^{t+1}\int_{\{\Psi<-t\}}|f_t|_{h}^2<+\infty.
	\end{equation}
	According to inequality (\ref{tildeFt}), we can get that $(\tilde{F}_t-f_t)_{z_0}\in \mathcal{O}(K_M)_{z_0}\otimes I(h,\Psi)_{z_0}\subset \mathcal{O}(K_M)_{z_0}\otimes J_{z_0}$, which means that $\xi\cdot \tilde{F}_t=\xi\cdot f_t=1$. Besides, since $v_t(\Psi)\geq \Psi$, we have
	\begin{flalign*}
		\begin{split}
			&\left(\int_{M_T}|\tilde{F}_t-(1-b_{t}(\Psi))f_t|_{h}^2e^{v_{t}(\Psi)-\Psi}\right)^{1/2}\\
			\geq&\left(\int_{M_T}|\tilde{F}_t-(1-b_{t}(\Psi))f_t|_{h}^2\right)^{1/2}\\
			\geq&\left(\int_{M_T}|\tilde{F}_t|_{h}^2\right)^{1/2}-\left(\int_{M_T}|(1-b_{t}(\Psi))f_t|_{h}^2\right)^{1/2}\\
			\geq&\left(\int_{M_T}|\tilde{F}_t|_{h}^2\right)^{1/2}-\left(\int_{\{\Psi<-t\}}|f_t|_{h}^2\right)^{1/2}.
		\end{split}
	\end{flalign*}
	Combining with inequality (\ref{tildeFt}), we have
	\begin{flalign*}
		\begin{split}
			&\int_{M_T}|\tilde{F}_t|_{h}^2\\
			\leq&2\int_{M_T}|\tilde{F}_t-(1-b_{t}(\Psi))f_t|_{h}^2e^{v_{t}(\Psi)-\Psi}+2\int_{\{\Psi<-t\}}|f_t|_{h}^2\\
			\leq&2(Ce^{t+1}+1)\int_{\{\Psi<-t\}}|f_t|_{h}^2.
		\end{split}
	\end{flalign*}
	Note that
	\begin{flalign*}
		\begin{split}
			\|f_t\|^2_{\lambda,t}=&\int_{M_T}|f_t|_{h}^2e^{-\Psi_{\lambda,t}}\\
			=&\int_{\{\Psi<-t\}}|f_t|_{h}^2+\int_{\{T>\Psi\geq -t\}}|f_t|_{h}^2e^{-\lambda(\Psi+t)}\\
			\geq&\int_{\{\Psi<-t\}}|f_t|_{h}^2.
		\end{split}
	\end{flalign*}
	Then we have
	\[\int_{M_T}|\tilde{F}_t|_{h}^2\leq 2(Ce^{t+1}+1)\|f_t\|_{\lambda,t}^2\leq C_1\frac{e^t}{K^{h}_{\xi,\Psi,\lambda}(t)},\]
	where $C_1$ is a positive constant independent on $t$. In addition, $\xi\cdot \tilde{F}_t=1$ implies that
	\[\int_{M_T}|\tilde{F}_t|_{h}^2=\|\tilde{F}_t\|^2_{\lambda,T}\geq (K^{h}_{\xi,\Psi,\lambda}(T))^{-1}.\]
	Then we get that
	\[-\log K^{h}_{\xi,\Psi,\lambda}(t)+t\geq C_2,\ \forall t\in [T,+\infty),\]
	where $C_2:=\log (C_1^{-1}K^{h}_{\xi,\Psi,\lambda}(T))$ is a finite constant. Since $-\log K^{h}_{\xi,\Psi,\lambda}(t)+t$ is concave, we get that $-\log K^{h}_{\xi,\Psi,\lambda}(t)+t$ is increasing with respect to $t\in [T,+\infty)$.
\end{proof}

\section{Proofs of Corollary \ref{L2integral} and Corollary \ref{SOPE}}
In this section, we give the proofs of Corollary \ref{L2integral} and Corollary \ref{SOPE}. Before the proofs, we do some preparations.

Let $h$ be a measurable metric on $E$ satisfying that $h$ has a positive locally lower bound. Let $(M,E,\Sigma, M_j, h,h_{j,s})$ be a singular metric on $E$. Assume that $\Theta_{h}\geq_{Nak}^s 0$. Let $f$ be a holomorphic $(n,0)$ form on $M_{t_0}=\{\Psi<-t_0\}$ for some $t_0\geq T$ such that $f\in A^2(M_{t_0},h)$. Let $z_0\in M$, and assume that $a_{z_0}^f(\Psi;h)<+\infty$. According to Remark \ref{a>0}, we know that $a_{z_0}^f(\Psi;h)\in (0,+\infty)$. 

Let $p>2a_{z_0}^f(\Psi;h)$ and $\lambda>0$. Let $\xi\in A^2(M_{t_0},h)^*\setminus\{0\}$ satisfying $\xi|_{A^2(M_{t_0},h)\cap J_p}\equiv 0$, where $J_p:=I(h,p\Psi)_{z_0}$. Denote that
\[K_{\xi,p,\lambda}(t):=\sup_{\tilde{f}\in A^2(M_{t_0},h)}\frac{|\xi\cdot \tilde{f}|^2}{\|\tilde{f}\|^2_{p,\lambda,t}},\]
where
\[\|\tilde{f}\|_{p,\lambda,t}:=\left(\int_{M_{t_0}}|\tilde{f}|_h^2e^{-\lambda\max\{p\Psi+t,0\}}\right)^{1/2},\]
and $t\in [pt_0,+\infty)$. Note that
\[p\Psi=\min\{p\psi+(2\lceil p\rceil-2p)\log|F|-2\log|F^{\lceil p\rceil}|,-pt_0\}\]
on $\{p\Psi<-pt_0\}$ for any $p>0$, where $\lceil p\rceil :=\min\{m\in\mathbb{Z} : m\geq p\}$. Then definition of $J_p$ shows that $f\in A^2(M_{t_0},h)\setminus(A^2(M_{t_0},h)\cap J_p)$, which implies that $A^2(M_{t_0},h)\cap J_p$ is a proper subspace of $A^2(M_{t_0},h)$, and $K_{\xi,p,\lambda}(pt_0)\in (0,+\infty)$. Then Theorem \ref{increasing} tells us that $-\log K_{\xi,p,\lambda}(t)+t$ is increasing with respect to $t\in [pt_0,+\infty)$, which implies that
\begin{equation}\label{-logK(t)+t}
	-\log K_{\xi,p,\lambda}(t)+t\geq -\log K_{\xi,p,\lambda}(pt_0)+pt_0, \ \forall t\in [pt_0,+\infty).
\end{equation}

Since $f\in A^2(M_{t_0},h)$, following from inequality (\ref{-logK(t)+t}), we get that
\[\|f\|^2_{p,\lambda,t}\geq\frac{|\xi\cdot f|^2}{K_{\xi,p,\lambda}(t)}\geq e^{-t+pt_0}\frac{|\xi\cdot f|^2}{K_{\xi,p,\lambda}(pt_0)}, \ \forall t\in [pt_0,+\infty).\]
In addition, since $f\notin A^2(M_{t_0},h)\cap J_p$, according to Lemma \ref{B=C}, we have
\begin{flalign}\label{f>e^-tC}
	\begin{split}
		\|f\|^2_{p,\lambda,t}&\geq\sup_{\substack{\xi\in A^2(M_{t_0},h)^*\setminus\{0\}\\ \xi|_{A^2(M_{t_0},h)\cap J_p}\equiv 0}}e^{-t+pt_0}\frac{|\xi\cdot f|^2}{K_{\xi,p,\lambda}(pt_0)}\\
		&=e^{-t+pt_0}C(p\Psi,h,J_p,f,M_{t_0}), \ \forall t\in [pt_0,+\infty).
	\end{split}
\end{flalign}
Note that for any $t\in [pt_0,+\infty)$,
\begin{equation}\label{f2}
	\|f\|^2_{p,\lambda,t}=\int_{\{p\Psi<-t\}}|f|_h^2+\int_{\{-pt_0>p\Psi\geq -t\}}|f|_h^2e^{-\lambda(p\Psi+t)}.
\end{equation}
Since for any $\lambda>0$,
\[\int_{\{-pt_0>p\Psi\geq -t\}}|f|_h^2e^{-\lambda(p\Psi+t)}\leq\int_{\{-pt_0>p\Psi\geq -t\}}|f|_h^2<+\infty,\]
and $\lim_{\lambda\rightarrow+\infty}e^{-\lambda(p\Psi+t)}=0$ on $\{-pt_0>p\Psi\geq -t\}$, according to Lebesgue's dominated convergence theorem, we have
\[\lim_{\lambda\rightarrow+\infty}\int_{\{-pt_0>p\Psi\geq -t\}}|f|_h^2e^{-\lambda(p\Psi+t)}=0.\]
Then equality (\ref{f2}) implies
\begin{equation}
	\lim_{\lambda\rightarrow+\infty}\|f\|^2_{p,\lambda,t}=\int_{\{p\Psi<-t\}}|f|_h^2, \ \forall t\in [pt_0,+\infty).
\end{equation}
Letting $\lambda\rightarrow+\infty$ in inequality (\ref{f>e^-tC}), we get that for any $t\in [pt_0,+\infty)$,
\begin{equation}\label{f>e^-tC2}
	\int_{\{p\Psi<-t\}}|f|_h^2\geq e^{-t+pt_0}C(p\Psi,h,J_p,f,M_{t_0}).
\end{equation}

Now we give the proof the Corollary \ref{L2integral}.
\begin{proof}[Proof of Corollary \ref{L2integral}]	
	Note that $J_p\subset I_+(h,2a_{z_0}^f(\Psi;h)\Psi)_{z_0}$ for any $p>2a_{z_0}^f(\Psi;h)$. Then we have
	\[C(p\Psi,h,J_p,f,M_{t_0})\geq C(\Psi,h,I_+(h,2a_{z_0}^f(\Psi;h)\Psi)_{z_0},f,M_{t_0}), \ \forall p>2a_{z_0}^f(\Psi;h).\]
	Since $\int_{M_{t_0}}|f|_h^2<+\infty$, it follows from Lebesgue's dominated convergence theorem and inequality (\ref{f>e^-tC2}) that
	\begin{flalign}
		\begin{split}
			&\int_{\{2a_{z_0}^f(\Psi;h)\Psi<-t\}}|f|_h^2\\
			=&\lim_{p\rightarrow 2a_{z_0}^f(\Psi;h)+0}\int_{\{p\Psi<-t\}}|f|_h^2\\
			\geq&\limsup_{p\rightarrow 2a_{z_0}^f(\Psi;h)+0}e^{-t+pt_0}C(p\Psi,h,J_p,f,M_{t_0})\\
			\geq&e^{-t+2a_{z_0}^f(\Psi;h)t_0}C(\Psi,h,I_+(h,2a_{z_0}^f(\Psi;h)\Psi)_{z_0},f,M_{t_0}),
		\end{split}
	\end{flalign}
	for any $t\in (2a_{z_0}^f(\Psi;h)t_0,+\infty)$. For $t=2a_{z_0}^f(\Psi;h)t_0$, it is clear that the above inequality also holds by the definition of $C(\Psi,h,I_+(h,2a_{z_0}^f(\Psi;h)\Psi)_{z_0},f,M_{t_0})$.
	
	Let $r:=e^{-t/2}$, and we get that Corollary \ref{L2integral} holds.
\end{proof}

In the following we give the proof of Corollary \ref{SOPE}.

\begin{proof}[Proof of Corollary \ref{SOPE}]
	For any $q>2a_{z_0}^f(\Psi;h)$, according to inequality (\ref{f>e^-tC2}), we get that for any $t\in [qt_0,+\infty)$,
	\begin{equation}\label{f>e^-tC2:2}
		\int_{\{q\Psi<-t\}}|f|_h^2\geq e^{-t+qt_0}C(q\Psi,h,J_q,f,M_{t_0}).
	\end{equation}
	
	It follows from Fubini's Theorem that
	\begin{flalign*}
		\begin{split}
			&\int_{\{\Psi<-t_0\}}|f|_h^2e^{-\Psi}\\
			=&\int_{\{\Psi<-t_0\}}\left(|f|_h^2\int_0^{e^{-\Psi}}\mathrm{d}s\right)\\
			=&\int_0^{+\infty}\left(\int_{\{\Psi<-t_0\}\cap\{s<e^{-\Psi}\}}|f|_h^2\right)\mathrm{d}s\\
			=&\int_{-\infty}^{+\infty}\left(\int_{\{q\Psi<-qt\}\cap\{\Psi<-t_0\}}|f|_h^2\right)e^t\mathrm{d}t.
		\end{split}
	\end{flalign*}
	Inequality (\ref{f>e^-tC2:2}) implies that for any $q>2a_o^f(\Psi;\varphi)$,
	\begin{flalign*}
		\begin{split}
			&\int_{t_0}^{+\infty}\left(\int_{\{q\Psi<-qt\}\cap\{\Psi<-t_0\}}|f|_h^2\right)e^t\mathrm{d}t\\
			\geq&\int_{t_0}^{+\infty}e^{-qt+qt_0}C(q\Psi,h,J_q,f,M_{t_0})\cdot e^t\mathrm{d}t\\
			=&\frac{1}{q-1}e^{t_0}C(q\Psi,h,J_q,f,M_{t_0}),
		\end{split}
	\end{flalign*}
	and
	\begin{flalign*}
		\begin{split}
			&\int_{-\infty}^{t_0}\left(\int_{\{q\Psi<-qt\}\cap\{\Psi<-t_0\}}|f|_h^2\right)e^t\mathrm{d}t\\
			\geq&\int_{-\infty}^{t_0}C(q\Psi,h,J_q,f,M_{t_0})
			\cdot e^t\mathrm{d}t\\
			=&e^{t_0}C(q\Psi,h,J_q,f,M_{t_0}).
		\end{split}
	\end{flalign*}
	Then we have
	\begin{equation}\label{q/q-1e^t_0}
		\int_{M_{t_0}}|f|_h^2e^{-\Psi}\geq \frac{q}{q-1}e^{t_0}C(q\Psi,h,J_q,f,M_{t_0}).
	\end{equation}
	for any $q>2a_{z_0}^f(\Psi;h)$. Note that $J_q\subset I_+(h,2a_{z_0}^f(\Psi;h)\Psi)_{z_0}$ for any $q>2a_{z_0}^f(\Psi;h)$, which implies
	\[C(q\Psi,h,J_q,f,M_{t_0})\geq C(\Psi,h,I_+(h,2a_{z_0}^f(\Psi;h)\Psi)_{z_0},f,M_{t_0}), \ \forall q>2a_{z_0}^f(\Psi;h).\]
	Then inequality (\ref{q/q-1e^t_0}) induces
	\begin{equation}\label{q/q-1}
		\int_{M_{t_0}}|f|_h^2e^{-\Psi}\geq \frac{q}{q-1}e^{t_0}C(\Psi,h,I_+(h,2a_{z_0}^f(\Psi;h)\Psi)_{z_0},f,M_{t_0}).
	\end{equation}
	Let $q\rightarrow 2a_{z_0}^f(\Psi;h)+0$, then inequality (\ref{q/q-1}) also holds for $q\geq 2a_{z_0}^f(\Psi;h)$. Thus if $q>1$ satisfying
	\begin{equation}
		\int_{M_{t_0}}|f|_h^2e^{-\Psi}< \frac{q}{q-1}e^{t_0}C(\Psi,h,I_+(h,2a_{z_0}^f(\Psi;h)\Psi)_{z_0},f,M_{t_0}),
	\end{equation}
	we have $q<2a_{z_0}^f(\Psi;h)$, which means that $f_{z_0}\in\mathcal{O}(K_M)_{z_0}\otimes I(h,q\Psi)_{z_0}$. Proof of Corollary \ref{SOPE} is done.
\end{proof}

\section{Appendix}
In this section, we give the proof of Lemma \ref{L2ext}. We firstly recall some notations and lemmas.

Let $M$ be a complex manifold. Let $\omega$ be a continuous hermitian metric on $M$. Let $dV_M$ be a continuous volume form on $M$. We denote by $L^2_{p,q}(M,\omega,dV_M)$ the spaces of $L^2$ integrable $(p,q)$ forms over $M$ with respect to $\omega$ and $dV_M$. It is known that $L^2_{p,q}(M,\omega,dV_M)$ is a Hilbert space.
\begin{Lemma}[see \cite{GMY5}]
\label{weakly convergence}
Let $\{u_n\}_{n=1}^{+\infty}$ be a sequence of $(p,q)$ forms in $L^2_{p,q}(M,\omega,dV_M)$ which is weakly convergent to $u$. Let $\{v_n\}_{n=1}^{+\infty}$ be a sequence of Lebesgue measurable real functions on $M$ which converges point-wisely to $v$. We assume that there exists a constant $C>0$ such that $|v_n|\le C$ for any $n$. Then $\{v_nu_n\}_{n=1}^{+\infty}$ weakly converges to $vu$ in $L^2_{p,q}(M,\omega,dV_M)$.
\end{Lemma}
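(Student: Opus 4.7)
The plan is to test the sequence $\{v_nu_n\}$ against an arbitrary $\phi\in L^2_{p,q}(M,\omega,dV_M)$ and show that the pairings converge to $\langle vu,\phi\rangle_{L^2}$. As a preliminary, since $|v_n|\le C$ pointwise and $v_n\to v$ pointwise, we automatically get $|v|\le C$ a.e., so multiplication by $v$ is a bounded operator on $L^2_{p,q}(M,\omega,dV_M)$ and $vu\in L^2_{p,q}(M,\omega,dV_M)$. Moreover, by the uniform boundedness principle applied to the weakly convergent sequence $\{u_n\}$, we have $\sup_n\|u_n\|_{L^2}<+\infty$; combined with $|v_n|\le C$ this gives $\sup_n\|v_nu_n\|_{L^2}\le C\sup_n\|u_n\|_{L^2}<+\infty$, so any weak limit of a subsequence lies in $L^2_{p,q}(M,\omega,dV_M)$.

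The key step is a standard splitting. For any test form $\phi\in L^2_{p,q}(M,\omega,dV_M)$, using that $v_n$ and $v$ are real-valued and commute across the pointwise Hermitian inner product of $(p,q)$-forms, I would write
\[
\langle v_nu_n,\phi\rangle_{L^2}-\langle vu,\phi\rangle_{L^2}=\langle u_n-u,\,v\phi\rangle_{L^2}+\langle u_n,\,(v_n-v)\phi\rangle_{L^2}.
\]
The first term tends to $0$ as $n\to+\infty$ because $v\phi$ is a single fixed element of $L^2_{p,q}(M,\omega,dV_M)$ and $u_n$ converges to $u$ weakly.

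For the second term I would apply Cauchy--Schwarz,
\[
|\langle u_n,(v_n-v)\phi\rangle_{L^2}|\le\|u_n\|_{L^2}\,\|(v_n-v)\phi\|_{L^2},
\]
and control the two factors separately: the first is uniformly bounded in $n$ by the preliminary step, while for the second, $|(v_n-v)\phi|_\omega^2\to 0$ pointwise and is dominated by the $L^1$ function $4C^2|\phi|_\omega^2$, so Lebesgue's dominated convergence theorem yields $\|(v_n-v)\phi\|_{L^2}\to 0$. Hence the whole pairing $\langle v_nu_n,\phi\rangle_{L^2}$ converges to $\langle vu,\phi\rangle_{L^2}$ for every test form $\phi$, which is exactly the weak convergence claim.

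The only subtle point is that one cannot simply rewrite $\langle v_nu_n,\phi\rangle_{L^2}=\langle u_n,v_n\phi\rangle_{L^2}$ and pass to the weak limit, since the ``test form'' $v_n\phi$ varies with $n$. The splitting above isolates this obstacle in a term where the uniform bound on $\|u_n\|_{L^2}$ meets the strong $L^2$-convergence $(v_n-v)\phi\to 0$; strong $L^2$-convergence here is precisely what the uniform pointwise bound $|v_n|\le C$ delivers through dominated convergence.
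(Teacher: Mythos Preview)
Your argument is correct and is the standard proof of this fact. Note, however, that the paper does not actually supply its own proof of this lemma: it is stated with a citation to \cite{GMY5} and used as a black box in the appendix. So there is no proof in the paper to compare against; your splitting-plus-dominated-convergence argument is exactly what one would expect such a proof to look like.
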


\begin{Lemma}[see \cite{guan-zhou13ap}]\label{semipositive}
	Let $Q$ be a Hermitian vector bundle on a K\"{a}hler manifold $M$ of dimension $n$ with a K\"{a}hler metric $\omega$. Let $\theta$ be a continuous $(1,0)$ form on $M$. Then we have
	\[[\sqrt{-1}\theta\wedge\bar{\theta},\Lambda_{\omega}]\alpha=\bar{\theta}\wedge(\alpha\llcorner(\bar{\theta})^{\sharp}),\]
	for any $(n,1)$ form $\alpha$ with value in $Q$. Moreover, for any positive $(1,1)$ form $\beta$, we have $[\beta,\Lambda_{\omega}]$ is semipositive.
\end{Lemma}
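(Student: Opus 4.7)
The plan is to establish the identity pointwise at an arbitrary $x_0\in M$ by a direct coordinate computation, and then derive the semipositivity assertion from the identity via a unitary diagonalization of $\beta$.

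Since both sides of the identity are tensorial, I would fix $x_0\in M$ and choose holomorphic coordinates $(z_1,\ldots,z_n)$ centered at $x_0$ normal for $\omega$, so that $\omega(x_0)=\sqrt{-1}\sum_{j=1}^n dz_j\wedge d\bar z_j$. If $\theta(x_0)=0$ both sides vanish at $x_0$, so one may assume $\theta(x_0)\neq 0$ and apply a further unitary rotation of the $z_j$'s to arrange $\theta(x_0)=c\,dz_1$ with $c=|\theta(x_0)|_\omega>0$. Under this normalization, $(\bar\theta)^\sharp(x_0)=\bar c\,\partial/\partial\bar z_1$ is a $(0,1)$ vector, so that $\alpha\llcorner(\bar\theta)^\sharp$ has bidegree $(n,0)$ and $\bar\theta\wedge(\alpha\llcorner(\bar\theta)^\sharp)$ has bidegree $(n,1)$, matching $\alpha$.

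The first key observation is the bidegree reason $\theta\wedge\alpha=0$: it would be $(n+1,1)$ on an $n$-dimensional manifold and so must vanish identically. Consequently,
\[
[\sqrt{-1}\theta\wedge\bar\theta,\Lambda_\omega]\alpha
=\sqrt{-1}\theta\wedge\bar\theta\wedge(\Lambda_\omega\alpha)-\Lambda_\omega(\sqrt{-1}\theta\wedge\bar\theta\wedge\alpha)
=\sqrt{-1}\theta\wedge\bar\theta\wedge\Lambda_\omega\alpha.
\]
Expanding $\alpha=\sum_{j=1}^n \alpha_j\,dz_1\wedge\cdots\wedge dz_n\wedge d\bar z_j$ at $x_0$ and computing $\Lambda_\omega\alpha$ explicitly in the normalized frame (using $\Lambda_\omega=-\sqrt{-1}\sum_k \iota_{\partial/\partial z_k}\iota_{\partial/\partial\bar z_k}$ up to the paper's sign convention), both $\sqrt{-1}\theta\wedge\bar\theta\wedge\Lambda_\omega\alpha$ and $\bar\theta\wedge(\alpha\llcorner(\bar\theta)^\sharp)$ collapse, after substituting $\theta=c\,dz_1$, to the same multiple of $\alpha_1\,dz_1\wedge\cdots\wedge dz_n\wedge d\bar z_1$. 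This establishes the identity pointwise at $x_0$, hence everywhere.

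For the semipositivity of $[\beta,\Lambda_\omega]$, at each $x_0$ the positive $(1,1)$ form admits a unitary diagonalization $\beta(x_0)=\sum_k \lambda_k\,\sqrt{-1}\,e^k\wedge\bar e^k$ with $\lambda_k\geq 0$ and $\{e^k\}$ a unitary coframe of $T^{*(1,0)}_{x_0}M$. Applying the first identity with $\theta=\sqrt{\lambda_k}\,e^k$ and summing gives
\[
\langle[\beta,\Lambda_\omega]\alpha,\alpha\rangle
=\sum_k \lambda_k\,\langle \bar e^k\wedge(\alpha\llcorner(\bar e^k)^\sharp),\alpha\rangle
=\sum_k \lambda_k\,\bigl|\alpha\llcorner(\bar e^k)^\sharp\bigr|^2\geq 0,
\]
where the second equality uses that wedge with $\bar e^k$ is the Hermitian adjoint of contraction with $(\bar e^k)^\sharp$. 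This yields the semipositivity of $[\beta,\Lambda_\omega]$ on $Q$-valued $(n,1)$ forms.

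The main obstacle is the bookkeeping in the direct coordinate computation: fixing the paper's sign convention for $\Lambda_\omega$, correctly pinning down the musical isomorphism $(\bar\theta)^\sharp$, and verifying that the expansion of $\Lambda_\omega\alpha$ matches the interior-product expression on the right-hand side. This is routine but delicate; it is streamlined by observing that, after the normalization $\theta=c\,dz_1$, only the $j=1$ summand of $\alpha$ contributes nontrivially on either side.
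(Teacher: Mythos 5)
Your proof is correct and is essentially the argument behind the cited result: the paper itself gives no proof of this lemma but refers to Guan--Zhou, where the identity is obtained by exactly this kind of pointwise computation (the bidegree vanishing $\theta\wedge\bar\theta\wedge\alpha=0$ reduces the commutator to $\sqrt{-1}\theta\wedge\bar\theta\wedge\Lambda_\omega\alpha$, and after normalizing $\omega$ and $\theta=c\,dz_1$ at the point both sides collapse to $c^2\alpha_1\,dz_1\wedge\cdots\wedge dz_n\wedge d\bar z_1$), while the semipositivity follows by unitary diagonalization of $\beta$ together with the adjointness of $\bar e^k\wedge\cdot$ and $\cdot\llcorner(\bar e^k)^\sharp$, just as you say. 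The only slip is the parenthetical formula $\Lambda_\omega=-\sqrt{-1}\sum_k\iota_{\partial/\partial z_k}\iota_{\partial/\partial\bar z_k}$, whose contractions are in the wrong order for the convention $\Lambda_\omega\omega=n$ (it should be $-\sqrt{-1}\sum_k\iota_{\partial/\partial\bar z_k}\iota_{\partial/\partial z_k}$); since you flagged the sign convention as the delicate bookkeeping point and the identity does check out with the correct choice, this is harmless.
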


Let $X$ be an $n-$dimensional complex manifold and $\omega$ be a hermitian metric on $X$. Let $Q$ be a vector bundle on $X$ with rank $r$. Let $D'':L^2(M,\wedge^{n,q}T^*M\otimes Q)\rightarrow L^2(M,\wedge^{n,q+1}T^*M\otimes Q)$ be the extension of $\bar{\partial}-$operator in the sense of distribution. Let $\{h_i\}_{i=1}^{+\infty}$ be a family of $C^2$ smooth hermitian metric on $Q$ and $h$ be a measurable metric on $Q$ such that $\lim_{i\to+\infty}h_i=h$ almost everywhere on $X$.  We assume that $\{h_i\}_{i=1}^{+\infty}$ and $h$ satisfy one of the following conditions,\\
$(A)$ $h_i$ is increasingly convergent to $h$ as $i\to+\infty$;\\
$(B)$ there exists a continuous metric $\hat{h}$ on $Q$ and a constant $C>0$ such that for any $i\ge 0$, $\frac{1}{C}\hat{h}\le h_i\le C\hat{h}$ and $\frac{1}{C}\hat{h}\le h\le C\hat{h}$.

Denote $\mathcal{H}_i:=L^2(X,K_X\otimes Q,h_i,dV_{\omega})$ and $\mathcal{H}:=L^2(X,K_X\otimes Q,h,dV_{\omega})$. Note that $\mathcal{H}\subset \mathcal{H}_i\subset \mathcal{H}_1$ for any $i\in\mathbb{Z}_{>0}$.

Denote $P_i:=\mathcal{H}_i\to \text{Ker}D''$ and $P:=\mathcal{H}\to \text{Ker}D''$ be the orthogonal projections with respect to $h_i$ and $h$ respectively.
\begin{Lemma}[\cite{GMY5}]\label{weakly converge lemma}
For any sequence of $Q$-valued $(n,0)$-forms $\{f_i\}_{i=1}^{+\infty}$ which satisfies $f_i\in\mathcal{H}_i$ and $||f_i||_{h_i}\le C_1$ for some constant $C_1>0$, there exists a $Q$-valued $(n,0)$-form $f_0\in \mathcal{H}$ such that there exists a subsequence of $\{f_i\}_{i=1}^{+\infty}$ (also denoted by $\{f_i\}_{i=1}^{+\infty}$) weakly converges to $f_0$ in $\mathcal{H}_1$ and $P_i(f_i)$ weakly converges to $P(f_0)$ in $\mathcal{H}_1$.
\end{Lemma}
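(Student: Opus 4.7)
The plan is to extract weak subsequential limits in the fixed Hilbert space $\mathcal{H}_1$, upgrade them to lie in $\mathcal{H}$ via weak lower semicontinuity and Fatou, and then identify the weak limit of $P_i(f_i)$ with $P(f_0)$ by passing to the limit in the orthogonality relations defining the projections. The main obstacle is that the inner products $\langle\cdot,\cdot\rangle_{h_i}$ vary with $i$, so replacing $h_i$ by $h$ in the limit requires a careful uniform-in-$i$ change-of-metric estimate.

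Under either hypothesis (A) or (B) there is a constant $C_0>0$ with $\|\cdot\|_{h_1}\le C_0\|\cdot\|_{h_i}$ uniformly in $i$: in (A) one has $h_1\le h_i$ pointwise, and in (B) the comparison goes through $\hat{h}$. Since $\|P_i(f_i)\|_{h_i}\le\|f_i\|_{h_i}\le C_1$, both $\{f_i\}$ and $\{P_i(f_i)\}$ are uniformly bounded in $\mathcal{H}_1$, and Banach--Alaoglu produces a common subsequence along which $f_i\rightharpoonup f_0$ and $P_i(f_i)\rightharpoonup g_0$ weakly in $\mathcal{H}_1$. Since $\mathrm{Ker}\,D''$ is a closed (hence weakly closed) subspace of $\mathcal{H}_1$, one has $g_0\in\mathrm{Ker}\,D''$. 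To see $f_0,g_0\in\mathcal{H}$: in case (B) this is automatic because $\mathcal{H}$ and $\mathcal{H}_1$ coincide as sets with equivalent norms; in case (A) I would fix $j$, note that $\{f_i\}_{i\ge j}$ is bounded in $\mathcal{H}_j$ (since $h_j\le h_i$), and, by testing against smooth compactly supported forms and invoking Lemma \ref{weakly convergence}, upgrade the weak convergence $f_i\rightharpoonup f_0$ to hold in $\mathcal{H}_j$ as well; weak lower semicontinuity then yields $\|f_0\|_{h_j}\le C_1$, and monotone convergence as $j\to\infty$ gives $\|f_0\|_h\le C_1$. The same argument yields $g_0\in\mathcal{H}$.

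To verify $g_0=P(f_0)$, set $u_i:=f_i-P_i(f_i)$ and $u_0:=f_0-g_0$. The inclusion $\mathcal{H}\subset\mathcal{H}_i$ gives $\langle u_i,\varphi\rangle_{h_i}=0$ for every $\varphi\in\mathcal{H}\cap\mathrm{Ker}\,D''$, so it suffices to show $\langle u_i,\varphi\rangle_{h_i}\to\langle u_0,\varphi\rangle_h$ for each such $\varphi$. Inserting an intermediate metric $h_j$, I would write
\[\langle u_i,\varphi\rangle_{h_i}-\langle u_0,\varphi\rangle_h=\bigl(\langle u_i,\varphi\rangle_{h_i}-\langle u_i,\varphi\rangle_{h_j}\bigr)+\bigl(\langle u_i,\varphi\rangle_{h_j}-\langle u_0,\varphi\rangle_{h_j}\bigr)+\bigl(\langle u_0,\varphi\rangle_{h_j}-\langle u_0,\varphi\rangle_h\bigr).\]
In case (A), Cauchy--Schwarz applied to the positive semidefinite form $h_i-h_j$ bounds the first term (for $i\ge j$) by $\|u_i\|_{h_i}\bigl(\|\varphi\|_{h_i}^2-\|\varphi\|_{h_j}^2\bigr)^{1/2}\le C_1\bigl(\|\varphi\|_h^2-\|\varphi\|_{h_j}^2\bigr)^{1/2}$, which tends to $0$ as $j\to\infty$ uniformly in $i$ by monotone convergence; the third term tends to $0$ as $j\to\infty$ by dominated convergence (pointwise convergence bounded by the integrable $|u_0|_h|\varphi|_h$); and for each fixed $j$ the middle term tends to $0$ as $i\to\infty$ by the weak convergence $u_i\rightharpoonup u_0$ in $\mathcal{H}_j$ established above. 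Case (B) admits an analogous three-term splitting, where Lemma \ref{weakly convergence} together with the uniform comparisons with $\hat h$ absorbs the bounded metric ratios. The heart of the argument is precisely this uniform-in-$i$ control of the change-of-metric error; hypotheses (A) and (B) are tailored to supply it.
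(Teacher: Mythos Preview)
The paper does not supply its own proof of this lemma; it is quoted from \cite{GMY5} without argument. So there is nothing to compare against, and I can only assess your proposal on its own merits.

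Your strategy is sound and is the natural one: extract weak limits in the fixed space $\mathcal{H}_1$, upgrade membership to $\mathcal{H}$ by monotone convergence (case (A)) or norm equivalence (case (B)), and identify $g_0=P(f_0)$ by passing to the limit in the orthogonality relation $\langle u_i,\varphi\rangle_{h_i}=0$ for $\varphi\in\mathcal{H}\cap\mathrm{Ker}\,D''$. The three-term splitting in case (A), with the Cauchy--Schwarz bound for the positive form $h_i-h_j$, is exactly the right device and gives the uniform-in-$i$ control you need.

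Two points deserve a little more care. First, the ``upgrade'' of weak convergence from $\mathcal{H}_1$ to $\mathcal{H}_j$ is not a direct application of Lemma~\ref{weakly convergence}; rather, it follows from the fact that $\{u_i\}_{i\ge j}$ is bounded in $\mathcal{H}_j$, so every subsequence has a further subsequence weakly convergent in $\mathcal{H}_j$, and any such limit must coincide with $u_0$ because both determine the same distributional limit of the local coefficients. Second, your treatment of case (B) is a bit elliptic. A clean way to handle it is to write $h_i-h=\hat h(\,\cdot\,,A_i\,\cdot\,)$ for an endomorphism $A_i$ that is uniformly bounded and tends to $0$ pointwise; then $A_i\varphi\to 0$ strongly in $L^2(\hat h)$ by dominated convergence, so $\langle u_i,\varphi\rangle_{h_i}-\langle u_i,\varphi\rangle_h=\langle u_i,A_i\varphi\rangle_{\hat h}\to 0$ by the uniform bound on $\|u_i\|_{\hat h}$, while $\langle u_i-u_0,\varphi\rangle_h\to 0$ by weak convergence in the equivalent norm. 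With these clarifications your proof goes through.
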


We need the following result in Hilbert spaces.
\begin{Lemma}\label{weaklycon_twonorms}
    Let $\langle\cdot,\cdot\rangle_1$, $\langle\cdot,\cdot\rangle_2$ be two inner products on a vector space $H$ such that both $(H,\langle\cdot,\cdot\rangle_1)$ and $(H,\langle\cdot,\cdot\rangle_2)$ are Hilbert spaces. Assume that there exists some $C>0$ such that $\|\cdot\|_2\le C\|\cdot\|_1$, where $\|\cdot\|_1$, $\|\cdot\|_2$ are the norms induced by $\langle\cdot,\cdot\rangle_1$ and $\langle\cdot,\cdot\rangle_2$ respectively. Then for any sequence $\{x_j\}\subset H$ weakly convergent to $x\in H$ in $(H,\langle\cdot,\cdot\rangle_1)$, then $\{x_j\}$ also weakly converges to $x$ in $(H,\langle\cdot,\cdot\rangle_2)$. 
\end{Lemma}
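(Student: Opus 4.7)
The plan is to reduce weak convergence in $(H,\langle\cdot,\cdot\rangle_2)$ to weak convergence in $(H,\langle\cdot,\cdot\rangle_1)$ by reinterpreting every $\langle\cdot,\cdot\rangle_2$-inner-product-test functional as a $\langle\cdot,\cdot\rangle_1$-inner-product-test functional via the Riesz representation theorem in $(H,\langle\cdot,\cdot\rangle_1)$. The only place the hypothesis $\|\cdot\|_2\le C\|\cdot\|_1$ enters is to guarantee that such a reinterpretation exists, i.e., that a functional continuous in $\|\cdot\|_2$ remains continuous in $\|\cdot\|_1$.

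Concretely, I would fix an arbitrary $y\in H$ and consider the linear functional
\[
\phi_y : H\longrightarrow \mathbb{C},\qquad \phi_y(u):=\langle u,y\rangle_2.
\]
By Cauchy--Schwarz in $(H,\langle\cdot,\cdot\rangle_2)$ together with the norm comparison, one has
\[
|\phi_y(u)|\le \|u\|_2\,\|y\|_2\le C\,\|y\|_2\,\|u\|_1,
\]
so $\phi_y$ is a bounded linear functional on the Hilbert space $(H,\langle\cdot,\cdot\rangle_1)$. The Riesz representation theorem applied in $(H,\langle\cdot,\cdot\rangle_1)$ then yields some $z_y\in H$ with
\[
\langle u,y\rangle_2=\phi_y(u)=\langle u,z_y\rangle_1\qquad \text{for all } u\in H.
\]

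With this reinterpretation in hand, the weak convergence hypothesis $x_j\rightharpoonup x$ in $(H,\langle\cdot,\cdot\rangle_1)$ gives immediately
\[
\langle x_j,y\rangle_2=\langle x_j,z_y\rangle_1\;\longrightarrow\;\langle x,z_y\rangle_1=\langle x,y\rangle_2,
\]
and since $y\in H$ was arbitrary, this is precisely weak convergence $x_j\rightharpoonup x$ in $(H,\langle\cdot,\cdot\rangle_2)$, as desired.

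There is no real obstacle here; the argument is a standard piece of Hilbert-space soft analysis. The only subtlety worth flagging is that completeness of $(H,\langle\cdot,\cdot\rangle_1)$ is essential for invoking Riesz, and the one-sided comparison $\|\cdot\|_2\le C\|\cdot\|_1$ is used precisely once, in the estimate bounding $\phi_y$ in the $\|\cdot\|_1$-norm; no reverse inequality is needed.
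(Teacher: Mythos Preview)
Your proof is correct and is essentially identical to the paper's own argument: both fix a test vector, view $u\mapsto\langle u,\cdot\rangle_2$ as a bounded linear functional on $(H,\langle\cdot,\cdot\rangle_1)$ via the estimate $|\langle u,y\rangle_2|\le C\|y\|_2\|u\|_1$, apply Riesz in $(H,\langle\cdot,\cdot\rangle_1)$ to represent it as $\langle\cdot,z_y\rangle_1$, and then read off the conclusion from the weak convergence hypothesis. The paper additionally remarks that $y\mapsto z_y$ defines a continuous linear operator, but this is not used and your omission of it changes nothing.
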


\begin{proof}
For any $w\in H$, we denote a functional $L_w$ over $(H,\langle\cdot,\cdot\rangle_1)$ as follows:
\begin{flalign*}
		\begin{split}
			L_w \ : \ H&\longrightarrow\mathbb{C}\\
			z&\longmapsto \langle z,w\rangle_2.
		\end{split}
	\end{flalign*}
It is clear that $L_w$ is linear. In addition, for any $z\in H$, we have
\[|L_w(z)|=|\langle z,y\rangle_2|\le \|z\|_2\|w\|_2\le C\|w\|_2\|z\|_1.\]
Then $L_w$ is a continuous functional over $(H,\langle\cdot,\cdot\rangle_1)$, which implies that there exists some $Tw\in H$ such that
\[\langle z,w\rangle_2=L_w(z)=\langle z,Tw\rangle_1\]
for any $z\in H$ by Riesz representation theorem. We have that $T : H\to H$ is a continuous linear operator. It follows that any weakly convergent sequence in $(H,\langle\cdot,\cdot\rangle_1)$ is also a weakly convergent sequence in $(H,\langle\cdot,\cdot\rangle_2)$.
\end{proof}

\begin{Lemma}[see \cite{GMY5}]\label{B+lambdaI}
	Let $M$ be a complex manifold admitting a complete K\"{a}hler metric, and $\omega$ is a K\"{a}hler metric on $M$ (not necessarily complete). Let $(Q,h)$ be a hermitian vector bundle over $M$. Assume that $\eta$ and $g$ are smooth bounded positive functions on $M$ such that $\eta+g^{-1}$ are smooth bounded positive functions on $M$ such that $\eta+g^{-1}$ is a smooth bounded positive functions on $M$ and let $B:=[\eta\sqrt{-1}\Theta_Q-\sqrt{-1}\partial\bar{\partial}\eta-\sqrt{-1}g\partial\eta\wedge\bar{\partial}\eta,\Lambda_{\omega}]$. Assume that $\tilde{\lambda}\geq 0$ is a bounded continuous function on $M$ such that $B+\tilde{\lambda}I$ is positive definite everywhere on $\wedge^{n,q}T^*M\otimes Q$ for some $q\geq 1$. Then given a form $v\in L^2(M,\wedge^{n,q}T^*M\otimes Q)$ such that $D''v=0$ and $\int_M\langle (B+\tilde{\lambda I})^{-1}v,v\rangle_{Q,\omega}dV_{\omega}<+\infty$, there exists an approximate solution $u\in L^2(M,\wedge^{n,q-1}T^*M\otimes Q)$ and a correcting term $\tau\in L^2(M,\wedge^{n,q}T^*M\otimes Q)$ such that $D''u+P_h(\sqrt{\tilde{\lambda}}\tau)=v$, where $P_h:L^2(M,\wedge^{n,q}T^*M\otimes Q)\rightarrow \text{Ker\ } D''$ is the orthogonal projection and
	\begin{equation}
		\int_M(\eta+g^{-1})^{-1}|u|_{Q,\omega}^2dV_{\omega}+\int_M|\tau|_{Q,\omega}^2dV_{\omega}\leq \int_M\langle (B+\tilde{\lambda}I)^{-1}v,v\rangle_{Q,\omega}dV_{\omega}.
	\end{equation}
\end{Lemma}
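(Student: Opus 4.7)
The plan is to deduce this twisted $\bar\partial$-existence theorem from the Ohsawa–Takegoshi–Demailly twisted Bochner–Kodaira–Nakano (BKN) estimate combined with a Hahn–Banach/Riesz duality argument. Since $\omega$ is not assumed complete while $M$ admits a complete K\"ahler metric $\hat\omega$, I first reduce to the complete case by working with $\omega_\epsilon := \omega + \epsilon\hat\omega$ for $\epsilon>0$ (complete for every $\epsilon$), solving the problem for each $\omega_\epsilon$, and passing to the weak limit as $\epsilon\to 0^+$. The $L^2$ quantities appearing in the statement are well-behaved under this limit because for $(n,\bullet)$-forms the relevant pointwise pairings behave monotonically in the conformal factor of the K\"ahler metric.

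With $\omega$ complete, the analytic heart is the twisted BKN inequality: for $\alpha \in C^\infty_c(M, \wedge^{n,q}T^*M\otimes Q)$ with $q \ge 1$,
\[\int_M \langle B\alpha,\alpha\rangle_{Q,\omega}\,dV_\omega \;\le\; \bigl\|\sqrt{\eta+g^{-1}}\,D''^*\alpha\bigr\|^2 + \bigl\|\sqrt{\eta}\,D''\alpha\bigr\|^2.\]
This follows from applying the Kodaira–Nakano identity to $\sqrt{\eta}\,\alpha$ and absorbing the cross terms involving $\bar\partial\eta$ by Cauchy–Schwarz with weight $g$ (the term $-\sqrt{-1}g\,\partial\eta\wedge\bar\partial\eta$ in $B$ arises precisely from completing the square). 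Completeness of $\omega$ ensures that $C^\infty_c$ is dense in $\text{Dom}(D'')\cap\text{Dom}(D''^*)$ in the graph norm, so the inequality extends to all such $\alpha$. Adding $\|\sqrt{\tilde\lambda}\,\alpha\|^2$ to both sides upgrades the left-hand side to $\int_M \langle (B+\tilde\lambda I)\alpha,\alpha\rangle$.

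Set $C_v := \int_M \langle(B+\tilde\lambda I)^{-1}v,v\rangle_{Q,\omega}\,dV_\omega < +\infty$ and consider the linear map
\[J: \text{Dom}(D''^*_{q-1})\cap \text{Ker}(D''_q) \longrightarrow L^2_{n,q-1}\oplus L^2_{n,q}, \quad J(f) := \bigl(\sqrt{\eta+g^{-1}}\,D''^*f,\ \sqrt{\tilde\lambda}\,f\bigr).\]
For such $f$ one has $D''_qf=0$, so the augmented BKN reads $\int\langle(B+\tilde\lambda I)f,f\rangle \le \|J(f)\|^2$. Cauchy–Schwarz for the positive operator $B+\tilde\lambda I$ then yields
\[|\langle v, f\rangle|^2 \;\le\; C_v\,\|J(f)\|^2,\]
so the functional $J(f)\mapsto \langle v, f\rangle$ is well-defined and bounded on $\text{Im}(J)$ with norm $\le \sqrt{C_v}$. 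Extending it by Hahn–Banach to all of $L^2_{n,q-1}\oplus L^2_{n,q}$ and applying Riesz produces $u'\in L^2_{n,q-1}$ and $\tau\in L^2_{n,q}$ with $\|u'\|^2+\|\tau\|^2 \le C_v$ and
\[\langle v, f\rangle = \bigl\langle \sqrt{\eta+g^{-1}}\,D''^*f,\ u'\bigr\rangle + \bigl\langle \sqrt{\tilde\lambda}\,f,\ \tau\bigr\rangle.\]
Setting $u := \sqrt{\eta+g^{-1}}\,u'$ gives exactly the $L^2$ estimate in the lemma.

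It remains to read off the equation. For arbitrary $f \in \text{Dom}(D''^*_{q-1})$, decompose orthogonally $f = P_hf + (I-P_h)f$ with $(I-P_h)f \in \overline{\text{Im}(D''^*_q)} \subset \text{Ker}(D''^*_{q-1})$, so $D''^*f = D''^*(P_hf)$; moreover $\langle v, f\rangle = \langle v, P_hf\rangle$ because $v \in \text{Ker}(D''_q)$. Substituting $P_hf$ into the Riesz identity and using self-adjointness of $P_h$ yields, for every $f\in\text{Dom}(D''^*_{q-1})$,
\[\bigl\langle u,\,D''^*f\bigr\rangle \;=\; \bigl\langle v - P_h(\sqrt{\tilde\lambda}\,\tau),\ f\bigr\rangle,\]
which by $(D''^*)^* = D''$ is precisely $u\in\text{Dom}(D'')$ with $D''u + P_h(\sqrt{\tilde\lambda}\,\tau) = v$. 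The main obstacle is the twisted BKN inequality itself, whose derivation requires a careful application of the Kodaira–Nakano identity with weight $\eta$ together with a completing-the-square step with weight $g$; a secondary subtlety is the reduction to complete $\omega$, where uniform $L^2$-bounds and weak-compactness must be checked for the $\omega_\epsilon$-approximants, noting that the behavior of $(n,\bullet)$-pairings makes the $\omega_\epsilon \to \omega$ passage tractable.
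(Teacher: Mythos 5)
The paper does not actually prove this lemma (it is quoted from \cite{GMY5}), so your attempt can only be measured against the standard proof in the cited sources (\cite{guan-zhou13ap}, \cite{GMY5}), and the architecture of your argument does coincide with it: the twisted Bochner--Kodaira--Nakano inequality with the weight pair $(\eta,g)$, the augmented estimate obtained by adding $\|\sqrt{\tilde\lambda}\alpha\|^2$, the Cauchy--Schwarz step for the positive operator $B+\tilde\lambda I$, and the Riesz/Hahn--Banach duality applied to $J(f)=(\sqrt{\eta+g^{-1}}D''^*f,\sqrt{\tilde\lambda}f)$ on $\mathrm{Dom}(D''^*)\cap\mathrm{Ker}(D'')$, with the projection $P_h$ emerging exactly as you describe when the identity is extended to all of $\mathrm{Dom}(D''^*)$ via $f=P_hf+(I-P_h)f$ and self-adjointness of $P_h$ and of multiplication by $\sqrt{\tilde\lambda}$. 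That functional-analytic core, including the reading-off of $D''u+P_h(\sqrt{\tilde\lambda}\tau)=v$ from $(D''^*)^*=D''$, is correct as written.

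The genuine gap is the reduction to complete $\omega$, which you compress into one sentence about monotonicity. The hypothesis is only that $B+\tilde\lambda I=[\Xi,\Lambda_\omega]+\tilde\lambda I$ is positive definite for the \emph{fixed} $\omega$, where $\Xi=\eta\sqrt{-1}\Theta_Q-\sqrt{-1}\partial\bar\partial\eta-\sqrt{-1}g\,\partial\eta\wedge\bar\partial\eta$. This neither guarantees that $[\Xi,\Lambda_{\omega_\epsilon}]+\tilde\lambda I$ is positive definite (so that your scheme can even be run for each $\omega_\epsilon=\omega+\epsilon\hat\omega$), nor yields the comparison $\int_M\langle([\Xi,\Lambda_{\omega_\epsilon}]+\tilde\lambda I)^{-1}v,v\rangle_{\omega_\epsilon}dV_{\omega_\epsilon}\leq C_v$: the classical monotonicity lemma for $(n,q)$-forms that you invoke is proved for operators $[\beta,\Lambda]$ with $\beta$ Nakano semipositive, which is the extra structure available in the applications (compare Step 3 of the appendix, where the curvature form is bounded below by a semipositive form minus $N_1\tilde{\lambda}_{m'}\omega\wedge\omega_0\otimes\text{Id}$) but is not what the statement assumes and not what you verified. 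Moreover, for $(n,q)$-forms with $q\geq 1$ the pointwise norms \emph{decrease} as the metric increases, so your uniform bounds on $\tau_\epsilon$ and on the datum live in the $\epsilon$-dependent spaces $L^2(\omega_\epsilon)$ rather than in $L^2(\omega)$, and, more seriously, the error term in the $\epsilon$-approximate equation is $P_{h,\omega_\epsilon}(\sqrt{\tilde\lambda}\tau_\epsilon)$ with a projection depending on $\epsilon$ through the inner product; identifying its weak limit as $P_h(\sqrt{\tilde\lambda}\tau)$ for the fixed-$\omega$ projection is a substantive step of exactly the kind Lemma \ref{weakly converge lemma} is quoted to handle for varying bundle metrics, and it is not covered by ``uniform bounds and weak compactness must be checked.'' To close the argument you must either work under the semipositive-form formulation for which the monotonicity lemma and the positivity of $B_{\omega_\epsilon}+\tilde\lambda I$ are actually available, or give an independent argument for the passage $\epsilon\to 0$, including the convergence of the projected correction term; as it stands, this part of the proposal does not go through.
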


\begin{Lemma}[see \cite{FN80}]\label{Regularization}
	Let $X$ be a Stein manifold and $\varphi$ a plurisubharmonic function on $X$. Then there exists a sequence
		$\{\varphi_{n}\}_{n=1,\cdots}$ of smooth strongly plurisubharmonic functions such that
		$\varphi_{n} \downarrow \varphi$. 
\end{Lemma}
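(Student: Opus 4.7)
The plan is to combine a local convolution smoothing with a global patching via Demailly's regularized maximum, using the Stein structure of $X$ both to provide good exhausting open sets and to supply a strict plurisubharmonicity correction term. Since $X$ is Stein, it carries a smooth strictly plurisubharmonic exhaustion function $\rho$, and the sublevel sets $X_j := \{\rho < j\}$ form an exhaustion by relatively compact Stein open subsets with $\overline{X_j} \subset X_{j+1}$.

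For the local step, I would cover $X$ by coordinate charts $(U_\alpha, z_\alpha)$ biholomorphic to Euclidean balls in $\mathbb{C}^n$. On each chart, standard convolution of $\varphi$ with a nonnegative smooth radially symmetric mollifier $\chi_\epsilon$ in the $z_\alpha$-coordinates yields smooth plurisubharmonic functions $\varphi_{\alpha,\epsilon} := \varphi * \chi_\epsilon$ on a slightly shrunk sub-ball $U'_\alpha$, and these decrease pointwise to $\varphi|_{U'_\alpha}$ as $\epsilon \downarrow 0$. These local smoothings disagree on overlaps, which is the crux of the difficulty. To glue them into a single globally defined smooth plurisubharmonic function, I would invoke Demailly's regularized maximum $\max_\eta(y_1, \dots, y_p)$: a smooth, convex, componentwise nondecreasing function that coincides with the ordinary maximum when the arguments are pairwise separated by more than $\eta$, and whose value on plurisubharmonic inputs is again plurisubharmonic. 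For each level $n$, one chooses a locally finite refinement $\{V_\beta\}$ of $\{U'_\alpha\}$ adapted to the exhaustion $\{X_j\}$, selects $\epsilon_\beta$ small enough that $\varphi \leq \varphi_{\beta,\epsilon_\beta} \leq \varphi + 2^{-n-1}$ on $V_\beta$, and defines $\psi_n$ as the regularized max, with parameter $\eta_n$, of the locally finite family $\{\varphi_{\beta,\epsilon_\beta} + c_{\beta,n}\}$, where small shifts $c_{\beta,n}$ are chosen so that each summand dominates its neighbors away from their common overlaps.

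The final step is to set $\varphi_n := \psi_n + \delta_n \rho$ for a sequence $\delta_n \downarrow 0$ small enough to preserve $\varphi \leq \varphi_n \leq \varphi + 2^{-n}$; strict plurisubharmonicity then follows from that of $\rho$. To upgrade pointwise convergence to monotone decrease $\varphi_n \downarrow \varphi$, one can enforce $\varphi_{n+1} \leq \varphi_n$ inductively by replacing $\psi_{n+1}$ with a regularized minimum of $\psi_{n+1}$ and $\psi_n$, or by telescoping the construction so that each level is obtained as a regularized maximum involving the previous one after a uniform downward shift. The principal obstacle is the bookkeeping in the patching step: one must simultaneously arrange local finiteness of the cover, the pairwise-separation condition needed for $\max_{\eta_n}$ to reduce to the ordinary maximum away from a prescribed neighborhood of each overlap region, the dominance of each shifted smoothing over its neighbors, and uniform control of the deviation $\varphi_n - \varphi$ on $X_j$. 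Once these parameters are matched to the exhaustion $\{X_j\}$, the construction is essentially that of Richberg and Fornaess--Narasimhan and produces the required decreasing sequence of smooth strictly plurisubharmonic functions.
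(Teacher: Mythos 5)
The paper does not prove this lemma at all --- it is quoted verbatim from Forn{\ae}ss--Narasimhan \cite{FN80} --- so the only question is whether your sketch would actually work, and there is a genuine gap at its core. Your patching step requires choosing $\epsilon_\beta$ so that $\varphi \leq \varphi_{\beta,\epsilon_\beta} \leq \varphi + 2^{-n-1}$ on $V_\beta$, i.e.\ \emph{uniform} closeness of the convolution to $\varphi$ on each chart. For a general plurisubharmonic $\varphi$, which is only upper semicontinuous and may take the value $-\infty$, this is impossible: for $\varphi(z)=\log|z|$ and any $\varepsilon>0$ one has $(\varphi*\chi_\varepsilon)(0)>-\infty=\varphi(0)$, so $\sup_{V}(\varphi*\chi_\varepsilon-\varphi)=+\infty$ for every $\varepsilon$. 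The convolutions decrease to $\varphi$ only pointwise, not uniformly, and all of your subsequent bookkeeping (the constants $c_{\beta,n}$, the dominance conditions on overlaps needed for $\max_{\eta_n}$ to be smooth and well defined where charts end, and the global estimate $\varphi\le\varphi_n\le\varphi+2^{-n}$ used both for convergence and for the constant-shift monotonicity argument) rests on that unavailable uniform bound. This is exactly the reason the Richberg gluing technique proves smooth approximation only for \emph{continuous} (strictly) plurisubharmonic functions; the content of the cited Forn{\ae}ss--Narasimhan theorem is precisely the passage to arbitrary plurisubharmonic functions, and the standard proofs on a Stein manifold go a different way: embed $X$ properly into $\mathbb{C}^N$, use a Docquier--Grauert holomorphic retraction $\pi$ of a tubular neighborhood onto $X$, pull $\varphi$ back to the psh function $\varphi\circ\pi$ on an open set of $\mathbb{C}^N$, and regularize there by global convolution (with the monotonicity in $\varepsilon$ giving the decreasing sequence), adding a small multiple of a strictly psh exhaustion at the end.

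A secondary but real error: your first proposed fix for monotonicity, replacing $\psi_{n+1}$ by a ``regularized minimum'' of $\psi_{n+1}$ and $\psi_n$, does not stay within the class, since the minimum of two plurisubharmonic functions is in general not plurisubharmonic and no smoothing of it repairs that. The alternative telescoping/constant-shift device would indeed give $\varphi_{n+1}\le\varphi_n$, but only under the uniform two-sided bounds $\varphi\le\psi_n\le\varphi+a_n$, which, as explained above, cannot be arranged by chart-by-chart convolution when $\varphi$ is not continuous. So as written the proposal proves (a version of) Richberg's theorem, not the statement of the lemma.
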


\begin{Lemma}[Lemma 6.9 in \cite{Demailly82}]\label{barpartialv=h}
	Let $\Omega$ be an open subset of $\mathbb{C}^n$ and $Z$ be a complex analytic subset of $\Omega$. Assume that $v$ is a $(p,q-1)-$form with $L^2_{\text{loc}}$ coefficients and $h$ is an $L^1_{\text{loc}}$ $(p,q)-$form coefficients such that $\bar{\partial}v=h$ on $\Omega\setminus Z$ (in the sense of distribution theory). Then $\bar{\partial}v=h$ on $\Omega$.
\end{Lemma}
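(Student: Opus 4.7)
The plan is to prove the identity locally by a cut-off argument: approximate the indicator function of $\Omega\setminus Z$ by smooth cut-offs $\chi_\epsilon$ supported in $\Omega\setminus Z$, test the given distributional identity on $\Omega\setminus Z$ against $\chi_\epsilon\psi$ for each smooth compactly supported $(n-p,n-q-1)$-form $\psi$, and pass to the limit. Since the claim is local, it suffices to prove $\bar\partial v=h$ in a neighborhood $U$ of an arbitrary point $z_0\in Z$. Because $Z$ is analytic, one can shrink $U$ so that there exists a holomorphic function $f$ on $U$, not identically zero, vanishing on $Z\cap U$.

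Choose $\rho\in C^\infty(\mathbb{R};[0,1])$ with $\rho(t)=1$ for $t\le 1/2$ and $\rho(t)=0$ for $t\ge 1$, and set
\[\chi_\epsilon:=\rho\!\left(\frac{\log|f|}{\log\epsilon}\right)\qquad\text{for }0<\epsilon<1,\]
extended by $0$ on $\{f=0\}$. Then $\chi_\epsilon$ vanishes on $\{|f|\le\epsilon\}$, which is a neighborhood of $Z\cap U$, equals $1$ on $\{|f|\ge\epsilon^{1/2}\}$, and hence converges pointwise to $1$ on $U\setminus Z$ as $\epsilon\to 0$. A direct computation gives
\[|\bar\partial\chi_\epsilon|\le\frac{C}{|\log\epsilon|}\,\frac{|\bar\partial f|}{|f|}\quad\text{on}\quad A_\epsilon:=\{\epsilon\le|f|\le\epsilon^{1/2}\},\]
and slicing transversally to $\{f=0\}$ away from the measure-zero critical set of $f$ yields $\int_{A_\epsilon\cap U'}|f|^{-2}\,dV=O(|\log\epsilon|)$ for any $U'\Subset U$; therefore
\[\int_{U'}|\bar\partial\chi_\epsilon|^2\,dV=O(|\log\epsilon|^{-1})\longrightarrow 0.\]

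Now fix a smooth $(n-p,n-q-1)$-form $\psi$ compactly supported in $U$. Since $\chi_\epsilon\psi$ has compact support in $\Omega\setminus Z$, the hypothesis gives
\[\int h\wedge\chi_\epsilon\psi=(-1)^{p+q+1}\int v\wedge\bar\partial(\chi_\epsilon\psi)=(-1)^{p+q+1}\int v\wedge\bar\partial\chi_\epsilon\wedge\psi+(-1)^{p+q+1}\int\chi_\epsilon v\wedge\bar\partial\psi.\]
By dominated convergence (using $h\in L^1_{\mathrm{loc}}$, $v\in L^2_{\mathrm{loc}}\subset L^1_{\mathrm{loc}}$, $|\chi_\epsilon|\le 1$ and $\chi_\epsilon\to 1$ a.e.) the left-hand side tends to $\int h\wedge\psi$ and the second right-hand term tends to $\int v\wedge\bar\partial\psi$. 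The remaining term is killed by Cauchy--Schwarz together with the cut-off estimate:
\[\left|\int v\wedge\bar\partial\chi_\epsilon\wedge\psi\right|\le C\|\psi\|_\infty\|v\|_{L^2(\operatorname{supp}\psi)}\left(\int_{U'}|\bar\partial\chi_\epsilon|^2\right)^{1/2}\longrightarrow 0.\]
Passing to the limit yields $\int h\wedge\psi=(-1)^{p+q+1}\int v\wedge\bar\partial\psi$ for every such $\psi$, i.e.\ $\bar\partial v=h$ on $\Omega$ in the sense of distributions.

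The technical heart is the estimate $\int|\bar\partial\chi_\epsilon|^2\to 0$: this is where the hypothesis $v\in L^2_{\mathrm{loc}}$ (rather than merely $L^1_{\mathrm{loc}}$) is essential, and where the logarithmic profile of $\chi_\epsilon$, together with the thickness $\epsilon\le|f|\le\epsilon^{1/2}$ of the transition shell, is precisely engineered so that the product $|\log\epsilon|^{-2}\cdot|\log\epsilon|$ tends to zero. Once this cut-off $L^2$ bound is in hand, the rest of the argument is routine integration by parts and dominated convergence.
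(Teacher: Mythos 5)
Your overall strategy---localize, pick a holomorphic $f\not\equiv 0$ vanishing on $Z\cap U$, use logarithmic cutoffs $\chi_\epsilon=\rho(\log|f|/\log\epsilon)$ supported off $\{f=0\}$ with $\|\bar\partial\chi_\epsilon\|_{L^2}\to 0$, then integrate by parts and conclude by Cauchy--Schwarz and dominated convergence (this is exactly where $v\in L^2_{\mathrm{loc}}$ and $h\in L^1_{\mathrm{loc}}$ enter)---is the classical route behind Demailly's Lemma 6.9, which the paper only cites without reproducing a proof. But the technical heart, which you yourself single out, is not established as written. The claimed estimate $\int_{A_\epsilon\cap U'}|f|^{-2}\,dV=O(|\log\epsilon|)$ holds only when the zero divisor of $f$ is reduced and smooth, and this cannot be arranged in general: for $f=z_1^2$ one gets $\int_{A_\epsilon}|f|^{-2}\,dV\asymp\epsilon^{-1}$, and even for the reduced normal crossing $f=z_1z_2$ a direct computation gives $\int_{A_\epsilon}|z_1z_2|^{-2}\,dV\asymp|\log\epsilon|^{2}$, so your final bound $|\log\epsilon|^{-2}\int_{A_\epsilon}|f|^{-2}$ is only $O(1)$ and the error term does not vanish. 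Singular zero sets are unavoidable (e.g.\ if $Z$ is the union of the three coordinate axes in $\mathbb{C}^3$, every holomorphic $f$ vanishing on $Z$ has singular zero set), and the ``slicing transversally away from the critical set'' justification implicitly assumes $df\neq 0$ along all of $\{f=0\}$, i.e.\ precisely the easy case. (Also, the factor in your gradient bound should be $|\partial f|=|df|$, not $|\bar\partial f|$, which is zero.)

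What the argument actually needs is $\int_{A_\epsilon\cap U'}|df|^2/|f|^2\,dV=o(|\log\epsilon|^{2})$ (in fact it is $O(|\log\epsilon|)$), i.e.\ one must keep $|df|$ inside the integral rather than bounding it by its supremum; this is true but requires a genuine argument, for instance the local integrability of $|df|^2/\bigl(|f|^2\log^2|f|\bigr)$, which one can deduce from the plurisubharmonicity of $-\log(-\log|f|^2)$ together with a Chern--Levine--Nirenberg type bound, or by reducing $f$ to a monomial via resolution of singularities. Alternatively, and more elementarily, you can bypass the general estimate by induction on $\dim Z$: near a regular point, $Z$ is locally contained in a smooth hypersurface $\{z_1=0\}$, where your computation with $f=z_1$ is correct; this yields $\bar\partial v=h$ on $\Omega\setminus Z_{\mathrm{sing}}$, and one then repeats the argument with $Z$ replaced by the analytic set $Z_{\mathrm{sing}}$ of strictly smaller dimension, terminating in finitely many steps. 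With either repair, the remaining bookkeeping in your proof (integration by parts against $\chi_\epsilon\psi$, Cauchy--Schwarz, dominated convergence) goes through as written.
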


In the following, we give the proof of Lemma \ref{L2ext}.

Note that $M$ is a Stein manifold, there exists a smooth plurisubharmonic
exhaustion function $P$ on $M$. Let $M_j:=\{P<j\}$ $(k=1,2,...,) $. We choose $P$ such that
$M_1\ne \emptyset$.

Then $M_1 \Subset  M_2\Subset  ...\Subset
M_j\Subset  M_{j+1}\Subset  ...$ and $\cup_{j=1}^{+\infty} M_j=M$. Each $M_j$ is a Stein manifold.

For any smooth metric $\hat{h}$ on $M$, since $h$ has a positive locally lower bound, we can find some $C_K>0$ such that $|e|_h\geq C_K|e|_{\hat{h}}$ on $K$ for any compact subset $K$ of $M$ and any local holomorphic section $e$ of $E$. Then it follows from $\int_{M}|u|_h^2e^{-\tilde{\Psi}_{w_0}}<+\infty$ that $\int_K|u|^2_{\hat{h}}<+\infty$ for any compact subset $K$ of $M$.

\

\emph{Step 1: Regularization of $\tilde{\Psi}$. }

According to Lemma \ref{Regularization}, we can find a sequence of smooth strongly plurisubharmonic functions $\{\tilde{\Psi}_m\}_{m=1}^{+\infty}$ on $\Omega$ such that $\tilde{\Psi}_{m} \downarrow \tilde{\Psi}$ on $\Omega$.

Additionally, Let $r_i\in (0,r)$ be a sequence of real numbers such that $r_i\to r$ as $i\to+\infty$, and $D_i:=\{|w-w_0|<r_i\}\subset D$, $\Omega_i:=M\times D_i$. If for any $i$, there exists some extension $\tilde{u}_i$ of $u$ such that
\[\int_{\Omega_i}|\tilde{u}_i|_{p_2^*(h)}^2e^{-\tilde{\Psi}}\leq\frac{1}{\pi r_i^2}\int_{M}|u|_h^2e^{-\tilde{\Psi}_{w_0}},\]
then by Montel's theorem and the diagonal method, we can find an extension $\tilde{u}$ of $u$ on $\Omega$ such that
\[\int_{\Omega}|\tilde{u}|_{p_2^*(h)}^2e^{-\tilde{\Psi}}\leq\frac{1}{\pi r^2}\int_{M}|u|_h^2e^{-\tilde{\Psi}_{w_0}}.\]

Since $\tilde{\Psi}$ is bounded, and $M_j\times D_i$ is relatively compact in $\Omega$, combining with the above discussion, we can assume $\tilde{\Psi}_m$ is uniformly bounded in $M_j\times D$ with respect to $m$ for any fixed $j$ (see \cite{guan-zhou13ap}).

\

\emph{Step 2: Recall some constructions. }

Let $t_0\in (0,+\infty)$, $B>0$. In the following, to simplify our notations, we denote $b_{t_0,B}(t)$ by $b(t)$ and $v_{t_0,B}(t)$ by $v(t)$.

Let $\epsilon \in (0,\frac{1}{8}B)$. Let $\{v_\epsilon\}_{\epsilon \in
(0,\frac{1}{8}B)}$ be a family of smooth increasing convex functions on $\mathbb{R}$, such
that:
\par
(1) $v_{\epsilon}(t)=t$ for $t\ge-t_0-\epsilon$, $v_{\epsilon}(t)=constant$ for
$t<-t_0-B+\epsilon$;\par
(2) $v_{\epsilon}{''}(t)$ are convergence pointwisely
to $\frac{1}{B}\mathbb{I}_{(-t_0-B,-t_0)}$,when $\epsilon \to 0$, and $0\le
v_{\epsilon}{''}(t) \le \frac{2}{B}\mathbb{I}_{(-t_0-B+\epsilon,-t_0-\epsilon)}$
for ant $t \in \mathbb{R}$;\par
(3) $v_{\epsilon}{'}(t)$ are convergence pointwisely to $b(t)$ which is a continuous
function on $\mathbb{R}$ when $\epsilon \to 0$ and $0 \le v_{\epsilon}{'}(t) \le 1$ for any
$t\in \mathbb{R}$.\par
One can construct the family $\{v_\epsilon\}_{\epsilon \in (0,\frac{1}{8}B)}$  by
 setting
\begin{equation}\nonumber
\begin{split}
v_\epsilon(t):=&\int_{-\infty}^{t}(\int_{-\infty}^{t_1}(\frac{1}{B-4\epsilon}
\mathbb{I}_{(-t_0-B+2\epsilon,-t_0-2\epsilon)}*\rho_{\frac{1}{4}\epsilon})(s)ds)dt_1\\
&-\int_{-\infty}^{-t_0}(\int_{-\infty}^{t_1}(\frac{1}{B-4\epsilon}
\mathbb{I}_{(-t_0-B+2\epsilon,-t_0-2\epsilon)}*\rho_{\frac{1}{4}\epsilon})(s)ds)dt_1-t_0,
\end{split}
\end{equation}
where $\rho_{\frac{1}{4}\epsilon}$ is the kernel of convolution satisfying
$\text{supp}(\rho_{\frac{1}{4}\epsilon})\subset
(-\frac{1}{4}\epsilon,{\frac{1}{4}\epsilon})$.
Then it follows that
\begin{equation}\nonumber
v_\epsilon{''}(t)=\frac{1}{B-4\epsilon}
\mathbb{I}_{(-t_0-B+2\epsilon,-t_0-2\epsilon)}*\rho_{\frac{1}{4}\epsilon}(t),
\end{equation}
and
\begin{equation}\nonumber
v_\epsilon{'}(t)=\int_{-\infty}^{t}(\frac{1}{B-4\epsilon}
\mathbb{I}_{(-t_0-B+2\epsilon,-t_0-2\epsilon)}*\rho_{\frac{1}{4}\epsilon})(s)ds.
\end{equation}

Let $\psi_0:=p_1^*(2\log|w-w_0|-2\log r)$ be a plurisubharmonic function on $\Omega$. Let $\eta=s(-v_\epsilon(\psi_0))$ and $\phi=u(-v_\epsilon(\psi_0))$, where $s \in
C^{\infty}((0,+\infty))$ satisfies $s>0$ and $u\in C^{\infty}((0,+\infty))$, such that $s'(t)\neq 0$ for any $t$, $u''s-s''>0$
and $s'-u's=1$.

Recall that $(M,E,\Sigma,M_j,h,h_{j,m'})$ is a singular hermitian metric on $E$. Then there exists a sequence of hermitian metrics $\{h_{j,m'}\}_{m'=1}^{+\infty}$ on $M_{j+1}$ of class $C^2$ such that $\lim\limits_{m'\to+\infty}h_{j,m'}=h$ almost everywhere on $M_{j+1}$ and $\{h_{j,m'}\}_{m'=1}^{+\infty}$ satisfies the conditions of Definition \ref{singular nak}. We will fix $j$ until the last step (Step 9), thus we simply denote $h_{j,m'}$ by $h_{m'}$. Denote that $\tilde{h}:=p_2^*(h_{m'})e^{-\Phi_m}$, where $\Phi_m:=\tilde{\Psi}_m+\phi+\psi_0$.

\

\emph{Step 3: Solving $\bar{\partial}$-equation with error term. }

Set $B=[\eta \sqrt{-1}\Theta_{\tilde{h}}-\sqrt{-1}\partial \bar{\partial}
\eta\otimes\text{Id}_{E'}-\sqrt{-1}g\partial\eta \wedge\bar{\partial}\eta\otimes\text{Id}_{E'}, \Lambda_{\omega\wedge\omega_0}]$, where
$\omega_0=\frac{\sqrt{-1}}{2}dw\wedge d\bar{w}$ is the standard K\"{a}hler form on $\mathbb{C}$, and $g$ is a positive function. We will determine $g$ by calculations. On $M_j\times (D\setminus\{w_0\})$, direct calculation shows that
\begin{equation}\nonumber
	\begin{split}
		\partial\bar{\partial}\eta=&
		-s'(-v_{\epsilon}(\psi_0))\partial\bar{\partial}(v_{\epsilon}(\psi_0))
		+s''(-v_{\epsilon}(\psi_0))\partial(v_{\epsilon}(\psi_0))\wedge
		\bar{\partial}(v_{\epsilon}(\psi_0)),\\
		\partial\bar{\partial}\phi=&-u'(-v_{\epsilon}(\psi_0))\partial\bar{\partial}(v_{\epsilon}(\psi_0))+u''(-v_{\epsilon}(\psi_0))\partial(v_{\epsilon}(\psi_0))\wedge\bar{\partial}(v_{\epsilon}(\psi_0)),\\
		\eta\Theta_{\tilde{h}}=&\eta\partial\bar{\partial}\phi\otimes\text{Id}_{E'}+\eta\Theta_{h_{m'}\boxtimes h_0}+\eta\partial\bar{\partial}(\tilde{\Psi}_m)\otimes\text{Id}_{E'}+\eta\partial\bar{\partial}\psi_0\otimes \text{Id}_{E'}\\
		=&su''(-v_{\epsilon}(\psi_0))\partial(v_{\epsilon}(\psi_0))\wedge\bar{\partial}(v_{\epsilon}(\psi_0))\otimes\text{Id}_{E'}-su'(-v_{\epsilon}(\psi_0))\partial\bar{\partial}(v_{\epsilon}(\psi_0))\otimes\text{Id}_{E'}\\
		&+s\Theta_{p_2^*(h_{m'})}+s\partial\bar{\partial}(\tilde{\Psi}_m)\otimes\text{Id}_{E'}.
\end{split}
\end{equation}
Therefore,
\begin{equation}\nonumber
	\begin{split}
		&\eta\sqrt{-1}\Theta_{\tilde{h}}-\sqrt{-1}\partial\bar{\partial}\eta\otimes\text{Id}_E-\sqrt{-1}g\partial\eta\wedge\bar{\partial}\eta\otimes\text{Id}_E\\
		=&s\Theta_{p_2^*(h_{m'})}+s\partial\bar{\partial}(\tilde{\Psi}_m)\otimes\text{Id}_{E'}\\
		&+(s'-su')(v'_{\epsilon}(\psi_0)\sqrt{-1}\partial\bar{\partial}\psi_0+v''_{\epsilon}(\psi_0)\sqrt{-1}\partial\psi_0\wedge\bar{\partial}\psi_0)\otimes\text{Id}_{E'}\\
		&+((u''s-s'')-gs'^2)\sqrt{-1}\partial(v_{\epsilon}(\psi_0))\wedge\bar{\partial}(v_{\epsilon}(\psi_0))\otimes\text{Id}_{E'}.
	\end{split}
\end{equation}
We omit the composition item $(-v_{\epsilon}(\psi_0))$ after $s'-su'$ and $(u''s-s'')-gs'^2$ in the above equalities.

Note that $u''s-s''>0$. Let $g=\frac{u''s-s''}{s'^2}(-v_{\epsilon}(\psi_0))$. We have $\eta+g^{-1}=(s+\frac{s'^2}{u''s-s''})(-v_{\epsilon}(\psi_0))$. Note that $s'-su'=1$, $0\leq v'_{\epsilon}(\psi_0)\leq 1$. Then
\begin{equation*}
	\begin{split}
		&\eta\sqrt{-1}\Theta_{\tilde{h}}-\sqrt{-1}\partial\bar{\partial}\eta\otimes\text{Id}_{E'}-\sqrt{-1}\partial\eta\wedge\bar{\partial}\eta\otimes\text{Id}_{E'}\\
		=&s\Theta_{p_2^*(h_{m'})}+s\partial\bar{\partial}(\tilde{\Psi}_m)\otimes\text{Id}_{E'}\\
		&+v'_{\epsilon}(\psi_0)\sqrt{-1}\partial\bar{\partial}\psi_0\otimes\text{Id}_{E'}+v''_{\epsilon}(\psi_0)\sqrt{-1}\partial\psi_0\wedge\bar{\partial}\psi_0\otimes\text{Id}_{E'}\\
		=&v'_{\epsilon}(\psi_0)\sqrt{-1}\partial\bar{\partial}\psi_0\otimes\text{Id}_{E'}+v''_{\epsilon}(\psi_0)\sqrt{-1}\partial\psi_0\wedge\bar{\partial}\psi_0\otimes\text{Id}_{E'}\\
		&+s(\Theta_{h_{m'}}+\tilde{\lambda}_{m'}\omega\otimes\text{Id}_E)\wedge\omega_0\otimes\text{Id}_{D\times\mathbb{C}}+s\partial\bar{\partial}\tilde{\Psi}_m\otimes\text{Id}_{E'}\\
		&-\tilde{\lambda}_{m'}\omega\wedge\omega_0\otimes\text{Id}_{E'}\\
		\geq&v''_{\epsilon}(\psi_0)\sqrt{-1}\partial\psi_0\wedge\bar{\partial}\psi_0\otimes\text{Id}_{E'}-s\tilde{\lambda}_{m'}\omega\wedge\omega_0\otimes\text{Id}_{E'}.
	\end{split}
\end{equation*}
Here from Definition \ref{singular nak}, $\tilde{\lambda}_{m'}$ satisfies $\Theta_{h_{m'}}(E)\geq_{Nak}-\tilde{\lambda}_{m'}\omega\otimes\text{Id}_{E}$ on $M_j$.

It can be seen that $s(-v_{\epsilon}(\psi_0))$ is uniformly upper bounded on $M_j\times D$ with respect to $j,m,m',\epsilon$. Let $N_1$ be the uniformly upper bound of $s(-v_{\epsilon}(\psi_0))$ on $M_j\times D$. Then on $M_j\times (D\setminus\{w_0\})$, we have
\begin{equation*}
	\begin{split}
		&\eta\sqrt{-1}\Theta_{\tilde{h}}-\sqrt{-1}\partial\bar{\partial}\eta\otimes\text{Id}_{E'}-\sqrt{-1}\partial\eta\wedge\bar{\partial}\eta\otimes\text{Id}_{E'}\\
		\geq&v''_{\epsilon}(\psi_0)\sqrt{-1}\partial\psi_0\wedge\bar{\partial}\psi_0\otimes\text{Id}_{E'}-N_1\tilde{\lambda}_{m'}\omega\wedge\omega_0\otimes\text{Id}_{E'}.
	\end{split}
\end{equation*}
Then for any $E'-$valued $(n+1,1)$ form $\alpha$, we have
\begin{equation}\label{B+lambdaIdE'}
	\begin{split}
		&\langle(B+N_1\tilde{\lambda}_{m'}\text{Id}_{E'})\alpha,\alpha\rangle_{\tilde{h}}\\
		\geq&\langle[v''_{\epsilon}(\psi_0)\partial(\psi_0)\wedge\bar{\partial}(\psi_0)\otimes\text{Id}_{E'},\Lambda_{\omega\wedge\omega_0}]\alpha,\alpha\rangle_{\tilde{h}}\\
		=&\langle(v''_\epsilon(\psi_0)\bar{\partial}(\psi_0)
		\wedge(\alpha\llcorner(\bar{\partial}\psi_0)^{\sharp}))\alpha,\alpha\rangle_{\tilde
			h}.
	\end{split}
\end{equation}
It follows from Lemma \ref{semipositive} that $B+N_1\tilde{\lambda}_{m'}\text{Id}_{E'}$ is semipositive. Denote $\hat{\lambda}_{m'}:=\tilde{\lambda}_{m'}+\frac{1}{m'}$, then $\tilde{B}:=B+N_1\hat{\lambda}_{m'}\text{Id}_{E'}$ is positive. According to inequality (\ref{B+lambdaIdE'}), we have
\begin{equation}
	\label{cs inequality}
	\begin{split}
		|\langle
		v''_\epsilon(\psi_0)\bar{\partial}\psi_0\wedge\gamma,\tilde{\alpha}\rangle_
		{\tilde h}|^2=
		&|\langle
		v''_\epsilon(\psi_0)\gamma,\tilde{\alpha}\llcorner(\bar{\partial}\psi_0)^{\sharp}
		\rangle_{\tilde h}|^2\\
		\le&\langle
		(v''_\epsilon(\psi_0)\gamma,\gamma)
		\rangle_{\tilde h}
		(v''_\epsilon(\psi_0))|\tilde{\alpha}\llcorner(\bar{\partial}\psi_0)^{\sharp}|^2_{\tilde
			h}\\
		=&\langle
		(v''_\epsilon(\psi_0)\gamma,\gamma)
		\rangle_{\tilde h}
		\langle
		(v''_\epsilon(\psi_0))\bar{\partial}\psi_0\wedge
		(\tilde{\alpha}\llcorner(\bar{\partial}\psi_0)^{\sharp}),\tilde{\alpha}
		\rangle_{\tilde h}\\
		\le&\langle
		(v''_\epsilon(\psi_0)\gamma,\gamma)
		\rangle_{\tilde h}
		\langle
		\tilde{B}\tilde{\alpha},\tilde{\alpha})
		\rangle_{\tilde h}
	\end{split}
\end{equation}
for any $E'-$valued $(n+1,0)$ form $\gamma$ and $E'-$valued $(n+1,1)$ form $\tilde{\alpha}$.

Let $f:=u\wedge dw$ be the trivial extension of $u$ from $M_j\times\{w_0\}$ to $\Omega$.
Then $\mu:=\bar{\partial}\big((1-v_{\epsilon}'(\psi_0))f\big)$ is well defined and smooth on $M_j\times D$. Note that
\[\mu=-\bar{\partial}v'_{\epsilon}(\psi_0)\wedge f.\]
Take $\gamma=f$, $\tilde{\alpha}=\tilde{B}^{-1}\mu$. Then it follows from inequality (\ref{cs inequality}) that
\[\langle\tilde{B}^{-1}\mu,\mu\rangle_{\tilde{h}}\leq v''_{\epsilon}(\psi_0)|f|^2_{\tilde{h}}.\]
Thus we have
\begin{equation}
    \int_{M_j\times (D\setminus\{w_0\})}\langle
\tilde{B}^{-1}\mu,\mu\rangle_{\tilde h}
\leq\int_{M_j\times (D\setminus\{w_0\})}v''_\epsilon(\psi_0)|f|^2_{\tilde{h}}
\end{equation}

Recall that $\tilde{h}=p_2^*(h_{m'})e^{-\Phi_m}$ and $\Phi_m=\phi+\tilde{\Psi}_m+\psi_0$. Note that $0\le
v_{\epsilon}''(t) \le \frac{2}{B}\mathbb{I}_{(-t_0-B+\epsilon,-t_0-\epsilon)}$, $e^{-\phi}$ is bounded function on $M_j\times D$, $h_{m'}\leq h$, and $\tilde{\Psi}_m$ is lower bounded on $\Omega$. Then
    \begin{equation*}
        \begin{split}
        &\int_{M_j\times (D\setminus\{w_0\})}v''_\epsilon(\psi_0)|f|^2_{\tilde{h}}\\
        \le & e^{t_0+B-\epsilon} \sup_{M_j\times D}(e^{-\phi-\tilde{\Psi}_m})\int_{M_j\times D}\frac{2}{B}\mathbb{I}_{(-t_0-B+\epsilon,-t_0-\epsilon)}|f|_{h\boxtimes h_0}^2<+\infty.
    \end{split}
    \end{equation*}

It is clear that $M_j\times (D\setminus\{w_0\})$ carries a complete K\"ahler metric since $M_j$ is Stein. Then it follows from Lemma \ref{B+lambdaI} that there exists
\[u_{m,m',\epsilon,j}\in L^2(M_j\times (D\setminus\{w_0\}), K_{\Omega}\otimes {E'},p_2^*(h_{m'})e^{-\Phi_m}),\]
\[\mathbf{h}_{m,m',\epsilon,j}\in L^2(M_j\times (D\setminus\{w_0\}), \wedge^{n+1,1}T^*\Omega \otimes {E'},p_2^*(h_{m'})e^{-\Phi_m}),\]
such that $\bar\partial u_{m,m',\epsilon,j}+P_{m,m'}\big(\sqrt{N_1 \hat{\lambda}_{m'}}\mathbf{h}_{m,m',\epsilon,j}\big)=\mu$ holds on $M_j\times (D\setminus\{w_0\})$ where $P_{m,m'}:L^2(M_j\times (D\setminus\{w_0\}), \wedge^{n+1,1}T^*\Omega \otimes {E'},p_2^*(h_{m'})e^{-\Phi_m})\to \text{Ker}{D''}$ is the orthogonal projection, and

\begin{equation}\nonumber
\begin{split}
&\int_{M_j\times (D\setminus\{w_0\})}
\frac{1}{\eta+g^{-1}}|u_{m,m',\epsilon,j}|^2_{p_2^*(h_{m'})}e^{-\Phi_m}+
\int_{M_j\times (D\setminus\{w_0\})}
|\mathbf{h}_{m,m',\epsilon,j}|^2_{p_2^*(h_{m'})}e^{-\Phi_m} \\
\le&
\int_{M_j\times (D\setminus\{w_0\})}
\langle
(B+N_1 \hat{\lambda}_{m'} \text{Id}_{E'})^{-1}\mu,\mu
\rangle_{\tilde h}\\
\le&
\int_{M_j\times (D\setminus\{w_0\})}
v''_{\epsilon}(\psi_0)|f|^2_{p_2^*(h_{m'})}e^{-\Phi_m}\\
<&+\infty.
\end{split}
\end{equation}

Assume that we can choose
$\eta$ and $\phi$ such that
$(\eta+g^{-1})^{-1}=e^{v_\epsilon(\psi_0)}e^{\phi}$. Then we have

\begin{equation}
\label{estimate 1}
\begin{split}
&\int_{M_j\times (D\setminus\{w_0\})}
|u_{m,m',\epsilon,j}|^2_{p_2^*(h_{m'})} e^{v_\epsilon(\psi_0)-\tilde{\Psi}_m-\psi_0}\\
&+\int_{M_j\times (D\setminus\{w_0\})}
|\mathbf{h}_{m,m',\epsilon,j}|^2_{p_2^*(h_{m'})} e^{-\phi-\tilde{\Psi}_m-\psi_0}\\
\le&
\int_{M_j\times (D\setminus\{w_0\})}
v''_{\epsilon}(\psi_0)|f|^2_{p_2^*(h_{m'})} e^{-\phi-\tilde{\Psi}_m-\psi_0}
<+\infty.
\end{split}
\end{equation}

It is clear that
\[u_{m,m',\epsilon,j}\in L^2(M_j\times D, K_{\Omega}\otimes E',p_2^*(h_{m'})e^{-\Phi_m}),\]
\[\mathbf{h}_{m,m',\epsilon,j}\in L^2(M_j\times D, \wedge^{n+1,1}T^*\Omega\otimes E', p_2^*(h_{m'})e^{-\Phi_m}).\]
In addition, it follows from inequality \eqref{estimate 1} that
\begin{equation}
\label{estimate 2}
\begin{split}
&\int_{M_j\times D}|u_{m,m',\epsilon,j}|^2_{p_2^*(h_{m'})}e^{v_\epsilon(\psi_0)-\tilde{\Psi}_m-\psi_0}\\
&+\int_{M_j\times D}|\mathbf{h}_{m,m',\epsilon,j}|^2_{p_2^*(h_{m'})}e^{-\phi-\tilde{\Psi}_m-\psi_0}\\
\leq&\int_{M_j\times D}
v''_{\epsilon}(\psi_0)|f|^2_{p_2^*(h_{m'})} e^{-\phi-\tilde{\Psi}_m-\psi_0}
<+\infty.
\end{split}
\end{equation}
By the construction of $v_{\epsilon}(t)$, we know $e^{v_{\epsilon}(\psi_0)}$ has a positive lower bound on $M_j\times D$. By the constructions of $v_{\epsilon}(t)$ and $u$, we know $e^{-\phi}=e^{-u(-v_{\epsilon}(\psi_0))}$ has a positive lower bound on $M_j\times D$. Also we know $e^{-\tilde{\Psi}_m}$ has a positive lower bound on $M_j\times D$. Note that $h_{m'}$ is $C^2$ smooth on $M_j\Subset M$. Hence by Lemma \ref{barpartialv=h} we have
\begin{equation}
\label{d-bar realation u,h,lamda 1}
D''u_{m,m',\epsilon,j}+P_{m,m'}\big(\sqrt{N_1 \hat{\lambda}_{m'}}\mathbf{h}_{m,m',\epsilon,j}\big)=\mu
\end{equation}
on $M_j\times D$.

\

\emph{Step 4: Letting $m'\to+\infty$. }

Note that $\sup_{M_j\times D}(e^{-u(-v_{\epsilon}(\psi_0))})<+\infty$,
$0\le
v_{\epsilon}{''}(t) \le \frac{2}{B}\mathbb{I}_{(-t_0-B+\epsilon,-t_0-\epsilon)}$ and $|e_x|_{h_{m'}}\le |e_x|_{h_{m'+1}}\le |e_x|_{h}$ for any $m'\in \mathbb{Z}_{\ge 0}$. We have
\begin{equation}
\begin{split}
\label{upper bound for rhs 1}
&v''_{\epsilon}(\psi_0)|f|^2_{p_2^*(h_{m'})} e^{-u(-v_{\epsilon}(\psi_0))-\psi_0}\\
\le &\sup_{M_j\times D}\left(e^{-u(-v_{\epsilon}(\psi_0))+t_0+B-\epsilon}\right)\frac{2}{B}\mathbb{I}_{\{-t_0-B+\epsilon<\psi_0<-t_0-\epsilon\}} |f|_{p_2^*(h)}^2.
\end{split}
\end{equation}

It follows from Lebesgue's dominated convergence theorem that
\begin{equation}\nonumber
\begin{split}
&\lim_{m'\to+\infty}\int_{M_j\times D}
v''_{\epsilon}(\psi_0)|f|^2_{p_2^*(h_{m'})} e^{-u(-v_{\epsilon}(\psi_0))-\tilde{\Psi}_m-\psi_0}\\
=&\int_{M_j\times D}v''_{\epsilon}(\psi_0)|f|^2_{p_2^*(h)} e^{ -u(-v_{\epsilon}(\psi_0))-\tilde{\Psi}_m-\psi_0}<+\infty,
\end{split}
\end{equation}
since $\tilde{\Psi}_m$ is bounded and $\int_{M_j\times D}|f|_{p_2^*(h)}^2<+\infty$.

It follows from $\inf_{M_j\times D}e^{-v_{\epsilon}(\psi_0)-\tilde{\Psi}_m}>0$, inequalities \eqref{estimate 2}, \eqref{upper bound for rhs 1} that
\[\sup_{m'} \int_{M_j\times D}
|u_{m,m',\epsilon,j}|^2_{p_2^*(h_{m'})}e^{-\psi_0}<+\infty.\]
As $|e_x|_{h_{m'}}\le |e_x|_{h_{m'+1}}$ for any $m'\in \mathbb{Z}_{\ge 0}$, for any fixed $i$, we have
\[\sup_{m'} \int_{M_j\times D}
|u_{m,m',\epsilon,j}|^2_{p_2^*(h_i)}e^{-\psi_0}<+\infty.\]
Especially letting $h_i=h_1$, since the closed unit ball of the Hilbert space is
weakly compact, we can extract a subsequence $u_{m,m'',\epsilon,j} $ weakly
convergent to $u_{m,\epsilon,j} $ in $L^2(M_j\times D,K_{\Omega}\otimes E', p_2^*(h_1)e^{-\psi_0})$ as $m''\to+\infty$.
It follows from Lemma \ref{weakly convergence} that $u_{m,m'',\epsilon,j}\sqrt{e^{v_{\epsilon}(\psi_0)-\tilde{\Psi}}}$ weakly converges to $u_{m,\epsilon,j}\sqrt{e^{v_{\epsilon}(\psi_0)-\tilde{\Psi}}}$ in $L^2(M_j\times D,K_{\Omega}\otimes E', p_2^*(h_1)e^{-\psi_0})$ as $m''\to+\infty$.

For fixed $i\in\mathbb{Z}_{\ge 0}$, as $h_1$ and $h_i$ are both $C^2$ smooth hermitian metrics on $M_j\subset\subset X$, we know that the two norms in $L^2(M_j\times D, K_{\Omega}\otimes E', p_2^*(h_1)e^{-\psi_0})$ and $ L^2(M_j\times D, K_{\Omega}\otimes E', p_2^*(h_i)e^{-\psi_0})$ are equivalent. Note that $\sup_{m''} \int_{M_j\times D}
|u_{m,m'',\epsilon,j}|^2_{p_2^*(h_i)}e^{-\psi_0}<+\infty$. Hence we know that $u_{m,m'',\epsilon,j}\sqrt{e^{v_{\epsilon}(\psi_0)-\tilde{\Psi}}}$ also weakly converges to $u_{m,\epsilon,j}\sqrt{e^{v_{\epsilon}(\psi_0)-\tilde{\Psi}}}$ in $L^2(M_j\times D, K_{\Omega}\otimes E', p_2^*(h_i)e^{-\psi_0})$ as $m''\to+\infty$ by Lemma \ref{weaklycon_twonorms}.

Then we have
\begin{equation}\nonumber
\begin{split}
&\int_{M_j\times D}|u_{m,\epsilon,j}|^2_{p_2^*(h_i)}e^{v_{\epsilon}(\psi_0)-\tilde{\Psi}_m-\psi_0}\\
\le& \liminf_{m''\to+\infty}\int_{M_j\times D}|u_{m,m'',\epsilon,j}|^2_{p_2^*(h_i)}e^{v_{\epsilon}(\psi_0)-\tilde{\Psi}_m-\psi_0}\\
\le& \liminf_{m''\to+\infty}\int_{M_j\times D}v''_{\epsilon}(\psi_0)|f|^2_{p_2^*(h_{m''})} e^{u(v_{\epsilon}(\psi_0))-\tilde{\Psi}_m-\psi_0}\\
\le&
\int_{M_j\times D}v''_{\epsilon}(\psi_0)|f|^2_{p_2^*(h)} e^{u(v_{\epsilon}(\psi_0))-\tilde{\Psi}_m-\psi_0}
<+\infty.
\end{split}
\end{equation}
Letting $i\to +\infty$, by monotone convergence theorem, we have
\begin{equation}
\begin{split}
\int_{M_j\times D}|u_{m,\epsilon,j}|^2_{p_2^*(h)}e^{v_{\epsilon}(\psi_0)-\tilde{\Psi}_m-\psi_0}
\le
\int_{M_j\times D}v''_{\epsilon}(\psi_0)|f|^2_{p_2^*(h)}e^{ -u(-v_{\epsilon}(\psi_0))-\tilde{\Psi}_m-\psi_0}
<+\infty.
\label{estimate for uej}
\end{split}
\end{equation}

It follows from $\inf_{M_j\times D}e^{ -u(-v_{\epsilon}(\psi_0))-\tilde{\Psi}_m}>0$, inequalities \eqref{estimate 2}, \eqref{upper bound for rhs 1} that
\[\sup_{m''} \int_{M_j\times D}
|\mathbf{h}_{m,m'',\epsilon,j}|^2_{p_2^*(h_{m''})}e^{-\psi_0}<+\infty.\]
As $|e_x|_{h_{m'}}\le |e_x|_{h_{m'+1}}$ for any $m'\in \mathbb{Z}_{\ge 0}$, we have
\[\sup_{m''} \int_{M_j\times D}
|\mathbf{h}_{m,m'',\epsilon,j}|^2_{p_2^*(h_1)}e^{-\psi_0}<+\infty.\]
Since the closed unit ball of the Hilbert space is
weakly compact, we can extract a subsequence of $\mathbf{h}_{m,m'',\epsilon,j} $ (also denoted by $\mathbf{h}_{m,m'',\epsilon,j} $) weakly
convergent to $\mathbf{h}_{m,\epsilon,j} $ in $L^2(M_j\times D,K_{\Omega}\otimes E', p_2^*(h_1)e^{-\psi_0})$ as $m''\to+\infty$. As $0\leq \hat{\lambda}_{m''}\leq \tilde{\lambda}_1+1$ and $M_j$ is relatively compact in $X$, we have
\[\sup_{m''} \int_{M_j\times D}
N_1\hat{\lambda}_{m''}|\mathbf{h}_{m,m'',\epsilon,j}|^2_{p_2^*(h_{m''})}e^{-\psi_0}<+\infty.\]
It follows from Lemma \ref{weakly converge lemma} that there exists a subsequence of $\{m''\}$ (also denoted by $\{m''\}$, such that $\sqrt{N_1\hat{\lambda}_{m''}} \mathbf{h}_{m,m'',\epsilon,j}$ is weakly convergent to some $\tilde{h}_{m,\epsilon,j}$ and  $P_{m''}\big(\sqrt{N_1\tilde{\lambda}_{m''}}\mathbf{h}_{m,m'',\epsilon,j} \big)$ weakly converges to $P\big(\tilde{h}_{m,\epsilon,j} \big)$ in $L^2(M_j\times D,\wedge^{n+1,1}T^*\Omega\otimes E', p_2^*(h_1)e^{-\psi_0})$.

It follows from  $0\le \hat{\lambda}_{m''}\le\tilde{\lambda}_1+1$, $\tilde{\lambda}_{m'}\to 0$, a.e., $M_j$ is relatively compact in $X$ and Lemma \ref{weakly convergence} that $\sqrt{N_1\tilde{\lambda}_{m''}}\mathbf{h}_{m,m'',\epsilon,j} $ weakly
convergent to $0$ in $L^2(M_j\times D,\wedge^{n+1,1}T^*\Omega\otimes E', p_2^*(h_1)e^{-\psi_0})$. It follows from the uniqueness of weak limit that $\tilde{h}_{m,\epsilon,j}=0$. Then we have $P_{m'}\big(\sqrt{N_1\hat{\lambda}_{m'}}\mathbf{h}_{m,m',\epsilon,j} \big)$ weakly converges to $0=P\big(\tilde{h}_{m,\epsilon,j} \big)$ in $L^2(M_j\times D,\wedge^{n+1,1}T^*\Omega\otimes E', p_2^*(h_1)e^{-\psi_0})$.

Replacing $m'$ by $m''$ in equality \eqref{d-bar realation u,h,lamda 1} and letting $m''$ go to $+\infty$, we have
\begin{equation}
\label{d-bar realation u,h,lamda 3_pre}
D'' u_{m,\epsilon,j}
=D''(1-v'_{\epsilon}(\psi_0))\wedge f,
\end{equation}
which implies that
\begin{equation}
\label{d-bar realation u,h,lamda 3}
D'' u_{m,\epsilon,j}
=D''\big((1-v'_{\epsilon}(\psi_0))f\big)
\end{equation}
on $M_j\times D$.

Denote $F_{m,\epsilon,j}:=-u_{m,\epsilon,j}+(1-v'_{\epsilon}(\psi_0))f$. It follows from equality \eqref{d-bar realation u,h,lamda 3} and inequality \eqref{estimate for uej} that we know $F_{m,\epsilon,j}$ is an $E'$-valued holomorphic $(n+1,0)$ form on $M_j\times D$ and
\begin{equation}
\label{estimate for F lej}
\begin{split}
&\int_{M_j\times D}
|F_{m,\epsilon,j}-(1-v'_{\epsilon}(\psi_0))f|^2_{p_2^*(h)} e^{v_\epsilon(\psi_0)-\tilde{\Psi}_m-\psi_0}\\
\le&\int_{M_j\times D}v''_{\epsilon}(\psi_0)|f|^2_{p_2^*(h)} e^{ -u(-v_{\epsilon}(\psi_0))-\tilde{\Psi}_m-\psi_0}<+\infty.
\end{split}
\end{equation}

\

\emph{Step 5: Letting $\epsilon\to 0$. }

Note that $\sup_{\epsilon}\sup_{M_j}(e^{-u(-v_{\epsilon}(\psi_0))-\tilde{\Psi}_m})<+\infty$,
$0\le
v_{\epsilon}{''}(t) \le \frac{2}{B}\mathbb{I}_{(-t_0-B+\epsilon,-t_0-\epsilon)}$. We have
\begin{equation}
	\label{upper bound for rhs 2 in L2 method}
	\begin{split}
	&v''_{\epsilon}(\psi_0)|f|^2_{p_2^*(h)} e^{ -u(-v_{\epsilon}(\psi_0))-\tilde{\Psi}_m-\psi_0}\\
	\le &\sup_{\epsilon}\sup_{M_j\times D}\left(e^{-u(-v_{\epsilon}(\psi_0))-\tilde{\Psi}_m}\right) \frac{2}{B}\mathbb{I}_{\{-t_0-B<\psi_0<-t_0\}} |f|_{p_2^*(h)}^2e^{-\psi_0}.
	\end{split}
\end{equation}
It follows from dominated convergence theorem that
\begin{equation}\nonumber
	\begin{split}
		&\lim_{\epsilon\to 0}\int_{M_j\times D}
		v''_{\epsilon}(\psi_0)|f|^2_{p_2^*(h)} e^{ -u(-v_{\epsilon}(\psi_0))-\tilde{\Psi}_m-\psi_0}\\
		=&\int_{M_j\times D}v_{\epsilon}''(\psi_0)|f|^2_{p_2^*(h)} e^{-u(-v(\psi_0))-\tilde{\Psi}_m-\psi_0}\\
		\le&\bigg(\sup_{M_j\times D}e^{-u(-v(\psi_0))}\bigg)\int_{M_j\times D}\frac{1}{B}\mathbb{I}_{\{-t_0-B<\psi_0<-t_0\}} |f|^2_{p_2^*(h)}e^{-\tilde{\Psi}_m-\psi_0}.
	\end{split}
\end{equation}

Combining with \[\inf_{\epsilon}\inf_{M_j\times D}e^{v_\epsilon(\psi_0)-\tilde{\Psi}_m-\psi_0}>0,\]
we have
\[\sup_{\epsilon}\int_{M_j\times D}|F_{m,\epsilon,j}-(1-v'_{\epsilon}(\psi_0))f|^2_{p_2^*(h)}<+\infty.\]
Note that
\begin{equation}\nonumber
	\begin{split}
		\sup_{\epsilon}\int_{M_j\times D}|(1-v'_{\epsilon}(\psi_0))f|_{p_2^*(h)}^2
		\le \int_{M_j\times D}|f|_{p_2^*(h)}^2<+\infty,
	\end{split}
\end{equation}
which implies that $\sup_{\epsilon}\int_{M_j\times D}|F_{m,\epsilon,j}|_{p_2^*(h)}^2<+\infty$.

Especially, we know
$\sup_{\epsilon}\int_{M_j\times D}|F_{m,\epsilon,j}|_{p_2^*(h_1)}^2<+\infty$. Note that $h_1$ is a $C^2$ hermitian metric on $E$, $M_j\subset\subset M$ and $F_{m,\epsilon,j}$ is an $E'$-valued holomorphic $(n+1,0)$ form on $M_j\times D$. Then there exists a subsequence of $\{F_{m,\epsilon,j}\}_{\epsilon}$ (also denoted by $\{F_{m,\epsilon,j}\}_{\epsilon}$) compactly convergent to an $E'$-valued holomorphic $(n+1,0)$ form $F_{m,j}$ on $M_j\times D$.

Then it follows from Fatou's lemma that we have
\begin{equation}
	\label{step epsilon to 0 formula 3}
	\begin{split}
		&\int_{M_j\times D}|F_{m,j}-(1-b(\psi_0))f|_{p_2^*(h)}^2e^{v(\psi_0)-\psi_0-\tilde{\Psi}_m}\\
		\le&\liminf_{\epsilon\to 0}\int_{M_j\times D}|F_{m,\epsilon,j}-(1-v'_{\epsilon}(\psi_0))f|_{p_2^*(h)}^2e^{v_{\epsilon}(\psi_0)-\tilde{\Psi}_m-\psi_0}\\
		\le&\limsup_{\epsilon\to 0}\int_{M_j\times D}|F_{m,\epsilon,j}-(1-v'_{\epsilon}(\psi_0))f|_{p_2^*(h)}^2e^{v_{\epsilon}(\psi_0)-\tilde{\Psi}_m-\psi_0}\\
		\le&\limsup_{\epsilon\to 0}\int_{M_j\times D}v_{\epsilon}''(\psi_0)|f|^2_{p_2^*(h)} e^{ -u(-v_{\epsilon}(\psi_0))-\tilde{\Psi}_m-\psi_0}\\
		\le&\left(\sup_{M_j\times D}e^{-u(-v(\psi_0))}\right)\int_{M_j\times D}\frac{1}{B}\mathbb{I}_{\{-t_0-B<\psi_0<-t_0\}}|f|^2_{p_2^*(h)}e^{-\tilde{\Psi}_m-\psi_0},
	\end{split}
\end{equation}
where $b(t)=b_{t_0}(t)=\frac{1}{B}\int^{t}_{-\infty}\mathbb{I}_{\{-t_0-B< s < -t_0\}}ds$,
$v(t)=v_{t_0}(t)=\int^{t}_{-t_0}b_{t_0}(s)ds-t_0$.

\

\emph{Step 6: ODE System.}

Now we want to find $\eta$ and $\phi$ such that
$(\eta+g^{-1})=e^{-v_\epsilon(\psi_0)}e^{-\phi}$.
As $\eta=s(-v_{\epsilon}(\psi_0))$ and $\phi=u(-v_{\epsilon}(\psi_0))$, we have
$(\eta+g^{-1})e^{v_\epsilon(\psi_0)}e^{\phi}=\left((s+\frac{s'^2}{u''s-s''})e^{-t}e^u\right)\circ(-v_\epsilon(\psi_0))$.\\

Summarizing the above discussion about $s$ and $u$, we are naturally led to a system of
ODEs:
\begin{equation}
	\begin{split}
		&1). \ \left(s+\frac{s'^2}{u''s-s''}\right)e^{u-t}=1,\\
		&2). \  s'-su'=1,
	\end{split}
	\label{ODE SYSTEM}
\end{equation}
where $t\in (0,+\infty)$.

We  solve the ODE system \eqref{ODE SYSTEM} and get
\[u(t)=-\log(1-e^{-t}), \ s(t)=\frac{t}{1-e^{-t}}-1.\]

It follows that $s\in C^{\infty}(0,+\infty)$ satisfies
$s>0$ and $u\in C^{\infty}(0,+\infty)$ satisfies
$u''s-s''>0$.

As $u(t)=-\log(1-e^{-t})$ is decreasing with respect to $t$, then
it follows from $0\geq v(t) \geq \max\{t,-t_0-B\} \geq -t_0-B$, for any $t<0$
that
\begin{equation}
	\begin{split}
		\sup\limits_{M\times D}e^{-u(-v(\psi_0))} \le
		\sup\limits_{t\in(0,t_0+1]}e^{-u(t)}=1-e^{-t_0-B}.
	\end{split}
\end{equation}

Combining with inequality \eqref{step epsilon to 0 formula 3}, we have

\begin{equation}\label{tildeFonXD}
	\begin{split}
		&\int_{M_j\times D}|F_{m,j}-(1-b(\psi_0))f|_{p_2^*(h)}^2e^{v(\psi_0)-\psi_0-\tilde{\Psi}_m}\\
		\le &\left(1-e^{-t_0-B}\right)\int_{M_j\times D}\frac{1}{B}\mathbb{I}_{\{-t_0-B<\psi_0<-t_0\}}|f|^2_{p_2^*(h)}e^{-\tilde{\Psi}_m}<+\infty.
	\end{split}
\end{equation}

\

\emph{Step 7: Letting $t_0\to +\infty$.}

According to inequality (\ref{tildeFonXD}), for any $t_0>0$, there exists some $F_{m,j,t_0}$ such that
\begin{equation}\label{tildeFt_0B1}
	\begin{split}
		&\int_{M_j\times D}|F_{m,j,t_0}-(1-b_{t_0}(\psi_0))f|_{p_2^*(h)}^2e^{v_{t_0}(\psi_0)-\psi_0-\tilde{\Psi}_m}\\
		\le &\int_{M_j\times D}\mathbb{I}_{\{-t_0-1<\psi_0<-t_0\}}|f|^2_{p_2^*(h)}e^{-\tilde{\Psi}_m-\psi_0}.
	\end{split}
\end{equation}
Here we let $B=1$, $b_{t_0}(t):=b_{t_0,1}(t)$, and $v_{t_0}(t):=v_{t_0,1}(t)$.

By direct calculation under the local case,  we can get that
\begin{equation}\label{RHSt0toinfty}
	\begin{split}
    &\lim_{t_0\to\infty}\int_{M_j\times D}\mathbb{I}_{\{-t_0-1<\psi_0<-t_0\}}|f|^2_{p_2^*(h)}e^{-\tilde{\Psi}_m-\psi_0}\\
    =&\int_{M_j\times \{w_0\}}|u|^2_{h}e^{-\tilde{\Psi}_m|_{M\times\{w_0\}}}\\
    &\cdot\lim_{t_0\to\infty}\int_D\mathbb{I}_{\{-t_0-1<2\log|w-w_0|-2\log r<-t_0\}}e^{-2\log|w-w_0|+2\log r}d\lambda_D\\
    \le&\pi r^2\int_M|u|^2_{h}e^{-\tilde{\Psi}_{w_0}}<+\infty,
	\end{split}
\end{equation}
since $\tilde{\Psi}_m$ is smooth and $\tilde{\Psi}_m\geq\tilde{\Psi}$, where $d\lambda_D$ is the Lebesgue measure on $D$. 

Note that $\psi_0=p_1^*(2\log|w-w_0|-2\log r)$, $\tilde{\Psi}_m$ is bounded on any $M_j\times D$, $h\ge h_{j,1}$ where $h_{j,1}$ is smooth on $M_j$, and $F_{m,j,t_0}$, $f$ are holomorphic on $M_j\times D$. Considering that $e^{-\psi_0}$ is not integrable near $M\times \{w_0\}$, we can find that $F_{m,j,t_0}|_{M_j\times\{w_0\}}=u\wedge dw$ by inequality (\ref{tildeFt_0B1}).

Inequality (\ref{tildeFt_0B1}) and inequality (\ref{RHSt0toinfty}) also imply that
\begin{equation}\label{tildeFt_0B1.5}
		\limsup_{t_0\to+\infty}\int_{M_j\times D}|F_{m,j,t_0}-(1-b_{t_0}(\psi_0))f|_{p_2^*(h)}^2e^{v_{t_0}(\psi_0)-\tilde{\Psi}_m-\psi_0}< +\infty.
\end{equation}

Note that $v_{t_0}(\psi_0)\geq \psi_0$, then we have
\begin{equation}\label{tildeFt_0B2}
		\limsup_{t_0\to\infty}\int_{M_j\times D}|F_{m,j,t_0}-(1-b_{t_0}(\psi_0))f|_{p_2^*(h)}^2e^{-\tilde{\Psi}_m}\le\pi r^2\int_M|u|^2_{h}e^{-\tilde{\Psi}_{w_0}}<+\infty.
\end{equation}
In addition, we have
\begin{equation}\nonumber
    |(1-b_{t_0}(\psi_0))f|^2_{p_2^*(h)}e^{-\tilde{\Psi}_m}\leq|f|^2_{p_2^*(h)}e^{-\tilde{\Psi}_m},
\end{equation}
where $\int_{M_j\times D}|f|^2_{p_2^*(h)}e^{-\tilde{\Psi}_m}<+\infty$ since $\tilde{\Psi}_m$ is bounded on $M_j\times D$. Then according to Lebesgue's dominated convergence theorem, we get
\begin{equation}\label{1-bto0}
    \lim_{t_0\to+\infty}\int_{M_j\times D}|(1-b_{t_0}(\psi_0))f|^2_{p_2^*(h)}e^{-\tilde{\Psi}_m}=0.
\end{equation}
Combining inequalities (\ref{tildeFt_0B2}) and (\ref{1-bto0}), we know $\int_{M_j\times D}|F_{m,j,t_0}|^2_{p_2^*(h)}$ is uniformly upper bounded with respect to $t_0\in (0,+\infty)$. Note that $h$ is locally lower bounded, thus we can find a subsequence of $\{F_{m,j,t_0}\}$ (also denoted by $\{F_{m,j,t_0}\}$ itself) compactly convergent to $\tilde{F}_{m,j}$, where $\tilde{F}_{m,j}$ is an $E'-$valued holomorphic $(n+1,0)$ form on $M_j\times D$. Then following from Fatou's lemma, inequalities (\ref{tildeFt_0B2}) and (\ref{1-bto0}), we have
\begin{equation}\label{tildeFwitht0to+infty}
	\begin{split}
    &\int_{M_j\times D}|\tilde{F}_{m,j}|^2_{p_2^*(h)}e^{-\tilde{\Psi}_m}\\
    \le &\liminf_{t_0\to +\infty}\int_{M_j\times D}|F_{m,j,t_0}|^2_{p_2^*(h)}e^{-\tilde{\Psi}_m}\\
    \le &\limsup_{t_0\to\infty}\int_{M_j\times D}|F_{m,j,t_0}-(1-b_{t_0}(\psi_0))f|_{p_2^*(h)}^2e^{-\tilde{\Psi}_m}\\
    \le &\pi r^2\int_M|u|^2_{h}e^{-\tilde{\Psi}_{w_0}}<+\infty.
	\end{split}
\end{equation}
In addition, we have $\tilde{F}_{m,j}|_{M_j\times\{w_0\}}=u\wedge dw$, since $F_{m,j,t_0}|_{M_j\times\{w_0\}}=u\wedge dw$ for any $t_0$.

\

\emph{Step 8: Letting $m\to +\infty$.}

Since $\tilde{\Psi}_m$ is uniformly bounded on any $M_j\times D$ for any fixed $j$, we know that $\int_{M_j\times D}|\tilde{F}_{m,j}|^2_{p_2^*(h)}$ is uniformly bounded with respect to $m$ by inequality (\ref{tildeFwitht0to+infty}). Note that $h$ is locally lower bounded, thus we can find a subsequence of $\{\tilde{F}_{m,j}\}$ (also denoted by $\{\tilde{F}_{m,j}\}$ itself) compactly convergent to $\tilde{F}_{j}$, where $\tilde{F}_{j}$ is an $E'-$valued holomorphic $(n+1,0)$ form on $M_j\times D$. According to Fatou's Lemma, we have
\begin{equation}\label{tildeFwithmto+infty}
	\begin{split}
    &\int_{M_j\times D}|\tilde{F}_{j}|^2_{p_2^*(h)}e^{-\tilde{\Psi}}\\
    \le &\liminf_{m\to +\infty}\int_{M_j\times D}|\tilde{F}_{m,j}|^2_{p_2^*(h)}e^{-\tilde{\Psi}_m}\\
    \le &\pi r^2\int_M|u|^2_{h}e^{-\tilde{\Psi}_{w_0}}<+\infty.
	\end{split}
\end{equation}
Additionally, we have $\tilde{F}_{j}|_{M_j\times\{w_0\}}=u\wedge dw$.

\

\emph{Step 9: Letting $j\to +\infty$.}

Since $\tilde{\Psi}$ is bounded on $\Omega=M\times D$, we have $\int_{M_j\times D}|\tilde{F}_{j}|^2_{p_2^*(h)}$ is uniformly bounded with respect to $j$ by inequality (\ref{tildeFwithmto+infty}). Note that $h$ is locally lower bounded, thus by diagonal method we can find a subsequence of $\{\tilde{F}_{j}\}$ (also denoted by $\{\tilde{F}_{j}\}$ itself) convergent to $\tilde{u}$ on any $M_j\times D$, where $\tilde{u}$ is an $E'-$valued holomorphic $(n+1,0)$ form on $\Omega$. Then it follows from Fatou's Lemma that
\begin{equation}\label{tildeFwithjto+infty}
	\begin{split}
    &\int_{M\times D}|\tilde{u}|^2_{p_2^*(h)}e^{-\tilde{\Psi}}\\
    \le &\liminf_{j\to +\infty}\int_{M_j\times D}|\tilde{F}_{j}|^2_{p_2^*(h)}e^{-\tilde{\Psi}}\\
    \le &\pi r^2\int_M|u|^2_{h}e^{-\tilde{\Psi}_{w_0}}.
	\end{split}
\end{equation}
According to $\tilde{F}_{j}|_{M_j\times\{w_0\}}=u\wedge dw$, we also have $\tilde{u}|_{M_j\times\{w_0\}}=u\wedge dw$, which $\tilde{u}$ is actually the extension of $u$ what we need.

Then the proof of Lemma \ref{L2ext} is done.

\vspace{.1in} {\em Acknowledgements}. We would like to thank Dr. Zhitong Mi and Zheng Yuan for checking this paper. The second author was supported by National Key R\&D Program of China 2021YFA1003100, NSFC-11825101, NSFC-11522101 and NSFC-11431013.

\bibliographystyle{references}
\bibliography{xbib}

\end{document}